\def\version{last updated 08/11/2022 -- version 4
\hfill\href{https://arxiv.org/abs/2003.12003}{arXiv:2003.12003}
}
\renewcommand{\thefootnote}{\fnsymbol{footnote}}
\long\def\symbolfootnote[#1]#2{\begingroup%
\def\thefootnote{\fnsymbol{footnote}}\footnote[#1]{#2}\endgroup}
\newtheorem{thm}{Theorem}[section]
\newtheorem{lem}[thm]{Lemma}
\newtheorem{prop}[thm]{Proposition}
\newtheorem{cor}[thm]{Corollary}
\theoremstyle{definition}
\newtheorem{rem}[thm]{Remark}
\newtheorem*{rem*}{Remark}
\newtheorem{examp}[thm]{Example}
\numberwithin{equation}{section}
\def\ds{\displaystyle}
\def\:{\colon}
\def\.{\cdot}
\def\<{\left\langle}
\def\>{\right\rangle}
\def\({\left(}
\def\){\right)}
\def\ph#1{\phantom{#1}}
\def\epsilon{\varepsilon}
\def\phi{\varphi}
\def\leq{\leqslant}
\def\geq{\geqslant}
\def\la{\leftarrow}
\def\Lra{\Longrightarrow}
\def\bar#1{\overline{#1}}
\def\tilde#1{\widetilde{#1}}
\def\iso{\cong}
\DeclareMathOperator{\Id}{Id}
\DeclareMathOperator{\im}{im}
\def\F{\mathbb{F}}
\def\k{\Bbbk}
\def\RP{\mathbb{R}\mathrm{P}}
\def\Z{\mathbb{Z}}
\DeclareMathOperator{\diag}{diag}
\DeclareMathOperator{\Ext}{Ext}
\DeclareMathOperator{\Hom}{Hom}
\DeclareMathOperator{\Pic}{Pic}
\DeclareMathOperator{\Tor}{Tor}
\def\SO{\mathrm{SO}}
\def\Sp{\mathrm{Sp}}
\def\Spin{\mathrm{Spin}}
\def\SU{\mathrm{SU}}
\def\String{\mathrm{String}}
\def\bo{\mathrm{bo}}
\def\kO{{k\mathrm{O}}}
\def\kU{{k\mathrm{U}}}
\def\tmf{{\mathrm{tmf}}}
\DeclareMathOperator{\ann}{ann}
\DeclareMathOperator{\Sq}{Sq}
\def\dlQ{\mathrm{Q}}
\def\QS0{\dlQ S^0}
\def\QSo0{\dlQ_0S^0}
\def\StA{\mathcal{A}}
\def\StE{\mathcal{E}}
\def\J{\mathcal{J}}
\def\StMod{\mathbf{StMod}}
\def\op{\circ}
\def\sint{{\smallint}}
\DeclareMathOperator{\Adj}{Adj}
\def\Sage{\texttt{Sage}}
\title
[Homotopy theory over a commutative $S$-algebra:
some tools]
{Homotopy theory of modules over a commutative
$S$-algebra: some tools and examples}
\author{Andrew Baker}
\date{\version}
\address{
School of Mathematics \& Statistics,
University of Glasgow, Glasgow G12~8QQ, Scotland.}
\email{a.baker@maths.gla.ac.uk}
\urladdr{http://www.maths.gla.ac.uk/$\sim$ajb}
\thanks{
I would like to thank the following for
helpful comments and insights: Bob Bruner
and John Rognes from whom I learnt an
enormous amount about working with the
Steenrod algebra; Peter Eccles who taught
me how to use Toda brackets; Ken Brown
who helped me fine-tune an algebraic
result; Mike Hill who drew my attention
to~\cite{CLD-AGH-MAH:HomObstrStrOtns}. \\
The mathematics described in this paper
is based in part on work carried out while
the author was supported by the following
organisations:
the National Science Foundation under
Grant No.~0932078~000 while the author
was in residence at the Mathematical
Sciences Research Institute in Berkeley
California during the Spring~2014 semester;
Kungliga Tekniska H\"ogskolan and Stockholms
Universitet in Spring of 2018;
the Isaac Newton Institute for Mathematical
Sciences for support and hospitality during
the programme \emph{Homotopy Harnessing
Higher Structures} (supported by EPSRC
grant number EP/R014604/1);
the Max Planck Institute for Mathematics
in Bonn during a visit in January~2020.
}
\keywords{Stable homotopy theory, Steenrod algebra}
\subjclass[2010]{Primary 55P42; Secondary 55P43, 55S10, 55S20}
\begin{document}

\begin{abstract}
Modern categories of spectra such as that
of Elmendorf et al equipped with strictly
symmetric monoidal smash products allow
the introduction of symmetric monoids
providing a new way to study highly
coherent commutative ring spectra. These
have categories of modules which are
generalisations of the classical categories
of spectra that correspond to modules
over the sphere spectrum; passing to
their derived or homotopy categories
leads to new contexts in which homotopy
theory can be explored.

In this paper we describe some of the
tools available for studying these
`brave new homotopy theories' and
demonstrate them by considering
modules over the connective
$K$-theory spectrum, closely related
to Mahowald's theory of $bo$-resolutions.

In a planned sequel we will apply
these techniques to the much less
familiar context of modules over
the $2$-local connective spectrum
of topological modular forms.
\end{abstract}

\maketitle

\tableofcontents

\section*{Introduction}

Modern categories of spectra such as that
of Elmendorf et al~\cite{EKMM} equipped
with strictly symmetric monoidal smash
products allow for the introduction of
symmetric monoids giving a new way to
study highly coherent commutative ring
spectra. In turn these have categories
of modules which are generalisations
of the classical categories of spectra
(corresponding to modules over the sphere
spectrum). For example, such categories
have Quillen model structures and so homotopy
(or derived) categories, thus allowing
the study of `brave new homotopy theories'.
This paper provides an introduction to
some of the machinery available for
engaging in this version of homotopy
theory and Sections~\ref{sec:HtpyRModules},
\ref{sec:CellModules} and \ref{sec:duality}
provide an overview on homotopy theory
for $R$-modules over a commutative
$S$-algebra~$R$ which should be sufficient
for reading the present work. Although
we only discuss connective spectra,
many aspects also apply to non-connective
settings with suitable modifications.

As an example we consider the important
case of~$\kO$ (the $2$-local connective
real $K$-theory spectrum) which is related
to Mahowald's theory of $\bo$-resolutions
and we review some aspects of this from
the present perspective. The case of~$\tmf$
(the $2$-local connective spectrum of
topological modular forms) is largely
waiting to be developed in the spirit
of Mahowald's work and in the planned
sequel we will discuss this, focusing
especially on examples associated
with~$\kO$ considered as a $\tmf$-module.
Throughout our aim will be to exhibit
interesting phenomena with connections
to classical homotopy theory.

Since we use make use of modules over
the mod~$2$ Steenrod algebra $\StA^*$
and its finite subHopf algebras $\StA(n)^*$,
we give some algebraic background in
Section~\ref{app:Steenrodmodules}.

\subsection*{Conventions \& notations}
In this paper we will mainly work locally
at the prime~$2$, so in that context~$H$
will denote the mod~$2$ Eilenberg-Mac~Lane
spectrum $H\F_2$ and~$\StA^*$ the mod~$2$
Steenrod algebra.

To avoid excessive display of gradings we
will usually suppress cohomological degrees
and write~$V$ for a cohomologically graded
vector space~$V^*$; in particular we will
often write~$\StA$ for the Steenrod algebra.
The linear dual of~$V$ is $\mathrm{D}V$
where
$(\mathrm{D}V)^k = \Hom_{\F_2}(V^{-k},\F_2)$,
and we write $V[m]$ for graded vector space
with $(V[m])^k=V^{k-m}$, so for the cohomology
of a spectrum~$X$, $H^*(\Sigma^m X)=H^*(X)[m]$.
For a connected graded algebra $\mathcal{B}^*$
we will often just write $\mathcal{B}$, and
denote its positive degree part by $\mathcal{B}^+$.

When working with left modules over a Hopf
algebra~$\mathcal{B}$ over a field we will
write $M\odot N$ to denote the vector space
tensor product of two~$\mathcal{B}$-modules~$M$
and~$N$ equipped with the diagonal action
defined using the coproduct in $\mathcal{B}$;
for a vector space~$V$, $M\otimes V$ will
denote the left module with action obtained
from the action on~$M$. The product~$\odot$
defines the monoidal structure on the
category of left~$\mathcal{B}$-modules.
It is well known that
$\mathcal{B}\odot N\iso\mathcal{B}\otimes N$
as left $\mathcal{B}$-modules, so~$\odot$
descends to the stable module category
$\StMod_{\mathcal{B}}$.

When discussing modules we will often follow
well established precedent and use diagrams
such as those in the figure below
which both represent~$\StA(1)$ as a left
$\StA(1)$-module. We will usually interpret
degrees as cohomological so that Steenrod
actions are displayed pointing upwards and
we usually suppress arrow heads; labels on
vertices denote degrees but are often omitted.
\begin{equation*}
\begin{tikzpicture}
\Vertex[x=1,y=6,color=black,size=.05,label={6},position=right,distance=0.5mm]{A6}
\Vertex[x=1,y=5,color=black,size=.1,label={5},position=left,distance=0.5mm]{A5}
\Vertex[x=1,y=4,size=.1,color=black,label={4},position=right,distance=0.5mm]{A4}
\Vertex[y=3,size=.1,color=black,label={3},position=left,distance=0.5mm]{A31}
\Vertex[x=1,y=3,size=.1,color=black,label={3},position=right,distance=0.5mm]{A32}
\Vertex[y=2,size=.1,color=black,label={2},position=left,distance=0.5mm]{A2}
\Vertex[y=1,size=.1,color=black,label={1},position=right,distance=0.5mm]{A1}
\Vertex[y=0,size=.1,color=black,label={0},position=right,distance=0.5mm]{A0}
\Edge[lw=0.75pt,Direct](A5)(A6)
\Edge[lw=0.75pt,bend=-45,Direct](A4)(A6)
\Edge[lw=0.75pt,Direct](A32)(A4)
\Edge[lw=0.75pt,Direct](A31)(A5)
\Edge[lw=0.75pt,Direct](A2)(A31)
\Edge[lw=0.75pt,Direct](A2)(A4)
\Edge[lw=0.75pt,Direct](A1)(A32)
\Edge[lw=0.75pt,label={$\Sq^1$},position=right,Direct](A0)(A1)
\Edge[lw=0.75pt,label={$\Sq^2$},position=left,bend=45,Direct](A0)(A2)
\end{tikzpicture}
\qquad\qquad
\begin{tikzpicture}
\Vertex[x=1,y=6,color=black,size=.05,label={6},position=right,distance=0.5mm]{A6}
\Vertex[x=1,y=5,color=black,size=.1,label={5},position=left,distance=0.5mm]{A5}
\Vertex[x=1,y=4,size=.1,color=black,label={4},position=right,distance=0.5mm]{A4}
\Vertex[y=3,size=.1,color=black,label={3},position=left,distance=0.5mm]{A31}
\Vertex[x=1,y=3,size=.1,color=black,label={3},position=right,distance=0.5mm]{A32}
\Vertex[y=2,size=.1,color=black,label={2},position=left,distance=0.5mm]{A2}
\Vertex[y=1,size=.1,color=black,label={1},position=right,distance=0.5mm]{A1}
\Vertex[y=0,size=.1,color=black,label={0},position=right,distance=0.5mm]{A0}
\Edge[lw=0.75pt](A5)(A6)
\Edge[lw=0.75pt,bend=-45](A4)(A6)
\Edge[lw=0.75pt](A32)(A4)
\Edge[lw=0.75pt](A31)(A5)
\Edge[lw=0.75pt](A2)(A31)
\Edge[lw=0.75pt](A2)(A4)
\Edge[lw=0.75pt](A1)(A32)
\Edge[lw=0.75pt,label={$\Sq^1$},position=right](A0)(A1)
\Edge[lw=0.75pt,label={$\Sq^2$},position=left,bend=45](A0)(A2)
\end{tikzpicture}
\end{equation*}

\section{Homotopy theory of $R$-modules}\label{sec:HtpyRModules}

We adopt the terminology and notation of~\cite{EKMM}.
Initially we do not necessarily assume spectra
are localised (or completed) at some prime
although this possibility would not affect
the generalities described here. Later we
do focus on some specifically local aspects
in order to incorporate notions from~\cite{AJB&JPM}.

Let $R$ be a connective commutative $S$-algebra.
In the category of $R$-modules $\mathscr{M}_R$
let $S^n_R$ denote the functorial cofibrant
replacement of the suspension $\Sigma^nR$.
When $R=S$ we will often suppress~$S$ from
notation and for an $R$-module~$M$ also
write~$M$ for the underlying $S$-module.

A morphism $f\colon X\to Y$ in
$\mathscr{M}=\mathscr{M}_S$ gives rise
to a morphism $R\wedge X\to R\wedge Y$
in $\mathscr{M}_R$, namely $1_r\wedge f$.
If~$M$ is an $R$-module, a morphism
$g\colon X\to M$ in $\mathscr{M}$ gives
rise to a morphism $R\wedge X\to M$.
\[
\xymatrix{
R\wedge X\ar[r]^{1_r\wedge g}\ar@/_15pt/[rr]
& R\wedge M\ar[r] & M \\
&&
}
\]

For an $R$-module~$M$,
\[
\pi_n(M) \iso \mathscr{D}_S(S^n,M)
         \iso \mathscr{D}_R(S^n_R,M).
\]

Now suppose that $R$ admits a morphism
of commutative $S$-algebras $R\to H=H\F_p$;
for example, we might have $\pi_0R=\Z$
or $\pi_0R=\Z_{(p)}$. Then we can define
a homology theory $H^R_*(-)$ on $\mathscr{D}_R$
by setting
\[
H^R_*M = \pi_*(H\wedge_RM)
\]
when $M$ is cofibrant. This theory has
a dual cohomology theory $H_R^*(-)$
defined by
\[
H_R^*M = \mathscr{D}_R(M,H).
\]
and which satisfies strict duality
\[
H_R^nM \iso \Hom_{\F_p}(H^R_nM,\F_p).
\]

One approach to calculating is by using the
K\"unneth spectral sequence of~\cite{EKMM},
\begin{equation}\label{eq:KunnethSS}
\mathrm{E}^2_{s,t} = \Tor^{H_*R}_{s,t}(\F_p,H_*M)
\Lra H^R_*M
\end{equation}
which results from the isomorphism
of $H$-modules
\[
H\wedge_R M \iso H\wedge_{H\wedge R}(H\wedge M).
\]
Taking $M=H$ we obtain a spectral
sequence
\[
\Tor^{H_*R}_{s,t}(\F_p,H_*H)
 \Lra H^R_*H
\]
which is known to be multiplicative.
In a case where the induced homomorphism
$H_*R\to H_*H=\StA_*$ is a monomorphism,
this is especially useful. For $p=2$,
each of the cases
$R=H\Z,\kO,\kU,\tmf,\tmf_1(3)$ has this
property. In such cases the dual Steenrod
algebra $\StA_*$ is a free module over
$H_*R$ and we obtain
\[
H^R_*H \iso \StA_*\otimes_{H_*R}\F_p
= \StA_*/\!/H_*R.
\]
Dually we have
\[
H_R^*H = \mathscr{D}_R(H,H)
\]
and a spectral sequence
\[
\mathrm{E}_2^{s,t} =
\Ext_{H_*R}^{s,t}(H_*H,\F_p)
 \Lra H_R^*H.
\]
In the situation where $\StA(p)_*$
is a free module over $H_*R$, this
gives
\[
H_R^nH \iso \Hom_{\F_p}(H^R_nH,\F_p).
\]
We also have
\[
H^*R \iso \StA(p)^*\otimes_{H_R^*H}\F_p.
\]

As in the case $R=S$, $H_R^*H$ is a
cocommutative Hopf algebra and its
dual is a commutative Hopf algebra.
Furthermore, $H_R^*H$ has a natural
left coaction on $H^R_*M$ which
induces a right action of $H^R_*H$
on $H_R^*M$ and a left action on
$H_R^*M$. Using the algebra
isomorphism
\[
\StA\xrightarrow{\;\iso\;}\StA^\op;
\quad
\theta\leftrightarrow(\chi\theta)^\op
\]
we can convert $H^R_*M$ into a left
$H_R^*H$-module with the grading
convention that $H^R_nM$ is in
cohomological degree~$-n$ (so positive
degree elements of $H_R^*H$ act on
$H^R_*M$ by lowering degrees).

By using a geometric resolution of
$\F_p$ over $H_*R$ to compute the
$\mathrm{E}^2$-term, it can be shown
that~\eqref{eq:KunnethSS} is a spectral
sequence of left $H_R^*H$-comodules
and right $H_R^*H$-modules.

In~\cite{AB&AL:ASS} we showed that
there is an Adams spectral sequence
for computing $\mathscr{D}_R(L,M)$
for $R$-modules $L,M$ with $L$
strongly dualisable. Dualising from
$H_R^*H$-comodules to $H_R^*H$-modules
this has the form
\begin{equation}\label{eq:ASS-homology}
\mathrm{E}_2^{s,t}(L,M)
= \Ext_{H_R^*H}^{s,t}(H^R_*L,H^R_*M)
\Lra \mathscr{D}_R(\Sigma^{t-s}L,M)\sphat,
\end{equation}
where $(-)\sphat$ denotes $p$-adic completion.
When $L=S^0_R$ we will set
$\mathrm{E}_r^{*,*}(M)=\mathrm{E}_r^{*,*}(L,M)$.
Assuming appropriate finiteness conditions
this $\mathrm{E}_2$-term can be rewritten
to give
\begin{equation}\label{eq:ASS-cohomology}
\mathrm{E}_2^{s,t} =
\Ext_{H_R^*H}^{s,t}(H_R^*M,H_R^*L)
\Lra \mathscr{D}_R(\Sigma^{t-s}L,M)\sphat.
\end{equation}
When $M=R\wedge Z$, this gives
\[
\mathrm{E}_2^{s,t}(L,R\wedge Z)
= \Ext_{H_R^*H}^{s,t}(H^R_*L,H_*Z)
\iso
\Ext_{H_R^*H}^{s,t}(H^*Z,H_R^*L).
\]
Finally, if $L=S^0_R$,
\[
\mathrm{E}_2^{s,t}(R\wedge Z)
= \Ext_{H_R^*H}^{s,t}(\F_p,H_*Z)
\iso
\Ext_{H_R^*H}^{s,t}(H^*Z,\F_p).
\]
This agrees with the classical Adams
$\mathrm{E}_2$-term
\[
\Ext_{\StA^*}^{s,t}(H^*(R\wedge Z),\F_p)
\Lra \pi_{t-s}(R\wedge Z)
\iso \mathscr{D}_R(S^{t-s}_R,R\wedge Z).
\]

Notice that $\mathrm{E}_r^{*,*}(R)$ is a bi-graded
commutative algebra and for any $R$-module~$M$,
$\mathrm{E}_r^{*,*}(M)$ is a spectral sequence
over it. When $A$ is an $R$-ring spectrum,
$\mathrm{E}_r^{*,*}(A)$ is a spectral sequence
of $\mathrm{E}_r^{*,*}(R)$-algebras.

For examples such as $R=\kO,\kU,\tmf,\tmf_1(3)$
with $p=2$, we know that $H_*R$ is isomorphic
to a left $\StA_*=H_*H$-comodule algebra and
then $H_R^*H$ is a subHopf algebra of the
Steenrod algebra $\StA=H^*H$. In these cases
$\StA$ is a free $H_R^*H$-module.

Now recall that for any ring homomorphism
$A\to B$, left $A$-module $U$ and left
$B$-module~$V$, there is a Cartan-Eilenberg
change of rings spectral sequence of the
form
\[
\mathrm{E}_2^{p,q} =
\Ext_{B}^{q}(\Tor^A_p(B,U),V)
\Lra \Ext_A^{q-p}(U,V).
\]
If $B$ is $A$-flat this collapses to give
\[
\Ext_A^*(U,V) \iso \Ext_B^*(B\otimes_A U,V).
\]
When $A=H_R^*H$ and $B=\StA$ we obtain
\[
\Ext_{H_R^*H}^*(\F_2,\F_2) \iso
\Ext_{\StA}^*(\StA\otimes_{H_R^*H}\F_2,\F_2)
\iso \Ext_{\StA}^*(H^*R,\F_2)
\]
which is the $\mathrm{E}_2$-term of the Adams
spectral sequence for computing~$\pi_*R$; of
course, this is a standard change of rings
isomorphism.

\section{Cell and CW $R$-modules}\label{sec:CellModules}
Now we assume that $R$ is $p$-local for some
prime~$p$ and also $(-1)$-connected with
$R_0=\pi_0R$ a acyclic $\Z_{(p)}$-module. This
means that is a local graded ring whose maximal
ideal $\mathfrak{m}\lhd\pi_*R$ consists of the
ideal $(p)\lhd R_0$ together with all positive
degree elements.

The notions of cell and CW $R$-modules have
the usual forms described in~\cite{EKMM}.
The $n$-skeleton $X^{[n]}$ of such a cell
or CW $R$-module~$X$ is obtained from the
$(n-1)$-skeleton $X^{[n-1]}$ by forming
a cofibre sequence of form
\begin{equation}\label{eq:Cell-nskeleton}
\bigvee_iS_R^{n-1}\xrightarrow{j^{n-1}}
X^{[n-1]}\to X^{[n]}.
\end{equation}
Here we take $S_R^k$ to be the functorial
cofibrant replacement of $\Sigma^k R$ (recall
that~$R$ is not cofibrant in the model category
$\mathcal{M}_R$ of~\cite{EKMM}). Associated
with such a cell $R$-module~$X$ there is a
cellular chain complex of $R_0$-modules
$\mathrm{C}^{\mathrm{CW}}_*(X)$ satisfying
\begin{equation}\label{eq:Cell-homology}
\mathrm{H}_*(\mathrm{C}^{\mathrm{CW}}_*(X)\otimes_{R_0}\F_p)
\iso H^R_*X.
\end{equation}

It is often useful to work with a minimal
cell structure; we adapt the notion of
minimal cell structure from~\cite{AJB&JPM}
to the present context. A CW $R$-module~$X$
is \emph{minimal} if for every~$n$, the
attaching map
$j^{n-1}$ of~\eqref{eq:Cell-nskeleton}
satisfies
\[
\im[j^{n-1}_*\colon\pi_*(\bigvee_iS_R^{n-1})
\to\pi_*X^{[n-1]}]
\subseteq \mathfrak{m}\pi_*X^{[n-1]}.
\]
It is easy to see that every connective
$R$-module with finite type homotopy can
be realised by a finite type connective
minimal CW $R$-module. Furthermore,
by~\eqref{eq:Cell-homology} the cells
of this give an $\F_p$-basis for $H^R_*X$,
and the inclusion map induces a monomorphism
\[
H^R_{n-1}X^{[n-1]} \to H^R_{n-1}X^{[n]}.
\]

In order to describe CW modules or their
(co)homology, we will often use cell
diagrams or diagrams showing bases
with action of $H_R^*H$. Usually we will
assume a minimal cell structure has been
used so cells will correspond to basis
elements. For example, when $R=S$, the
mapping cone of the Hopf map
$\eta\colon S^1\to S^0$, $X=S^0\cup_\eta e^2$,
and its mod~$2$ cohomology, $H^*X$, can
be represented by diagrams such as the
following.
\begin{center}
\begin{tikzpicture}
\Vertex[x=-1,y=1.6,size=.5,label={$2$},color=white]{A2}
\Vertex[x=-1,y=0,size=.5,label={$0$},color=white]{A0}
\Edge[lw=0.75pt,bend=-45,label={$\eta$},position=right](A0)(A2)
\Vertex[x=1,y=1.6,size=.05,color=black]{B2}
\Vertex[x=1,y=0,size=.05,color=black]{B0}
\Edge[lw=0.75pt,Direct,bend=-45,label={$\Sq^2$},position=right](B0)(B2)
\end{tikzpicture}
\end{center}
Such diagrams are standard in homotopy
theory, for example they are discussed
by Barratt, Jones \&
Mahowald~\cite{MGB-JDSJ-MEM:Relns}.

Occasionally we will require more
complicated diagrams which involve
maps between more general objects
than just spheres. For example,
given maps $g\colon X\to Y$ and
$h\colon Y\to Z$ with $hg$ null
homotopic, we can factor
$\Sigma g\colon\Sigma X\to\Sigma Y$
through the mapping cone of~$h$ and
so define an object which is the
mapping cone of a factor
$\Sigma X \to Z\cup_h CY$ represented
by the following diagram.
\[
\xymatrix{
\Sigma^2X \\
\Sigma Y\ar@{-}[u]^{\Sigma g} \\
Z\ar@{-}[u]^h
}
\]

Here is a useful result on building
objects realising such diagrams; it
generalises well known results for
maps between spheres. Part (a) features
prominently in~\cite{MGB-JDSJ-MEM:Relns}
while in~\cite{AB:Joker} we used~(b)
to construct realisations of Joker-like
modules.
\begin{lem}\label{lem:TodaBracket}
Suppose that $f\colon W\to X$, $g\colon X\to Y$
and $h\colon Y\to Z$ where~$gf$ and
$hg$ are null homotopic; therefore
the Toda bracket $\langle h,g,f\rangle_R
\subseteq\mathscr{D}_R(\Sigma W,Z)$
is defined. \\
\emph{(a)} It is possible to realise
the diagram
\[
\xymatrix{
\Sigma^3W \\
\Sigma^2X\ar@{-}[u]^{\Sigma^2 f} \\
\Sigma Y\ar@{-}[u]^{\Sigma g} \\
Z\ar@{-}[u]^h
}
\]
if and only if the Toda bracket
$\langle h,g,f\rangle_R$ contains zero. \\
\emph{(b)} If there are maps $k\:\Sigma W\to U$
and $\ell\:U\to Z$ so that
$\ell k\in\langle h,g,f\rangle_R$ then
we can realise the diagram
\[
\xymatrix{
& \Sigma^3W & \\
\Sigma^2X\ar@{-}[ur]^{\Sigma^2 f} && \\
&& \Sigma U\ar@{-}[uul]_{\Sigma g} \\
\Sigma Y\ar@{-}[uu]^{\Sigma g} &&  \\
& Z\ar@{-}[ul]^h\ar@{-}[uur]_{\ell} &
}
\]
\end{lem}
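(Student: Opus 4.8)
The plan is to build both realisations directly from the defining cofibre sequences, using the standard description of the Toda bracket $\langle h,g,f\rangle_R$ via null-homotopies. Recall that since $gf\homeq *$ we may choose a null-homotopy, equivalently an extension of $g$ over the mapping cone $\cone f = X\cup_f CW$, giving a map $\bar g\colon \cone f\to Y$; and since $hg\homeq *$ we get a coextension, a map $\tilde h\colon Y\to \fib h$ or dually a factorisation through $Z\cup_h CY$. The Toda bracket is then the set of composites $\Sigma W\to Z$ arising from all such choices, landing in $\mathscr{D}_R(\Sigma W,Z)$. The realisations requested in the lemma are precisely iterated mapping cones assembled from the maps $\Sigma^2 f$, $\Sigma g$, $h$, so the content is to translate "the diagram can be realised" into "a suitable composite is null" and then into "the Toda bracket contains zero".

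For part (a), first I would set $Y_1 = Z\cup_h CY$, which realises the two-stage diagram with $h$ at the bottom. The next stage requires extending $\Sigma g\colon \Sigma X\to\Sigma Y$ over the inclusion $\Sigma Y\hookrightarrow Y_1$ to get a map $\Sigma X\to Y_1$, then taking its cone; but in fact what one wants is a map whose cone has $\Sigma^2 X$ sitting correctly, so I would instead work one suspension up and build $\Sigma Y_1 = \Sigma Z\cup_{\Sigma h}C\Sigma Y$, extend to a map $\phi\colon\Sigma^2 X\to \Sigma Y_1$, form $\cone\phi$, and finally attach a $\Sigma^3 W$-cell via $\Sigma^2 f$. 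The obstruction to the last attachment is that $\phi\circ\Sigma^2 f\colon\Sigma^3 W\to\Sigma Y_1$ be null-homotopic; chasing this composite through the cofibre sequence $\Sigma^2 Y\to\Sigma Y_1\to\Sigma^2 Z$ and using that $\Sigma^2 f$ lifts $\Sigma^2 g\circ\Sigma^2 f\homeq *$ shows the relevant obstruction class in $\mathscr{D}_R(\Sigma^3 W,\Sigma Y_1)$ is exactly (a suspension of) an element of the Toda bracket $\langle h,g,f\rangle_R$. Hence the diagram is realisable iff that obstruction can be arranged to vanish, i.e.\ iff $0\in\langle h,g,f\rangle_R$; the "only if" direction comes from reading the bracket element back off any given realisation by restricting the top attaching map.

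For part (b), the hypothesis $\ell k\in\langle h,g,f\rangle_R$ means we have a specific representative of the bracket which factors as $\Sigma W\xrightarrow{k}U\xrightarrow{\ell}Z$. The idea is that, although $\ell k$ need not be zero, it \emph{becomes} null after composing into the cone $\cone\ell = Z\cup_\ell CU$ (equivalently, it lifts to $\fib(\ell)$ suitably), and more to the point the chosen null-homotopy exhibiting $\ell k$ as a bracket element gives precisely the compatibility data needed to glue $\Sigma U$ into the diagram along both $\Sigma g$ (to $\Sigma Y$) and $\Sigma^2 f$ (to $\Sigma^2 X$). Concretely I would form the partial realisation from part (a)'s construction up through the $\Sigma Y$--$Z$ stage, observe that the failure of the $\Sigma^2 f$-attachment is governed by the class $\ell k$, introduce $U$ via the map $k$ so that $\Sigma U$ supplies a cell killing that class, and attach $\ell$ to connect $\Sigma U$ back to $Z$; the pentagon-shaped diagram in the statement is exactly the poset of cells of the resulting iterated cone. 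The main obstacle in both parts is bookkeeping: matching suspension degrees across the cofibre sequences and checking that the three available null-homotopies (for $gf$, for $hg$, and the one packaged in "$\ell k\in\langle h,g,f\rangle_R$") are compatible enough to assemble a single homotopy-coherent diagram rather than merely existing separately. This coherence is what forces the use of the Toda bracket rather than the weaker statements "$gf\homeq *$ and $hg\homeq *$". I expect the cleanest route is to phrase everything in terms of maps from suspensions of $W,X$ into the successive cofibres, so that each attachment is a single extension problem whose obstruction is visibly a bracket element, exactly as in the sphere case treated in~\cite{MGB-JDSJ-MEM:Relns} and~\cite{AB:Joker}.
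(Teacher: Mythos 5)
The paper does not actually prove Lemma~\ref{lem:TodaBracket}; it defers to \cite{MGB-JDSJ-MEM:Relns} for part (a) and \cite{AB:Joker} for part (b), so there is no in-text argument to compare against. Your overall strategy for (a) --- build the linear chain cell by cell and identify the obstruction to the final attachment with a Toda bracket --- is the standard and correct one. However, the ``work one suspension up'' detour is both unnecessary and introduces a degree error: if $\phi\colon\Sigma^2 X\to\Sigma Y_1$, then $\cone\phi=\Sigma Y_1\cup_\phi C\Sigma^2 X$ contributes a cell in degree $\Sigma^3 X$, not $\Sigma^2 X$. The correct bookkeeping stays with $Y_1=Z\cup_h CY$ and attaches $C\Sigma X$ along a map $\alpha\colon\Sigma X\to Y_1$ that \emph{lifts} $\Sigma g$ through the boundary $Y_1\to\Sigma Y$ (not ``extending over an inclusion $\Sigma Y\hookrightarrow Y_1$'' --- $\Sigma Y$ is a quotient of $Y_1$, not a subobject), then attaches $C\Sigma^2 W$ along $\beta\colon\Sigma^2 W\to Y_2$ lifting $\Sigma^2 f$ through $Y_2\to\Sigma^2 X$. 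The obstruction to the existence of $\beta$ is $\Sigma\alpha\circ\Sigma^2 f\in\mathscr{D}_R(\Sigma^2 W,\Sigma Y_1)$; its image in $\Sigma^2 Y$ is $\Sigma^2(gf)\simeq *$, so it lifts to $\Sigma Z$, and that lift is a representative of $\Sigma\langle h,g,f\rangle_R$. Varying $\alpha$ and the choice of lift sweeps out the entire bracket, which is what converts ``the obstruction vanishes for some choices'' into ``$0\in\langle h,g,f\rangle_R$''.

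For (b) the proposal is too vague to be a proof: ``introduce $U$ via $k$ so that $\Sigma U$ supplies a cell killing that class, and attach $\ell$ to connect $\Sigma U$ back to $Z$'' does not say how the new cell interacts with the $h$, $\Sigma g$, $\Sigma^2 f$ attaching data. The clean reduction is to set $Z'=Z\cup_\ell CU$ with inclusion $i\colon Z\to Z'$; since $i\ell\simeq *$ we get $i(\ell k)\simeq *$, and by the juggling inclusion $i_*\langle h,g,f\rangle_R\subseteq\langle ih,g,f\rangle_R$ this gives $0\in\langle ih,g,f\rangle_R$. Applying (a) to $f$, $g$, $ih$ produces the linear chain over $Z'$, whose cells together with the two cells of $Z'$ give the pentagon. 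The one point you must still verify --- and do not --- is that the component of the top attaching map landing on the $\Sigma U$ cell is $\Sigma k$. This is exactly where the specific factorisation $\ell k$, rather than merely $0\in\langle ih,g,f\rangle_R$, gets used: the canonical cone null-homotopy of $i\ell$ precomposed with $k$ is the null-homotopy fed into the construction of the top cell, and it records $\Sigma k$ on the $\Sigma U$ component. Without that check the pentagon is not realised as claimed.
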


Here $\langle-,-,-\rangle_R$ denotes the Toda
bracket calculated in the homotopy category
of $R$-modules.

\begin{rem}\label{rem:TodaBrackets-Whitehead}
There is a different kind of Toda bracket that
we might consider for an $R$ ring spectrum~$A$
and a left $A$-module~$X$ spectrum (both being
$R$-modules of course). For example we can consider
$\langle u,v,w\rangle_{R,X}$ for $u\in\pi_r(A)$,
$v\in\pi_s(A)$ and $w\in\pi_t(X)$ where $uv=0$
in $\pi_*(A)$ and $vw=0$ in $\pi_*(X)$ where
these products are defined using the evident
ring structure on $\pi_*(A)$ and the
$\pi_*(A)$-module structure of $\pi_*(X)$.
The resulting bracket is a subset of
$\pi_{r+s+t+1}(X)$ with indeterminacy
$u\pi_{s+t+1}(X)+\pi_{r+s+1}(A)w$. A version
of this theory was described by
G.~Whitehead~\cite{GWW:RecentAdvHtpyThy} well
before modern categories of spectra were
developed but the essential ideas can be found
in his work. Some applications of these brackets
can be found in~\cite{AJB&JPM} using examples
given in~\cite{GWW:RecentAdvHtpyThy}; in these
we have $R=A=S$ and $X=\kO$ or $\kU$.
\end{rem}

\section{Duality}\label{sec:duality}

Various sorts of duality occur in the categories
$\mathscr{M}_R$ and $\mathscr{D}_R$, generalising
classical cases.

\subsection*{Spanier-Whitehead duality}
For an account of duality from a categorical
viewpoint, we recommend the article of Dold
\& Puppe~\cite{Dold&Puppe:Duality}.

Following~\cites{Dold&Puppe:Duality,EKMM},
the symmetric monoidal category $\mathscr{D}_R$
has strongly dualisable objects and so there
is a version of Spanier-Whitehead duality;
we will denote the Spanier-Whitehead of
an $R$-module~$X$ by $D_RX=F_R(X,S_R^0)$.
As usual, when~$X$ is a finite CW~module
we can replace it by a weakly equivalent
CW~module; it is well known that an $R$-module
is strongly dualisable if it is equivalent
to a retract of a finite CW~module. Of course,
if $Z$ is a strongly dualisable $S$-module
(i.e., a spectrum) then $R\wedge Z$ is a
strongly dualisable $R$-module; more
generally, if $Z$ is a strongly dualisable
$R'$-module where~$R$ is a commutative
$R'$-algebra, then $R\wedge_{R'}Z$ is
strongly dualisable.

When~$L$ is strongly dualisable, the Adams
spectral sequence of~\eqref{eq:ASS-cohomology}
can be expressed as
\begin{equation}\label{eq:ASS-cohomology-SWD}
\mathrm{E}_2^{s,t} =
\Ext_{H_R^*H}^{s,t}(H_R^*(D_RL\wedge_RM),\F_p)
\Lra
\mathscr{D}_R(\Sigma^{t-s},D_RL\wedge_RM)\sphat\;
\iso
\mathscr{D}_R(S_R^{t-s}L,M)\sphat
\end{equation}
since there are natural isomorphisms of
functors
\[
\Hom_{H_R^*H}(H_R^*(D_RL)\otimes(-),\F_p)
\iso
\Hom_{H_R^*H}(\mathrm{D}(H_R^*L)\otimes(-),\F_p)
\iso
\Hom_{H_R^*H}(-,H_R^*L)
\]
on left $H_R^*H$-modules extending to right
derived functors. This is useful for
computational purposes as it allows us to
work consistently with projective resolutions
and calculations with right derived functors
of form $\Hom_{H_R^*H}(-,\F_p)$.

\subsection*{Poincar\'e duality and Spanier-Whitehead
duality}

We begin with some algebra. Let $\k$ be a field
and~$K^*$ a connected graded cocommutative Hopf
algebra of finite type. We will indicate the
coproduct~$\psi$ using the notation
\[
\psi\theta
=
\theta\otimes1 + 1\otimes\theta
           + \sum_i \theta'_i\otimes\theta_i''
=
\theta\otimes1 + 1\otimes\theta
           + \sum_i \theta''_i\otimes\theta_i'
\]
where the degrees of $\theta',\theta_i''$ are
positive and smaller than the degree of
$\theta$. The action of the antipode~$\chi$
will often be indicated by writing
$\chi\theta=\bar{\theta}$, so
\[
\bar{\theta} = -\theta - \sum_i \bar{\theta'_i}\theta_i''
= -\theta - \sum_i \theta_i'\bar{\theta''_i}.
\]

Now let $P_*$ be a local Poincar\'e duality
algebra of degree~$d$ and let its graded dual
be $P^*$ where $P^n=\Hom_\k(P_n,\k)$. This
means that $P_n=0$ except when $0\leq n\leq d$,
$P_0=\k$ and there is a $\k$-linear isomorphism
$P_0\xrightarrow{\iso}P^d$ with
$1\leftrightarrow\lambda$ which induces
isomorphisms
\[
P_n \xrightarrow{\iso} P^{d-n};
\quad
x \mapsto x\lambda
\]
where
\[
(x\lambda)(y) = \lambda(yx)
\]
for all $y\in P_{d-n}$. The pairing
\[
P_* \otimes P^* \to P^*;
\quad
x \otimes \gamma \mapsto x\gamma
\]
makes $P^*$ a left $P_*$-module and the above
duality isomorphism can be interpreted as
defining an isomorphism of left $P_*$-modules
\[
P_* \xrightarrow{\iso} P^*[-d];
\quad
x \mapsto x\lambda.
\]

Now suppose that $P_*$ is a left $K^*$-module
algebra. This means that there are pairings
\[
K^r\otimes P_s \xrightarrow{\iso} P_{s-r};
\quad
\theta\otimes x \mapsto \theta x
\]
so that the Cartan formula holds for all
$x,y\in P_*$:
\[
\theta(xy) =
(\theta x)y + x(\theta y)
     + \sum_i (\theta'_ix)(\theta_i''y).
\]
There is also an action of $K^*$ on $P^*$
given by pairings
\[
K^r\otimes P^s \xrightarrow{\iso} P^{r+s};
\quad
\theta\otimes\gamma \mapsto \theta\gamma
\]
where
\[
(\theta\gamma)(z) = \gamma(\bar{\theta}z)
\]
and this makes $P^*$ a left $H^*$-module.

\begin{lem}\label{lem:PD-HA}
The duality isomorphism of left\/ $P_*$-modules
$P_*\xrightarrow{\iso}P^*[-d]$ is also
an isomorphism of left $H^*$-modules.
\end{lem}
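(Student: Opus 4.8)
The plan is to rephrase the assertion as a single adjunction identity for the antipode of $K^*$ and then prove that identity by induction on degree. Since $\phi\colon P_*\to P^*[-d]$, $\phi(x)=x\lambda$, is already known to be a $P_*$-linear bijection, it is enough to show that $\phi$ is also $K^*$-linear.

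First I would unwind the two $K^*$-actions along $\phi$. For $\theta\in K^*$ and $x,y\in P_*$ one has $(\phi(\theta x))(y)=((\theta x)\lambda)(y)=\lambda\big(y(\theta x)\big)$, while $(\theta\,\phi(x))(y)=(\theta(x\lambda))(y)=(x\lambda)(\bar\theta y)=\lambda\big((\bar\theta y)x\big)$. Using that a Poincar\'e duality algebra is (graded-)commutative, $\phi$ is $K^*$-linear if and only if
\begin{equation*}
\lambda\big((\theta x)\,y\big)=\lambda\big(x\,(\bar\theta y)\big)\qquad\text{for all }\theta\in K^*,\ x,y\in P_*.\tag{$\dagger$}
\end{equation*}
In words, $(\dagger)$ says that multiplication by $\theta$ is adjoint to multiplication by its antipode $\bar\theta$ with respect to the nondegenerate pairing $(x,y)\mapsto\lambda(xy)$ that witnesses Poincar\'e duality, so $(\dagger)$ is the whole content of the lemma.

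The engine of the proof is that $\lambda$ annihilates $K^+\cdot P_*$, for degree reasons: an element of $K^r$ with $r>0$ lowers degrees, $\lambda$ detects only the top piece $P_d$, and $P_*$ vanishes in degrees above $d$, so $\lambda(\theta z)=0$ for every $\theta\in K^+$ and $z\in P_*$. Applying this to $z=xy$ and expanding $\theta(xy)$ with the Cartan formula gives, for all $\theta\in K^+$,
\begin{equation*}
\lambda\big((\theta x)y\big)+\lambda\big(x(\theta y)\big)+\sum_i\lambda\big((\theta'_i x)(\theta''_i y)\big)=0.\tag{$\ast$}
\end{equation*}
Now I would prove $(\dagger)$ by induction on $\deg\theta$. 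The case $\deg\theta=0$ is trivial, since then $\theta$ is a scalar multiple of the unit and $\bar\theta=\theta$. For $\deg\theta=r\ge1$ one solves $(\ast)$ for $\lambda\big((\theta x)y\big)$; in the reduced coproduct both $\theta'_i$ and $\theta''_i$ have positive degree strictly below $r$, so the inductive hypothesis applied to the operator $\theta'_i$ rewrites $\lambda\big((\theta'_i x)(\theta''_i y)\big)=\lambda\big(x\,\bar{\theta'_i}(\theta''_i y)\big)=\lambda\big(x\,(\bar{\theta'_i}\theta''_i)y\big)$, using associativity of the module action. Collecting the terms yields $\lambda\big((\theta x)y\big)=\lambda\big(x\,(-\theta-\sum_i\bar{\theta'_i}\theta''_i)\,y\big)$, and the operator in parentheses is exactly $\bar\theta$ by the antipode recursion recalled just before the lemma. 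This establishes $(\dagger)$, hence $\phi$ is $K^*$-linear and therefore an isomorphism of $K^*$-modules.

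The only step requiring real care is this final identification: one must verify that every reduced-coproduct term has degree in $\{1,\dots,r-1\}$, so that the induction is legitimate, and that the combination produced is $\bar\theta$ on the nose. Here the cocommutativity of $K^*$, which permits the reindexing $\sum_i\theta'_i\otimes\theta''_i=\sum_i\theta''_i\otimes\theta'_i$, is precisely what makes the expression match either of the two displayed forms of the antipode recursion. (If one prefers not to invoke commutativity of $P_*$, the same induction proves the ordered identity $\lambda\big(a(\theta b)\big)=\lambda\big((\bar\theta a)b\big)$ directly, once more using only the cocommutativity of $K^*$.)
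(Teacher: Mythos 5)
Your proposal is correct and follows essentially the same route as the paper: both arguments reduce the claim to the adjunction identity for the antipode, observe that $\lambda(\theta(xy))=0$ for degree reasons when $\deg\theta>0$, expand via the Cartan formula, apply the inductive hypothesis to the reduced-coproduct terms, and invoke the recursive formula for $\bar\theta$. The only (immaterial) difference is which tensor factor of the reduced coproduct carries the induction, corresponding to which of the two displayed forms of the antipode recursion is used.
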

\begin{proof}
We have to show that for all homogeneous
elements $x,y\in P_*$ and $\theta\in K^*$,
\[
\lambda(y(\theta x))
= ((\theta x)\lambda)(y)
= \lambda((\bar{\theta}y)x).
\]
We will prove this by induction on the degree
of~$\theta$. It is clearly true when~$\theta$
has degree $0$. So assume it holds whenever
$\theta$ has degree less than $n>0$.

Suppose that $\theta$ has degree~$n$. Consider
$\theta(yx)$; in order for this element to
have degree~$d$, $yx$ has to be of degree
$n+d>0$, hence~$yx=0$. So
\begin{align*}
0 = ((\theta(yx))\lambda)(1)  &= \lambda(\theta(yx))   \\
&= \lambda((\theta y)x)  +  \lambda(y\theta x)
+ \sum_i  \lambda((\theta'_iy)(\theta_i''x))   \\
&= \lambda((\theta y)x)  +  \lambda(y\theta x)
+ \sum_i  \lambda((\bar{\theta''_i}\theta_i'y)x)  \\
&= \lambda(y\theta x )  -  \lambda((\bar{\theta} y)x)
+  \lambda \biggl((\theta y)x
+ \sum_i  \lambda((\bar{\theta'_i}\theta_i''y)x
+ (\bar{\theta} y)x\biggr)  \\
&= \lambda(y\theta x )  -  \lambda((\bar{\theta} y)x).
\end{align*}
Thus $\lambda(y\theta x) = \lambda((\bar{\theta}y)x)$,
so the result holds for all~$n$.
\end{proof}

Now let $R$ be a commutative $S$-algebra satisfying
the conditions assumed earlier. In particular,
suppose that~$R_0$ is a cyclic $\Z_{(p)}$-module
for some prime~$p$.

Suppose that $E$ is an $R$ ring spectrum
for which $P_*=H^R_*E$ is a local Poincar\'e
duality algebra over~$\F_p$. Taking $K^*=H_R^*H$,
the Spanier-Whitehead dual of~$E$ satisfies
\[
H^R_*D_RE \iso H_R^*E\iso H^R_*E[d]
\]
as $H_R^*H$-modules.

The next result is very useful for identifying
Spanier-Whitehead stably self dual objects.
\begin{prop}\label{prop:PD-HA}
There is a morphism of $R$-modules $E\to\Sigma^dR$
inducing a non-trivial homomorphism
\[
H^R_*E \to H^R_*R[-d] = \F_p[-d].
\]
The multiplication map $E\wedge_R E\to E$
composed with this map define a duality
pairing $E\wedge_R E\to\Sigma^dR$. Hence
$E$ is a Spanier-Whitehead stably self
dual $R$-module with $D_RE\sim\Sigma^{-d}E$.
\end{prop}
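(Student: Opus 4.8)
The plan is to construct the map $E\to\Sigma^dR$ from a minimal cell structure on~$E$, post-compose the multiplication with it to get the pairing $E\wedge_RE\to\Sigma^dR$, and then check that its adjoint $\Sigma^{-d}E\to D_RE$ realises on $H^R_*(-)$ exactly the Poincar\'e duality isomorphism of $P_*$; since $\Sigma^{-d}E$ and $D_RE$ are finite cell $R$-modules, this $H^R_*$-isomorphism will be a weak equivalence, giving the self-duality.

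First I would fix a finite minimal CW structure on~$E$ (Section~\ref{sec:CellModules}). As $P_*=H^R_*E$ vanishes in degrees $>d$ and $P_d\iso\F_p$ is one-dimensional, $E$ has no cells above dimension~$d$ and exactly one $d$-cell, so $E=E^{[d]}$ and collapsing the $(d-1)$-skeleton yields a cofibre sequence $E^{[d-1]}\to E\xrightarrow{\alpha}S^d_R\simeq\Sigma^dR$. By~\eqref{eq:Cell-homology}, or directly from the long exact sequence together with the minimality condition $H^R_{d-1}E^{[d-1]}\hookrightarrow H^R_{d-1}E$, the map $\alpha$ induces an isomorphism $P_d\xrightarrow{\iso}H^R_d(\Sigma^dR)$; that is, $\alpha_*\colon P_*\to\F_p[-d]$ is, up to a scalar, the functional~$\lambda$ projecting onto the top component. (Alternatively $\alpha$ may be exhibited as an Adams filtration-zero class for~\eqref{eq:ASS-cohomology}, since the projection $P_*\to P_d$ is automatically $H_R^*H$-linear: positive-degree operations annihilate $P_{>d}=0$.)

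Next I would set $\beta\colon E\wedge_RE\to\Sigma^dR$ equal to the composite of the product $E\wedge_RE\to E$ with~$\alpha$, and let $\tilde\beta\colon\Sigma^{-d}E\to D_RE=F_R(E,S^0_R)$ be its adjoint, using $F_R(E,\Sigma^dR)\simeq\Sigma^dD_RE$. Since $\Sigma^{-d}E$ and $D_RE$ are finite cell $R$-modules it is enough to prove $\tilde\beta$ is an $H^R_*$-isomorphism. The base-change equivalence $H\wedge_R(E\wedge_RE)\simeq(H\wedge_RE)\wedge_H(H\wedge_RE)$ and the splitting of $H\F_p$-module spectra give a K\"unneth isomorphism $H^R_*(E\wedge_RE)\iso P_*\otimes_{\F_p}P_*$, under which $\beta_*$ is the pairing $x\otimes y\mapsto\lambda(xy)$; unwinding the adjunction through this and through $H^R_*D_RE\iso\mathrm{D}P_*$, one finds $\tilde\beta_*$ sends $x$ to the functional $y\mapsto\lambda(xy)$, which is precisely the Poincar\'e duality isomorphism of $P_*$ (and, by Lemma~\ref{lem:PD-HA}, is even $H_R^*H$-linear). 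Hence $\tilde\beta_*$ is an isomorphism, so $\tilde\beta$ is an $H^R_*$-equivalence of finite cell $R$-modules, hence a weak equivalence --- by the Whitehead-type theorem for $R$-modules (the cellular chain complex over the local ring $R_0$ detecting the homotopy type), or by comparing $E_2$-terms in the Adams spectral sequence~\eqref{eq:ASS-cohomology} for $\mathscr{D}_R(\Sigma^{-d}E,D_RE)$. Thus $D_RE\simeq\Sigma^{-d}E$ with duality pairing~$\beta$. The main obstacle I anticipate is exactly this last identification: tracing the evaluation adjunction through the K\"unneth isomorphism and through $H^R_*D_RE\iso\mathrm{D}P_*$ carefully enough to recognise $\tilde\beta_*$ as the Poincar\'e duality map with the correct module structures --- which is the one place Lemma~\ref{lem:PD-HA} is essential. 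With that settled, the remaining ingredients (minimal cell structures, base change, and the $R$-module Whitehead theorem) are the standard tools of Sections~\ref{sec:HtpyRModules}--\ref{sec:duality}.
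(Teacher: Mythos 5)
Your proposal is correct and follows essentially the same route as the paper: choose a minimal CW structure, collapse onto the single top cell to get $E\to\Sigma^dR$, compose with the multiplication to obtain the pairing, and observe that the adjoint induces the (non-degenerate) Poincar\'e duality isomorphism on $H^R_*$, hence is a weak equivalence of finite cell $R$-modules. The paper's proof is terser — it leaves the K\"unneth identification, the adjunction bookkeeping, and the Whitehead-type step implicit — but the content is the same, including the appeal to Lemma~\ref{lem:PD-HA} for $H_R^*H$-compatibility.
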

\begin{proof}
Choose a minimal CW $R$-module realisation
of~$E$; this will have a single cell in
each of the degrees~$0$ and~$d$. Inclusion
of the bottom cell induces the unit
$\F_p\to H^R_0E$, while collapse onto the
top cell induces a non-trivial linear
mapping $H^R_dE\to\F_p$ and this gives a
basis element of $H_R^dE$; by composing
with a self map of $S_R^d\sim\Sigma^dR$
we can assume this corresponds to
$1\in H^R_0E$ under the Poincar\'e duality
isomorphism $H^R_0E\xrightarrow{\iso}H_R^dE$.
The product $E\wedge_R E\to E$ composed
with the projection $E\to\Sigma^dR$ gives
rise to a morphism $f\:E\to\Sigma^dD_RE$
and induces a non-degenerate pairing
$H^R_*E\otimes H^R_*E\to\F_p[d]$. It follows
that
$f_*\:H^R_*E\xrightarrow{\iso}H^R_*D_RE[d]$
and so $f\:E\xrightarrow{\sim}\Sigma^{d}D_RE$.
Notice that all the algebraic maps here
are compatible with the actions of~$H_R^*H$.
\end{proof}

For any compact Lie group~$G$, $E=R\wedge(G_+)$
provides an example of such a stably self dual
$R$ ring spectrum. A generalisation to finite
$H$-spaces was proved by Browder \&
Spanier~\cite{WB&ES:Hspacesduality}, and some
exotic examples can be found in the papers of
Bauer, Pedersen and Rognes
\cites{TB:pCmpctGps,TB&EKP:LoopSpcesMfds,JR:MAMS192};
in all these classical cases, $R=S$ is the base
spectrum but the ideas work more generally.
More recently, Beaudry et al \cite{5A:QuoRingsHH}
have also considered Spanier-Whitehead
self duality over commutative $S$-algebras.

\subsection*{Cyclic modules}
It is common to encounter cyclic modules
of the form $\StA(n)\otimes_{\mathcal{B}}\F_2$
arising as cohomology of $R$-modules for
examples such as $R=\kO$ and $R=\tmf$.
Proposition~\ref{prop:PD-HA} sometimes
allows us to identify the underlying
$R$-module as stably Spanier-Whitehead
self dual, but this seems not to be
a purely algebraic result. For example,
consider the $\StA(1)$-module
\[
\StA(1)\otimes_{\F_2(\Sq^2)}\F_2
= \StA(1)/\StA(1)\{\Sq^2\}.
\]
Viewing this as the quotient of $\StA(1)$
obtained by killing the white circles in
the diagram below, we see that this is
the question mark module which is clearly
not self dual.
\[
\begin{tikzpicture}
\Vertex[x=-1,y=6,color=white,size=.05]{A6}
\Vertex[x=-1,y=5,color=white,size=.1]{A5}
\Vertex[x=-1,y=4,size=.1,color=white]{A4}
\Vertex[x=-2,y=3,size=.1,color=white]{A31}
\Vertex[x=-1,y=3,size=.1,color=black]{A32}
\Vertex[x=-2,y=2,size=.1,color=white]{A2}
\Vertex[x=-2,y=1,size=.1,color=black]{A1}
\Vertex[x=-2,y=0,size=.1,color=black]{A0}
\Edge[lw=0.75pt](A5)(A6)
\Edge[lw=0.75pt,bend=-45](A4)(A6)
\Edge[lw=0.75pt](A32)(A4)
\Edge[lw=0.75pt](A31)(A5)
\Edge[lw=0.75pt](A2)(A31)
\Edge[lw=0.75pt](A2)(A4)
\Edge[lw=0.75pt,label={$\Sq^2$},position=right,distance=.1mm](A1)(A32)
\Edge[lw=0.75pt,label={$\Sq^1$},position=right,distance=.1mm](A0)(A1)
\Edge[lw=0.75pt,bend=45](A0)(A2)
\Vertex[x=2,y=3,size=.1,color=black]{B32}
\Vertex[x=2,y=1,size=.1,color=black]{B1}
\Vertex[x=2,x=2,y=0,size=.1,color=black]{B0}
\Edge[lw=0.75pt,bend=-45](B1)(B32)
\Edge[lw=0.75pt](B0)(B1)
\Vertex[x=-0.5,y=1.5,Pseudo]{P}
\Vertex[x=1.5,y=1.5,Pseudo]{Q}
\Edge[lw=0.75pt,Direct](P)(Q)
\end{tikzpicture}
\]

The situation when $\mathcal{B}$ is a subHopf
algebra is more interesting and we will discuss
it from an algebraic perspective below. We will
encounter many examples of this kind arising
as cohomology of~$\kO$ or~$\tmf$-modules. The
situation described in our next result is
encountered in many examples.
\begin{prop}\label{prop:SubAlg}
Let $X$ be an $R$-module and let
$\mathcal{B}\subseteq H_R^*H$ be a subalgebra.
Suppose that as a $\mathcal{B}$-module,
$H_R^*X\iso\mathcal{B}$. Then we have the
following. \\
\emph{(a)}
$H_R^*X$ is a cyclic $H_R^*H$-module. \\
\emph{(b)}
The element $x_0\in H_R^0X\iso\mathcal{B}^0=H_R^0H$
corresponding to\/~$1$ induces a cofibre sequence
\[
X\xrightarrow{x_0} H \to H/X
\]
and in turn a split short exact sequence of
$\mathcal{B}$-modules
\[
0\leftarrow H_R^*X\leftarrow H_R^*H \leftarrow H_R^*(H/X)\leftarrow0.
\]
\emph{(c)}
If $H_R^*H$ is self-injective and flat as
a $\mathcal{B}$-module, then $\mathcal{B}$
is self-injective.
\end{prop}
\begin{proof}
(a) This is obvious. \\
(b) We have a commutative diagram with exact
row
\[
\xymatrix{
H_R^*X & \ar[l]_{x_0^*} H_R^*H & \ar[l]H_R^*(H/X) \\
& \mathcal{B}\ar@{^{(}->}[u]\ar@{<->}[ul]^{\iso} &
}
\]
showing that the row is split exact as a diagram
of $\mathcal{B}$-modules.  \\
(c) This follows from the following standard result:
For an injective $H_R^*H$-module~$J$, the adjunction
isomorphism
\[
\Hom_{\mathcal{B}}(-,J)
\iso
\Hom_{H_R^*H}(H_R^*H\otimes_{\mathcal{B}}(-),J)
\]
is exact.
\end{proof}

Of course (c) applies whenever $\mathcal{B}$
is a subHopf algebra of~$H_R^*H$. we will
see some examples of this when $R=\kO$ and
$R=\tmf$.

\subsection*{Some algebraic results}
Next we give some purely algebraic results
that we will use. The reader is referred 
to the following for general results on 
Hopf algebras: 
Larson \& Sweedler~\cite{LarsonSweedlerThm},
Humphreys~\cite{JEH:SymmFinDimlHA}*{theorem~1}
and Montgomery~\cite{SM:HopfAlgActions};
the book of Lorenz~\cite{ML:TourRepThy}
also contains much useful material. 

We will assume the reader is aware of basic 
results such as the freeness of a Hopf algebra 
over a subHopf algebra (Milnor-Moore or 
Nichols-Zoeller for the ungraded case), and 
Poincar\'e duality/Frobenius property and 
self injectivity for finite dimensional Hopf 
algebras (Browder-Spanier or Larson-Sweedler 
for the ungraded case).

First we give a statement and proof of a
result for non-graded Hopf algebras which
ought to be standard; the closest reference
appears to be in Fischman et
al~\cite{DF&SM&HJS:FrobeniusExtns}*{theorem~4.8
and corollary~4.9}. This version incorporates
suggestions of Ken Brown which led to
a substantial improvement of our original
attempt. Of course, if~$K$ is a \emph{normal}
subHopf algebra (i.e., if $HK^+=K^+H$) then
$H/\!/K$ is a Hopf algebra and this result
is immediate. 

Topologists are often interested in the case 
where~$H$ is commutative or cocommutative when 
its antipode is self inverse making it is 
involutive, i.e., $\chi\circ\chi=\Id$; the 
same is of course also true for any subHopf 
algebra. Finally we remark that when~$H$ is 
a local algebra its socle is $1$-dimensional 
so it is modular.

\begin{prop}\label{prop:Cyclic-Selfdual}
Let $H$ be a finite dimensional unimodular 
Hopf algebra over a field\/~$\k$ and let~$K$ 
be a subHopf algebra which is also unimodular. 
Then
\[
H/\!/K = H\otimes_{K}\k \iso H/HK^+
\]
is a self dual left $H$-module.
\end{prop}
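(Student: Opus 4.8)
The plan is to produce an explicit isomorphism of left $H$-modules between $H/\!/K$ and its $\k$-linear dual $\mathrm{D}(H/\!/K)$, exploiting the fact that $H$ is free as a (left and right) $K$-module together with the Poincar\'e duality/Frobenius properties of the finite dimensional Hopf algebras $H$ and $K$. First I would record the standard structural facts: by Nichols--Zoeller $H$ is free over $K$, say of rank $m=\dim_\k H/\dim_\k K$; both $H$ and $K$ are Frobenius algebras, so each carries a (left) integral, and the hypothesis that $K$ is unimodular means its left and right integrals coincide, giving a two-sided integral $t_K\in K$. The key point is that unimodularity of $K$ forces the subspace $K^+H=HK^+$ behaviour one needs on the relevant side even though $K$ need not be normal: multiplication by $t_K$ gives a map $H\to H$ whose image is exactly the space of elements annihilated by $K^+$ on the appropriate side, and this will be the bridge between $H/\!/K=H/HK^+$ and the submodule $\operatorname{ann}_?(K^+)\subseteq H$.

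The main steps, in order: (1) Use the Frobenius form $\lambda_H\colon H\to\k$ (a nonzero left integral of $\mathrm{D}H$) to identify $\mathrm{D}H\iso H$ as left $H$-modules; this is the self-injectivity/Poincar\'e duality for $H$. (2) Dualize the quotient map $H\twoheadrightarrow H/\!/K$ to realise $\mathrm{D}(H/\!/K)$ as the subspace of $\mathrm{D}H$ consisting of functionals vanishing on $HK^+$; under the identification in (1) this subspace corresponds to a left-submodule $J\subseteq H$, namely the left annihilator (or right annihilator, depending on the commutative vs.\ cocommutative case) of $K^+$. (3) Show that left multiplication by the integral $t_K\in K$ induces an isomorphism of left $H$-modules $H/\!/K\xrightarrow{\;\iso\;}J$: it clearly kills $HK^+$ since $K^+t_K=0$ by the two-sided integral property, so it factors through $H/\!/K$; it is $H$-linear because $t_K$ is central enough for this one-sided statement (here the commutative-or-cocommutative hypothesis is used to move $t_K$ past elements of $H$ on the correct side); injectivity and surjectivity follow by a dimension count, using that $\dim_\k J = \dim_\k\mathrm{D}(H/\!/K)=m=\dim_\k H/\!/K$ and that the image of $\cdot\, t_K$ has dimension $m$ because $H$ is free of rank $m$ over $K$ and $t_K$ spans the $1$-dimensional socle-type ideal $Kt_K=\k t_K$. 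Composing the isomorphisms of (1)--(3) yields $H/\!/K\iso J\iso\mathrm{D}(H/\!/K)$ as left $H$-modules, which is the claim.

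I expect the main obstacle to be step (3), specifically verifying $H$-linearity of multiplication-by-$t_K$ as a map out of $H/\!/K$ \emph{without} assuming $K$ normal. The naive computation needs $h t_K = t_K h'$ for suitable $h'$, which is exactly what normality would give; the substitute is that when $H$ is commutative this is automatic, and when $H$ is cocommutative one instead dualizes and argues with the integral of $\mathrm{D}H$ restricted along $\mathrm{D}H\twoheadrightarrow\mathrm{D}K$, trading "multiply by the integral of $K$" for "multiply by the integral of $\mathrm{D}K$" and using cocommutativity to get the one-sided centrality needed. A secondary subtlety is matching up the grading shift in the graded case (the Poincar\'e duality isomorphisms of $H$ and of $K$ introduce degree shifts $d_H$ and $d_K$, and one must check $d_H-d_K$ is consistent on both sides so that $H/\!/K$ is self dual on the nose up to the expected shift); this is where the finite-type and connectedness hypotheses enter, and it reduces to the additivity $d_H=d_K+d_{H/\!/K}$ of top degrees for the free extension, which follows from freeness of $H$ over $K$.
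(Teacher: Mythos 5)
Your outline follows essentially the same route as the paper's proof: Nichols--Zoeller freeness of $H$ over $K$, the two-sided integral $s_0\in K$ supplied by unimodularity, the annihilator identities $\ann^{\mathrm{l}}_H(s_0)=HK^+$ and $\ann^{\mathrm{r}}_H(s_0)=K^+H$, the Frobenius form $\lambda$ of $H$, and a dimension count to finish. The one point you must fix is the left/right bookkeeping in your step (3). Left multiplication $x\mapsto t_Kx$ does not obviously kill $HK^+$ --- your justification ``$K^+t_K=0$'' shows that \emph{right} multiplication by the integral kills $HK^+$, since $(hz)t_K=h\epsilon(z)t_K=0$ for $z\in K^+$ --- and it is also not left $H$-linear, which is the obstacle you flag. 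The paper sidesteps both issues at once by using right multiplication $x\mapsto xs_0$: this is automatically a map of left $H$-modules, and its kernel is exactly $\ann^{\mathrm{l}}_H(s_0)=HK^+$ by freeness plus unimodularity, giving $H/\!/K\iso Hs_0$ with no appeal to commutativity or cocommutativity. That hypothesis is instead spent at the duality step, where one checks that $x\mapsto\lambda(\chi(x)(-)s_0)$ descends to an injection $H/\!/K\to\Hom_\k(Hs_0,\k)$, using $\chi(HK^+)=K^+H$, $\chi\circ\chi=\mathrm{Id}$, and the (symmetric, hence Nakayama-trivial) Frobenius pairing. So the ``main obstacle'' you identify dissolves once the integral is multiplied on the correct side; with that correction your argument is the paper's.
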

\begin{proof}
Let $\lambda$ be a Frobenius form for~$H$
which we take to be a left and right integral
in the dual Hopf algebra $\mathrm{D}H=\Hom_\k(H,\k)$.
The \emph{Nakayama algebra automorphism}
$\nu\:H\to H$ is characterised by the
identity
\begin{equation}\label{eq:SymmetryCond}
\lambda(xy) = \lambda(y\nu(x))
\end{equation}
for all $x,y\in H$. If the associated bilinear 
pairing $H\otimes_\k H\to\k$ is symmetric 
(which is true when~$H$ is unimodular and 
involutive) then~$\nu=\Id_H$.

As~$K$ is also unimodular, its ($1$-dimensional)
vector spaces of left and right integrals coincide,
i.e., $\sint^{\mathrm{l}}_K = \sint^{\mathrm{r}}_K$,
and we will just write $\sint_K$ for this subspace.
By definition, the left and right annihilators
of $\sint_K$ and any non-zero element $s\in\sint_K$
satisfy
\[
\ann^{\mathrm{l}}_K\sint_K = \ann^{\mathrm{l}}_K(s)
= \ann^{\mathrm{r}}_K\sint_K = \ann^{\mathrm{r}}_K(s)
= K^+,
\]
the kernel of the counit $K\to\k$ which is a maximal
ideal. By the Nichols-Zoeller theorem~\cite{WDN&MBZ:HAfreeness},
$H$ is free as a left or right $K$-module, so the
left annihilators in~$H$ satisfy
\begin{equation}\label{eq:leftannsocK}
\ann^{\mathrm{l}}_H\sint_K = \ann^{\mathrm{l}}_H(s)
= HK^+,
\end{equation}
the left ideal of~$H$ generated by~$K^+$. Similarly
the right annihilators satisfy
\begin{equation}\label{eq:rightannsocK}
\ann^{\mathrm{r}}_H\sint_K = \ann^{\mathrm{r}}_H(s)
= K^+H = \chi(HK^+).
\end{equation}

Now choose a non-zero element $s_0\in\int_K$.
The $\k$-linear mapping
\[
\lambda'\: H\to k;
\quad
x \mapsto \lambda(xs_0)
\]
satisfies
\[
\lambda'(xz) = 0
\]
whenever $x\in H$ and $z\in K^+$, so it factors
through a linear mapping $\lambda''\:H/\!/K\to\k$.
It follows that the left $H$-module homomorphism
\[
H \to H;
\quad
x\mapsto xs_0
\]
has kernel $HK^+$ and so induces an isomorphism
$H/\!/K\xrightarrow{\iso}Hs_0$.

Define a left $H$-module structure on
$\Hom_\k(Hs_0,\k)$ by setting
\[
(x\cdot\alpha)(z) = \alpha(\chi(x)z)
\]
for $\alpha\in\Hom_\k(Hs_0,\k)$ and $x\in H$.
Now define a left $H$-module homomorphism
\[
H\to \Hom_\k(Hs_0,\k);
\quad
x\mapsto x\cdot\lambda'.
\]
Using the Nakayama automorphism characterised
in~\eqref{eq:SymmetryCond} we obtain
\[
(x\cdot\lambda')(z) = \lambda(\chi(x)z)
                    = \lambda(z\nu(\chi(x))).
\]
So when $x\in HK^+=\chi(K^+H)$ we have $(x\cdot\lambda')(z) = 0$,
and therefore we can factor our homomorphism
through a left $H$-module homomorphism
$H/\!/K\to\Hom_\k(Hs_0,\k)$. Also, when
$x\notin HK^+$, since the kernel of the
functional $\lambda((-)\nu(\chi(x)))\:H\to\k$
cannot contain a non-trivial left ideal,
the functional $x\cdot\lambda\:Hs_0\to\k$
must be non-trivial. This shows that there
is an injection
\[
H/\!/K\to\Hom_\k(Hs_0,\k)\iso\Hom_\k(H/\!/K,\k)
\]
which must be an isomorphism since the
dimensions of the domain and codomain
agree.
\end{proof}


The proof of the following involves a modification
of that for Proposition~\ref{prop:Cyclic-Selfdual}
with suitable allowances for gradings.
\begin{prop}\label{prop:Cyclic-Selfdual-Graded}
Let $H$ be a finite dimensional commutative
or cocommutative unimodular connected graded
Hopf algebra over a field\/~$\k$ and let\/~$K$
be a subHopf algebra. Then
\[
H/\!/K=H\otimes_{K}\k \iso H/HK^+
\]
and there is an isomorphism of left $H$-modules
\[
\Hom_\k(H/\!/K,\k) \xrightarrow{\iso} H/\!/K[-d],
\]
hence $H/\!/K$ is a stably self dual left
$H$-module.
\end{prop}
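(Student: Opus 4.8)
The plan is to mimic the proof of Proposition~\ref{prop:Cyclic-Selfdual}, tracking cohomological degrees throughout. As in the ungraded case, I would begin by fixing a Frobenius form $\lambda$ for $H$: since $H$ is finite dimensional, connected and graded, it is a Poincar\'e duality algebra of some dimension $d$, and I take $\lambda$ to be a basis element of the one-dimensional space $\Hom_\k(H^d,\k)$, viewed as a left and right integral in $\mathrm{D}H$. Because $H$ is commutative or cocommutative, its antipode $\chi$ is an involution; because $H$ is assumed unimodular, the associated bilinear pairing $H\otimes_\k H\to\k$ is symmetric and the Nakayama automorphism satisfies $\nu=\Id_H$. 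The grading forces $\lambda$ to be homogeneous of degree $-d$, so the pairing $\langle x,y\rangle=\lambda(xy)$ is graded, vanishing unless $\deg x+\deg y=d$.

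Next I would reproduce the annihilator computations. The subHopf algebra $K$ is itself a connected graded Poincar\'e duality algebra, so it is automatically unimodular and its space of integrals $\sint_K$ is one-dimensional, concentrated in the top degree of $K$, say $d_K$, with $\ann_K^{\mathrm l}\sint_K=\ann_K^{\mathrm r}\sint_K=K^+$. Since $H$ is free as a left and as a right $K$-module by Milnor--Moore (the graded analogue of Nichols--Zoeller), I get $\ann_H^{\mathrm l}\sint_K=HK^+$ and $\ann_H^{\mathrm r}\sint_K=K^+H=\chi(HK^+)$, exactly as in~\eqref{eq:leftannsocK} and~\eqref{eq:rightannsocK}. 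Fixing a nonzero $s_0\in\sint_K$, which sits in degree $d_K$, right multiplication $x\mapsto xs_0$ is a left $H$-module map with kernel $HK^+$, inducing an isomorphism of left $H$-modules $H/\!/K\xrightarrow{\iso}Hs_0$ that \emph{raises degrees by $d_K$}. Dually, $Hs_0$ lives in degrees between $d_K$ and $d$, so it is a Poincar\'e duality module whose own "top degree" is $d$ and whose "bottom" is $d_K$; the relevant total dimension is $d-d_K$, which is precisely the top degree $d'$ of $H/\!/K$ (one checks $d=d_K+d'$ from the freeness of $H$ over $K$).

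Then I would run the duality argument. Put $\lambda'(x)=\lambda(xs_0)$; this is homogeneous of degree $-(d-d_K)=-d'$ as a functional on $H$ — equivalently, on $Hs_0$ it has degree $-d$ — and it vanishes on $HK^+$, so it descends to $\lambda''\colon H/\!/K\to\k$ of degree $-d'$. Define a left $H$-action on $\Hom_\k(Hs_0,\k)$ by $(x\cdot\alpha)(z)=\alpha(\chi(x)z)$ and consider the left $H$-module map $H\to\Hom_\k(Hs_0,\k)$, $x\mapsto x\cdot\lambda'$. Using $\nu=\Id$ and symmetry of the pairing, $(x\cdot\lambda')(z)=\lambda(\chi(x)z)=\lambda(z\chi(x))$, which vanishes when $x\in HK^+=\chi(K^+H)$, so the map factors through $H/\!/K$; and when $x\notin HK^+$ the functional $\lambda(z\chi(x))$ on $Hs_0$ is nonzero because its kernel cannot contain a nonzero left ideal (here I use that $Hs_0\iso H/\!/K$ and the self-injectivity/Poincar\'e duality of the Poincar\'e duality algebra structure on $H/\!/K$, exactly as in the ungraded proof). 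Hence $H/\!/K\to\Hom_\k(Hs_0,\k)$ is injective, and since $\dim H/\!/K=\dim Hs_0=\dim\Hom_\k(Hs_0,\k)$ it is an isomorphism. Composing with the dual of $H/\!/K\xrightarrow{\iso}Hs_0$ and bookkeeping the degree shift ($x\mapsto xs_0$ raises degree by $d_K$, so its dual lowers by $d_K$, and $\lambda'$ carries an extra $-d'$), I land on an isomorphism of left $H$-modules $\Hom_\k(H/\!/K,\k)\xrightarrow{\iso}(H/\!/K)[-d]$. Since $H/\!/K$ is the cohomology of a suitable $R$-module (or we simply take this as the definition of stable self-duality of the module), this gives the claimed stable self-duality.

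The main obstacle I anticipate is purely the grading bookkeeping: making sure the degree of $\lambda$, the degree $d_K$ of $s_0$, and the top degree $d'$ of $H/\!/K$ are consistently tracked so that the final shift comes out as $[-d]$ and not $[-d']$ or $[-d_K]$. The conceptual content is identical to Proposition~\ref{prop:Cyclic-Selfdual}; the one genuinely new input is the observation that a finite dimensional connected graded Hopf algebra (and any subHopf algebra of it) is automatically a Poincar\'e duality algebra, hence automatically unimodular, which is what lets the symmetric-pairing hypothesis $\nu=\Id$ be invoked for $H$ and makes $\sint_K$ one-dimensional and top-degree-concentrated.
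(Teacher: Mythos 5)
Your overall strategy --- rerunning the proof of Proposition~\ref{prop:Cyclic-Selfdual} while tracking degrees, using that a finite dimensional connected graded Hopf algebra is a Poincar\'e duality algebra whose (two-sided) integrals are concentrated in the top degree, and that $H$ is $K$-free by Milnor--Moore --- is exactly what the paper intends: its ``proof'' is the single sentence that the argument is a modification of the ungraded one with suitable allowances for gradings. Each step you transcribe (the annihilator computations, the factorisation of $x\mapsto x\cdot\lambda'$ through $H/\!/K$, nondegeneracy via the fact that $\ker\lambda$ contains no nonzero ideal) is the correct graded analogue.

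The one genuine slip is the final degree count, precisely the point you flagged as the main danger: the shift is $-(d-d_K)$, i.e.\ minus the top degree $d'$ of $H/\!/K$, not $-d$. Your own intermediate computation already gives this: $\lambda''\colon H/\!/K\to\k$ is homogeneous of degree $-d'$, so the perfect pairing it induces identifies $\Hom_\k(H/\!/K,\k)$ with $H/\!/K[-d']$. Concretely, $\Hom_\k(H/\!/K,\k)$ is concentrated in degrees $-d'$ through $0$, whereas $H/\!/K[-d]$ is concentrated in degrees $-d$ through $-d_K$; these agree only when $K=\k$. In assembling the shifts you added $-d_K$ (from dualising $x\mapsto xs_0$) to $-d'$ (from $\lambda'$), but the first contribution enters with the opposite sign: since $Hs_0\iso (H/\!/K)[d_K]$ sits in \emph{higher} degrees than $H/\!/K$, passing from $\Hom_\k(Hs_0,\k)$ to $\Hom_\k(H/\!/K,\k)$ \emph{raises} degrees by $d_K$, and combining with the $-d$ coming from $\lambda$ gives $d_K-d=-d'$. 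The ``$[-d]$'' in the statement must therefore be read with $d$ denoting the top degree of $H/\!/K$ itself; this is confirmed by the paper's own remark following Proposition~\ref{prop:ExtH//K} that $\mathrm{D}(H/\!/K)\iso H/\!/K[e-d]$ (with $d,e$ the top degrees of $H,K$), and by the topological applications, e.g.\ $D_\kO H\Z\sim\Sigma^{-5}H\Z$, where $5$ is the top degree of $\StA(1)/\!/\StA(0)$ rather than $6$, the top degree of $\StA(1)$. So your explicit assertion that the answer is ``$[-d]$ and not $[-d']$'' should be reversed; with that correction the argument is complete.
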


Recall that a connected cohomologically graded
$\k$-algebra~$A$ has $A^i=0$ when $i<0$ and
$A^0\iso\k$. A finite dimensional connected
graded Hopf algebra over a field~$\k$ is a
Poincar\'e duality algebra of some dimension~$d$,
and a basis element of the $1$-dimensional
$\k$-vector space $\Hom_\k(A^d,\k)$ provides
a `Frobenius form' with similar properties
to the ungraded case. The antipode~$\chi$
acts as an isomorphism on the $1$-dimensional
~$A^d$, hence the Hopf algebra~$A$ and its
graded dual $\mathrm{D}A$ are unimodular.
This means we can apply
Proposition~\ref{prop:Cyclic-Selfdual-Graded}
to pairs of finite dimensional subHopf algebras
of the Steenrod algebra~$\StA$.

In the next result we give some useful
consequences of Propositions~\ref{prop:Cyclic-Selfdual}
and~\ref{prop:Cyclic-Selfdual-Graded}
(in the latter case we need to interpret
modules and morphisms as being suitably
graded). First we need to set up some
notation.
\begin{itemize}
\item
We will set $\otimes=\otimes_\k$ and
$\Hom=\Hom_\k$.
\item
For a right $K$-module $N$ (i.e., a left
$K^\op$-module), make $\Hom_K(H,N)$ a left
$H$-module with action given by
\[
(h\cdot\phi)(x) = \phi(\chi(h)x)
\]
for $h,x\in H$ and $\phi\in\Hom_{K^\op}(H,N)$
where $H$ is regarded as a left $K^\op$-module
through right multiplication of~$K$. Also make
$H/\!/K\otimes N$ and $\Hom_\k(H/\!/K,N)$ left
$K^\op$-modules by letting~$K^\op$ act through
its action on~$N$; this makes them both
$H\otimes K^\op$-modules.
\item
When $L$ and $M$ are left $H$-modules make
$\Hom(L,M)$ a left $H$-module with action
given by
\[
(h\cdot\theta)(x) = \sum_i h''_i\theta(\chi(h'_ix))
\]
for $h\in H$, $x\in L$ and $\theta\in\Hom(L,M)$
and the coproduct on $h$ being
\[
\psi(h) = \sum_i h'_i\otimes h''_i.
\]
It is well known that $\theta$ satisfies
$h\cdot\theta = \epsilon(h)\theta$ for
all $h\in H$ if and only if
$\theta\in\Hom_H(L,M)\subseteq\Hom(L,M)$.
\item
Viewing $H$ and $M$ as left $K$-modules,
make $\Hom_K(H,M)$ a left $H$-module by
setting
\[
(h\cdot\rho)(x) = \rho(xh).
\]
\end{itemize}

\begin{prop}\label{prop:Cyclic-Selfdual-consequences}
Let $H$ and $K$ be as in \emph{Proposition~\ref{prop:Cyclic-Selfdual}}
or~\emph{\ref{prop:Cyclic-Selfdual-Graded}}.
Let $L$ and $M$ be left $H$-modules and let
$N$ be a right $K$-module. \\
\emph{(a)} There are natural isomorphisms
of left $H\otimes K^\op$-modules
\[
H/\!/K\otimes N
\xrightarrow{\iso} \Hom(H/\!/K,\k)\otimes N
\xrightarrow{\iso} \Hom(H/\!/K,N).
\]
\emph{(b)} There are natural isomorphisms
of left $H$-modules
\[
H/\!/K\odot M
\xrightarrow{\iso} \Hom(H/\!/K,M)
\xrightarrow{\iso} \Hom_K(H,M).
\]
\emph{(c)}
There is a natural isomorphism of\/
$\k$-vector spaces
\[
\Hom_H(L,H/\!/K\odot M)
\xrightarrow{\iso} \Hom_K(L,M).
\]
\end{prop}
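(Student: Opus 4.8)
The plan is to prove parts (a), (b), (c) in sequence, with each building on the previous, and to treat the graded case as a routine bookkeeping variant of the ungraded one.

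For part (a), I would start from the isomorphism established in Proposition~\ref{prop:Cyclic-Selfdual} (resp.~\ref{prop:Cyclic-Selfdual-Graded}), namely the left $H$-module isomorphism $H/\!/K \xrightarrow{\iso} \Hom(H/\!/K,\k)$. Tensoring over $\k$ with $N$ gives the first map, and since $N$ is merely a right $K$-module that acts on the tensor-factor $N$, this map is visibly $H\otimes K^\op$-linear. For the second map $\Hom(H/\!/K,\k)\otimes N \to \Hom(H/\!/K,N)$, I would use the standard evaluation map $\alpha\otimes n \mapsto (x\mapsto \alpha(x)n)$, which is an isomorphism because $H/\!/K$ is finite dimensional; one then checks it intertwines the $H$-action (through $H/\!/K$) and the $K^\op$-action (through $N$). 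This step is essentially formal.

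For part (b), the first isomorphism $H/\!/K\odot M \xrightarrow{\iso}\Hom(H/\!/K,M)$ is the special case of (a) where $N=M$ viewed as a right $K$-module via the antipode — here I need to be careful that the diagonal action $\odot$ on the left matches the $H$-action on $\Hom(H/\!/K,M)$ defined via the formula $(h\cdot\theta)(x)=\sum_i h''_i\theta(\chi(h'_ix))$, which is exactly the internal-hom action in the category of $H$-modules. The second isomorphism $\Hom(H/\!/K,M)\xrightarrow{\iso}\Hom_K(H,M)$ I would get by precomposing with the quotient $H\to H/\!/K$: a $\k$-linear map on $H/\!/K$ is the same as a $\k$-linear map on $H$ killing $HK^+$, and one checks that under the stated $H$-module structure on $\Hom_K(H,M)$ (namely $(h\cdot\rho)(x)=\rho(xh)$) the $K$-linearity condition $\rho(xk)=k\rho(x)$ corresponds precisely to factoring through $H/\!/K$ — using that $M$ is an $H$-module so $K$ acts on the target. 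Matching the two $H$-actions is the one computation that needs genuine care.

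For part (c), I would simply apply the adjunction $\Hom_H(L,\Hom_K(H,M))\iso\Hom_K(L,M)$, which is the Frobenius/extension-of-scalars adjunction for the inclusion $K\hookrightarrow H$ (valid since $H$ is free, hence faithfully flat, over $K$ by Nichols–Zoeller), combined with the isomorphism $H/\!/K\odot M\iso\Hom_K(H,M)$ from part (b). The main obstacle throughout is not conceptual but notational: keeping straight the several different $H$-module structures on Hom-objects (internal hom, the $\chi$-twisted action, the $(h\cdot\rho)(x)=\rho(xh)$ action) and verifying that the evident $\k$-linear isomorphisms are equivariant for the intended structures. In the graded setting one additionally carries the degree shift $[-d]$ from Proposition~\ref{prop:Cyclic-Selfdual-Graded} through parts (a) and (b), which only affects the placement of the shift and not the argument.
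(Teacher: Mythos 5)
The overall outline matches the paper for parts (a), (c), and the first isomorphism in (b). However, your treatment of the second isomorphism in part (b) contains a genuine error that invalidates that step.

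You claim $\Hom(H/\!/K,M)\xrightarrow{\iso}\Hom_K(H,M)$ is obtained by precomposing with the quotient $q\colon H\to H/\!/K$, asserting that the $K$-linearity of $\rho\colon H\to M$ ``corresponds precisely to factoring through $H/\!/K$.'' This is false on both sides. If $\phi\colon H/\!/K\to\k$ is an arbitrary $\k$-linear map, then $\phi\circ q\colon H\to M$ kills $HK^+$ but has no reason to satisfy $(\phi\circ q)(kx)=k(\phi\circ q)(x)$, since $K$ acts nontrivially on both $H/\!/K$ and $M$ and $\phi$ is not assumed $K$-linear; so $\phi\circ q$ need not land in $\Hom_K(H,M)$ at all. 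Conversely, a left-$K$-linear map $\rho\colon H\to M$ satisfies $\rho(kx)=k\rho(x)$, which says nothing about whether $\rho$ vanishes on the left ideal $HK^+=H\cdot K^+$; in general $\rho(hk')\neq 0$ for $h\in H$, $k'\in K^+$. So the two conditions are unrelated, and precomposition with $q$ is not even a map between the correct spaces, let alone an $H$-equivariant bijection.

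The paper's isomorphism is genuinely different: it uses the coproduct and antipode in an essential way, sending $\phi\mapsto\tilde\phi$ with
\[
\tilde\phi(x) = \sum_i x'_i\,\phi\bigl(\chi(x''_i)+HK^+\bigr),
\]
and inverse $\theta\mapsto\overset{\approx}{\theta}$ with $\overset{\approx}{\theta}(x+HK^+) = \sum_i\chi(x''_i)\,\theta(x'_i)$. Checking that $\tilde\phi$ is $K$-linear, that $\overset{\approx}{\theta}$ is well-defined on $H/\!/K$, that the two maps are mutually inverse, and that both are $H$-linear for the stated (and nontrivially twisted) module structures, is the substantive content of the step; your proposal elides it. I would also note, as a more minor point, that deducing the first isomorphism in (b) from (a) by ``viewing $M$ as a right $K$-module via the antipode'' requires reconciling the diagonal $\odot$-action of $H$ with the $H\otimes K^\op$-action from (a), and this is not a literal specialization; the paper instead verifies $H$-linearity directly. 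Parts (a) and (c) are fine and agree with the paper's argument.
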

\begin{proof}
(a) The first isomorphism of vector spaces
uses Proposition~\ref{prop:Cyclic-Selfdual},
the second is standard; these clearly respect
the $H\otimes K^\op$-module structures.  \\
(b) As for (a), there are isomorphisms
of $\k$-vector spaces
\[
H/\!/K\otimes M
\xrightarrow{\iso} \Hom(H/\!/K,\k)\otimes M
\xrightarrow{\iso} \Hom(H/\!/K,M)
\]
which are both $H$-linear.

Define a map
\[
\Hom(H/\!/K,M) \to \Hom_K(H,M);
\quad
\phi \mapsto \tilde\phi
\]
where
\[
\tilde\phi(x) = \sum_i x'_i\phi(\chi(x''_i)+HK^+).
\]
This has inverse
\[
\Hom_K(H,M) \to \Hom(H/\!/K,M);
\quad
\theta \mapsto \overset{\approx}{\theta}
\]
where
\[
\overset{\approx}{\theta}(x+HK^+)
      = \sum_i\chi(x''_i)\theta(x'_i).
\]
These are both $H$-linear. \\
(c) Using~(b) and a standard adjunction
result we obtain
\begin{align*}
\Hom_H(L,H/\!/K\odot M)
&\xrightarrow{\iso} \Hom_H(L,\Hom_K(H,M)) \\
&\xrightarrow{\iso} \Hom_K(H\otimes_HL,M) \\
&\xrightarrow{\iso} \Hom_K(L,M).
\qedhere
\end{align*}
\end{proof}

These identifications can be used to deduce
homological results. Here is one that is
useful in our work.
\begin{prop}\label{prop:ExtH//K}
Let $H$, $K$, $L$ and $M$ be as in
\emph{Proposition~\ref{prop:Cyclic-Selfdual-consequences}}.
Then
\[
\Ext^*_H(L,H/\!/K\odot M) \iso \Ext^*_K(L,M).
\]
\end{prop}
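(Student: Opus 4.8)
The plan is to deduce the $\Ext$ isomorphism directly from the natural isomorphism in part~(c) of Proposition~\ref{prop:Cyclic-Selfdual-consequences} by deriving both sides. Concretely, the functor $M\mapsto H/\!/K\odot M$ from left $H$-modules to left $H$-modules is exact (since $\odot$ is tensoring the underlying vector space with $H/\!/K$ over $\k$, hence exact, and the diagonal $H$-action is along for the ride), and it carries injective $H$-modules to injective $H$-modules — this is the content of part~(b), which identifies $H/\!/K\odot M$ with the coinduced module $\Hom_K(H,M)$, and coinduction from $K$ to $H$ is right adjoint to the exact restriction functor, hence preserves injectives. Therefore, given an injective resolution $M\to I^\bullet$ of $M$ as a left $K$-module, applying $H/\!/K\odot(-)$... wait — more carefully: take an injective resolution of $M$ \emph{as an $H$-module}? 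No. The cleanest route is: take an injective resolution $M\to I^\bullet$ in left $K$-modules; then $\Hom_K(H,I^\bullet)\iso H/\!/K\odot I^\bullet$ is an injective resolution of $H/\!/K\odot M$ in left $H$-modules (injectivity by the adjunction just noted, exactness because $H/\!/K\odot(-)\iso\Hom_K(H,-)$ is exact as $H$ is $K$-free by Nichols--Zoeller, hence $K$-flat). Then
\[
\Ext^*_H(L,H/\!/K\odot M) = H^*\Hom_H(L,\Hom_K(H,I^\bullet))
\iso H^*\Hom_K(L,I^\bullet) = \Ext^*_K(L,M),
\]
where the middle isomorphism is part~(c) applied degreewise and is natural in the resolution.

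First I would record the two exactness facts: $H/\!/K\odot(-)$ is exact on left $H$-modules (vector-space-level tensoring), and equivalently $\Hom_K(H,-)$ is exact on left $K$-modules because $H$ is free, hence flat, as a right $K$-module by Nichols--Zoeller (graded case) or the analogous statement invoked in the excerpt. Second, I would note that $\Hom_K(H,-)$ preserves injectives, being right adjoint to the exact restriction $\Mod_H\to\Mod_K$; combined with part~(b) this says $H/\!/K\odot I$ is an injective $H$-module whenever $I$ is an injective $K$-module. Third, I would choose an injective resolution $M\hookrightarrow I^\bullet$ over $K$, apply $H/\!/K\odot(-)$ to get an injective resolution of $H/\!/K\odot M$ over $H$, compute $\Ext^*_H(L,-)$ from it, and invoke the natural isomorphism of part~(c) to rewrite the resulting complex as $\Hom_K(L,I^\bullet)$. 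Naturality of~(c) guarantees the differentials match, so taking cohomology yields the claim. In the graded setting everything is done in the category of graded modules with degree-zero morphisms, and the same argument applies verbatim with $\Hom$ and $\Ext$ interpreted internally.

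The main obstacle is the bookkeeping around which resolution to use and in which category: one must resolve $M$ over $K$ (not over $H$), transport the resolution to $H$-modules via $H/\!/K\odot(-)$, and be sure this transported complex is genuinely an $H$-injective resolution — that is where both the exactness of $H/\!/K\odot(-)$ and the injective-preservation from part~(b) are essential, and where $K$-freeness of $H$ is silently used. A secondary point to check is that the isomorphism in part~(c) is natural enough in $M$ (through the variable $I^\bullet$) to commute with the coboundary maps; this follows because part~(c) was obtained from the natural isomorphisms of part~(b) composed with the standard Hom-tensor adjunction, all of which are natural in $M$. One could alternatively phrase the whole argument with projective resolutions of $L$ over $H$ and the companion adjunction $H\otimes_K(-)\dashv\text{restriction}$; I would pick whichever matches the conventions used elsewhere in the paper, but the injective-resolution-of-$M$-over-$K$ version above is the most direct given that parts~(b) and~(c) are stated in terms of $\Hom_K(H,-)$.
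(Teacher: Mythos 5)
Your argument is correct, but it runs dual to the paper's. The paper takes a projective resolution $P_*\to L$ over $H$, observes that each $P_s$ is also projective over $K$ because $H$ is $K$-free (Nichols--Zoeller), so that the \emph{same} complex is a projective resolution of $L$ over $K$, and then applies the adjunction $\Hom_H(P_*,\Hom_K(H,M))\iso\Hom_K(H\otimes_HP_*,M)\iso\Hom_K(P_*,M)$ before invoking part~(b) to replace $\Hom_K(H,M)$ by $H/\!/K\odot M$. You instead resolve $M$ injectively over $K$ and transport the resolution into $H$-modules via coinduction $\Hom_K(H,-)\iso H/\!/K\odot(-)$; this obliges you to verify two extra facts (exactness of coinduction, which really uses that $H$ is $K$-\emph{projective} rather than $K$-flat as you write at one point --- though your alternative justification via exactness of $H/\!/K\otimes_\k(-)$ covers this anyway --- and preservation of injectives, via the adjunction with the exact restriction functor), whereas the paper's single resolution does double duty with no transport needed. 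Both proofs are the two dual forms of the Eckmann--Shapiro change-of-rings argument and both rest on the same ingredients, namely Nichols--Zoeller freeness and the adjunction packaged in parts~(b) and~(c); the paper's version is shorter, while yours makes explicit the naturality of~(c) in the second variable, which the paper leaves implicit.
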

\begin{proof}
Take a projective resolution $P_*\to L$ of the
$H$-module~$L$. Then for each $s\geq0$,
\[
P_s \iso H\otimes_HP_s
\]
is both a projective $H$-module and a projective
$K$-module since~$H$ is a free left $K$-module.
Therefore $P_*\to L$ is also a projective
resolution for~$L$ as a $K$-module. Also
\begin{align*}
\Hom_H(P_*,\Hom_K(H,M))
&\iso
\Hom_K(H\otimes_HP_*,M)   \\
&\iso
\Hom_K(P_*,M),
\end{align*}
so on taking cohomology we obtain
\[
\Ext_H^*(L,\Hom_K(H,M)) \iso \Ext_K^*(L,M).
\]
The result now follows using
Proposition~\ref{prop:Cyclic-Selfdual-consequences}(b).
\end{proof}

In particular, the case where $L=H/\!/K$
and $ M=\k$ is useful for some of the
topological examples we will see later.

In the graded case, suppose that the top
degrees of~$H$ and~$K$ are~$d$ and~$e$
respectively. Then the top degree of
$H/\!/K$ is~$d-e$ and
\[
\mathrm{D}(H/\!/K) \iso H/\!/K[e-d].
\]

\subsection*{Poincar\'e duality for manifolds}

For a commutative ring spectrum~$E$, classical
Poincar\'e duality in $E$-theory is defined
using the slant product for a space~$X_+$
with disjoint base point
\[
E^*(X_+)\otimes_{E_*} E_*(X_+)\to E_*(X_+)
\]
and the augmentation induced by collapse on
the base point $E_*(X_+)\to E_*(S^0)=E_*$.
Underlying this are compositions of the
form
\[
\xymatrix{
\Sigma^nE\wedge X_+ \ar[r]^(.47){\Id\wedge\Delta} & E\wedge X_+\wedge X_+ \ar[r]
& \Sigma^nE\wedge\Sigma^rE\wedge X_+\ar[r]^(.53){\mathrm{mult}\wedge\Id}
& \Sigma^{n+r}E\wedge X_+ \ar[r]
& \Sigma^{n+r}E
}
\]
and when $E$ is a commutative $S$-algebra,
the composition is a morphism of left
$E$-modules. Of course this can also be
formulated in terms of Spanier-Whitehead
duality using Atiyah duality. This leads
to the following result which gives a
rich source of stably self dual $R$-modules;
similar ideas were explored by Douglas,
Henriques and Hill~\cite{CLD-AGH-MAH:HomObstrStrOtns}.
\begin{prop}\label{prop:PD-Rmods}
Let $R$ be a commutative $S$-algebra.
Suppose that~$M$ is a compact closed
smooth $n$-manifold whose normal bundle
admits an orientation in $R$-cohomology.
Then $R\wedge M_+$ is a stably
Spanier-Whitehead self $R$-module and
\[
D_R(R\wedge M_+) \sim R\wedge\Sigma^{-n}(M_+).
\]
\end{prop}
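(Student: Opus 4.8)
The plan is to reduce the statement to classical Atiyah duality combined with the Thom isomorphism for $R$-orientable bundles, and then to invoke the base-change formula for Spanier--Whitehead duals implicit in Section~\ref{sec:duality}. First I would fix a smooth embedding $M\hookrightarrow\R^N$ for $N$ large, with normal bundle $\nu\to M$ of rank $N-n$, so that $TM\oplus\nu$ is trivial; thus $\nu$ is $R$-orientable exactly when the stable normal bundle is, and the hypothesis furnishes an $R$-Thom class for $\nu$. Since $M$ is closed, the Thom spectrum $M^\nu$ is a finite $S$-module, and Atiyah duality provides an equivalence of $S$-modules $D_S(M_+)\simeq\Sigma^{-N}M^\nu$.

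The second step is to promote the $R$-orientation of $\nu$ to an equivalence of $R$-modules $R\wedge M^\nu\simeq\Sigma^{N-n}(R\wedge M_+)$. A Thom class is a map $u\colon M^\nu\to\Sigma^{N-n}R$ of $S$-modules restricting to a generator on each fibre, and composing the Thom diagonal $M^\nu\to M_+\wedge M^\nu$ with $\id\wedge u$ and then with the multiplication $R\wedge R\to R$ assembles into an $R$-module map $R\wedge M^\nu\to\Sigma^{N-n}(R\wedge M_+)$, in the same spirit as the slant-product composite written out above. Since this map realises the Thom isomorphism on $\pi_*(R\wedge-)$, and a map of $R$-modules inducing an isomorphism of homotopy groups is an equivalence, it is an equivalence of $R$-modules.

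Finally I would combine the two equivalences with base change for duals. Because $M_+$ is a finite $S$-module, $R\wedge M_+$ is strongly dualisable in $\mathscr{D}_R$, and the extension-of-scalars adjunction $F_R(R\wedge Z,-)\simeq F_S(Z,-)$ together with $F_S(Z,R)\simeq R\wedge D_SZ$ for finite $Z$ yields $D_R(R\wedge M_+)\simeq R\wedge D_S(M_+)$. Chaining everything,
\[
D_R(R\wedge M_+)\simeq R\wedge D_S(M_+)\simeq R\wedge\Sigma^{-N}M^\nu\simeq\Sigma^{-N}(R\wedge M^\nu)\simeq\Sigma^{-N}\Sigma^{N-n}(R\wedge M_+)\simeq R\wedge\Sigma^{-n}(M_+),
\]
which is the asserted self-duality.

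The step I expect to be the main obstacle is not any single computation but keeping everything strictly inside $\mathscr{M}_R$ (or its homotopy category): one must verify that the Thom equivalence of the second step is genuinely $R$-linear rather than merely an $R_*$-homology isomorphism, and that Atiyah duality and the base-change formula are applied to cofibrant, strongly dualisable representatives so that the chain of equivalences can be composed in $\mathscr{D}_R$. Once the Thom diagonal, the Thom class, and the ring multiplication of $R$ have been assembled into a single $R$-module map, the fact that it is an equivalence is formal from its effect on homotopy groups.
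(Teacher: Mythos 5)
Your proof is correct, and it takes the alternative route that the paper itself flags as possible: where the paper works directly with the slant-product composition
\[
\Sigma^nR\wedge M_+\to R\wedge M_+\wedge M_+\to\cdots\to\Sigma^{n+r}R,
\]
observing that choosing an $R$-orientation makes this an $R$-module map realising the Poincar\'e duality isomorphism $R_*(M_+)\iso R^{n-*}(M_+)$, you instead embed $M$ in $\R^N$, invoke Atiyah duality $D_S(M_+)\simeq\Sigma^{-N}M^\nu$, build the Thom equivalence $R\wedge M^\nu\simeq\Sigma^{N-n}(R\wedge M_+)$ from the Thom diagonal, Thom class and multiplication of $R$, and finish with the base-change identity $D_R(R\wedge Z)\simeq R\wedge D_S Z$ for finite $Z$. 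The two approaches are mathematically equivalent and the paper explicitly acknowledges the Atiyah-duality formulation in the remarks preceding the proposition. What your version buys is more transparency: the base-change step and the reason the Thom map is $R$-linear are spelled out, whereas the paper's terse ``by the above remarks'' leaves the reader to reconstruct how the slant product yields the stated weak equivalence of $R$-modules. The cost is a small amount of extra bookkeeping (fixing an embedding and tracking suspensions by $N$), which the paper avoids by working with the slant product intrinsically. Both proofs ultimately rest on the same point you correctly identify as the crux: the duality map must be constructed as an honest $R$-module morphism so that the homotopy-group isomorphism upgrades to a weak equivalence in $\mathscr{D}_R$.
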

\begin{proof}
On choosing an $R$-orientation, Poincar\'e
duality for $R_*(M_+)$ gives an explicit
isomorphism
\[
R_*(M_+) \xrightarrow{\iso} R^{n-*}(M_+).
\]
By the above remarks, this is induced by
a weak equivalence of $R$-modules
\[
D_R(R\wedge M_+) \sim R\wedge\Sigma^{-n}(M_+).
\qedhere
\]
\end{proof}

If $G$ is a compact Lie group, whenever
$A\leq G$ is a closed abelian subgroup (such
as a maximal torus), $G/A$ is stably frameable
and so stably self dual, hence $R\wedge(G/A_+)$
is a stably self dual $R$-module.

Since $\Spin$ manifolds are $\kO$-orientable
they satisfy the conditions; similarly when
$R=\tmf$, $\String$ manifolds are $\tmf$-orientable.
As examples of these, recall that $\RP^n$
is a $\Spin$ manifold if and only if
$n\equiv3\bmod{4}$, and it is a $\String$
manifold if $n\equiv7\bmod{8}$. So
$\kO\wedge(\RP^{4k-1}_+)$ is a stably self
dual $\kO$-module with
\[
D_\kO(\kO\wedge(\RP^{4k-1}_+))
\sim \kO\wedge\Sigma^{-4k+1}(\RP^{4k-1}_+)
\]
and $\tmf\wedge(\RP^{8\ell-1}_+)$ is a stably
self dual $\tmf$-module with
\[
D_\tmf(\tmf\wedge(\RP^{8\ell-1}_+))
\sim
\tmf\wedge\Sigma^{-8\ell+1}(\RP^{8\ell-1}_+).
\]
We will discuss homogeneous spaces in more
greater detail in Section~\ref{sec:HomogenSpces}.

%
%
%

\section{Recollections on the Steenrod algebra
and finite subHopf algebras}\label{app:Steenrodmodules}

The book of Margolis~\cite{HRM:Book} provides
a thorough treatment of the Steenrod algebra~$\StA$
and its finite subHopf algebras such as the~$\StA(n)$
family. For more on bases of~$\StA$ and its
subalgebras see the survey article of
Wood~\cite{RMWW:Problems}.

In this appendix we highlight some important
ideas that are useful for the present work.

\subsection*{The Wall relations for $\StA(n)$}
When working with $\StA$ or $\StA(n)$ we
often describe elements by dualising the
monomial basis in the dual. Recall that
\begin{align*}
\StA_* &= \F_2[\xi_i:i\geq1], \\
\StA(n)_* &= \F_2[\xi_i:i\geq1]
/(\xi_1^{2^{n+1}},\xi_2^{2^n},\xi_3^{2^{n-1}},
\ldots,\xi_{n+1}^{2},\xi_{n+2},\ldots).
\end{align*}
Then the basis of (residue classes of) monomials
$\xi_1^{r_1}\xi_2^{r_2}\cdots\xi_\ell^{r_\ell}$
defines a basis consisting of the elements
$\Sq(r_1,\ldots,r_\ell)$ in~$\StA$ or~$\StA(n)$.
We remark that in the computer algebra system
\Sage, only this basis is available for working
in~$\StA(n)$ making it the natural basis for
expressing results found with its aid. Of course
these basis elements can also be expressed
in terms of monomials in~$\Sq^r$ or indeed
the indecomposables~$\Sq^{2^s}$.

The usual Adem relations in $\StA$ do not
always restrict to a finite subalgebra~$\StA(n)$.
For example, the following consequences
of Adem relations
\[
\Sq^2\Sq^3 = \Sq^4\Sq^1 + \Sq^5
           = \Sq^4\Sq^1 + \Sq^1\Sq^4
\]
are not meaningful in $\StA(1)$ since
$\Sq^4\notin\StA(1)$.

A minimal set of relations between indecomposable
generators~$\Sq^{2^s}$ of~$\StA$ was determined
by Wall~\cite{CTCW:A-gens} and these do restrict
to defining relations for each~$\StA(n)$.
Incidentally, these relations can be interpreted
in either the sense of algebra relations or
module relations for the augmentation ideal
considered as a left or right module.

Consider the following elements of $\StA^*$:
for $0\leq s\leq r-2$ and $1\leq t$,
\begin{align*}
\tag*{(A)}\Theta(r,s) &=
\Sq^{2^r}\Sq^{2^s} + \Sq^{2^s}\Sq^{2^r}, \\
\tag*{(B)}\Phi(t) &=
\Sq^{2^t}\Sq^{2^t}
   + \Sq^{2^{t-1}}\Sq^{2^t}\Sq^{2^{t-1}}
   + \Sq^{2^{t-1}}\Sq^{2^{t-1}}\Sq^{2^t}.
\end{align*}
Then $\Theta(r,s)\in\StA(r-1)$ and
$\Phi(r)\in\StA(r-1)^*$, so these can
be expressed as polynomial expressions
in the $\Sq^{2^k}$ for $0\leq k\leq r-1$.
The elements of form
\[
\Sq^{2^r}\Sq^{2^s} + \Sq^{2^s}\Sq^{2^r}
+\; \Theta(r,s),
\quad
\Sq^{2^t}\Sq^{2^t}
+ \Sq^{2^{t-1}}\Sq^{2^t}\Sq^{2^{t-1}}
+ \Sq^{2^{t-1}}\Sq^{2^{t-1}}\Sq^{2^t}
+\; \Phi(t)
\]
give a minimal set of relations for~$\StA$.
In particular, such elements with $r,t\leq n$
form a minimal set of relations for~$\StA(n)$.
In the first few cases the Wall relations are
\begin{align*}
\StA(0):\quad \Sq^1\Sq^1& =0, \\
\StA(1):\quad \Sq^1\Sq^1
&= \Sq^2\Sq^2+\Sq^1\Sq^2\Sq^1 = 0 \\
\StA(2):\quad
\Sq^1\Sq^1
&= \Sq^2\Sq^2+\Sq^1\Sq^2\Sq^1 \\
&= \Sq^4\Sq^4+\Sq^2\Sq^4\Sq^2+\Sq^2\Sq^2\Sq^4 \\
&= \Sq^1\Sq^4+\Sq^4\Sq^1+\Sq^2\Sq^1\Sq^2 = 0.
\end{align*}

We will sometimes use the Milnor primitives
$\mathrm{P}_1^s$ ($s\geq0$) defined recursively
by
\[
\mathrm{P}_1^0=\Sq(1),
\qquad
\mathrm{P}_1^s =
\Sq^{2^s}\mathrm{P}_1^{s-1}+\mathrm{P}_1^{s-1}\Sq^{2^s}
\quad (s\geq1).
\]
More generally, Margolis~\cite{HRM:Book} uses
the notation $\Sq(r_1,\ldots,r_\ell)$ for the
element dual to the monomial
$\xi_1^{r_1}\cdots\xi_\ell^{r_\ell}$ and sets
$\mathrm{P}_t^s = \Sq(0,\ldots,0,2^s)$ where
$2^s$ occurs in the $t$-th place.

\subsection*{Doubling}
Doubling is discussed by Margolis~\cite{HRM:Book}*{section~15.3}
and also by the present author~\cite{AB:Joker}*{section~2}.
The main thing to observe is that for any $n\geq1$
there is a grade halving homomorphism of Hopf algebras
\[
\StA(n) \xrightarrow{\iso} \StA(n-1)
\]
which induces a grade halving isomorphism
\[
\StA(n)/\!/\StE(n) \xrightarrow{\iso} \StA(n-1)
\]
under which the residue class of $\Sq^k$ maps
to $\Sq^{k/2}$ if $k$ is even and $0$ otherwise.

If $M$ is a left $\StA(n-1)$-module then it
becomes $\StA(n)/\!/\StE(n)$-module $\leftidx{^{(1)}}M$
with all gradings doubled, hence it is also
a left $\StA(n)$-module. For example, when $n=1$
the $\StA(1)$-module
$\leftidx{^{(1)}}\StA(0)\iso\StA(1)/\!/\StE(1)$
is realised by the cohomology of the $\kO$-module
$\kU$ shown in~\eqref{eq:H*kU}, and when $n=2$,
the $\StA(2)$-module
$\leftidx{^{(1)}}\StA(1)\iso\StA(2)/\!/\StE(2)$
(the double of $\StA(1)$) is realised by the
cohomology of the $\tmf$-module
$\tmf_1(3)\sim BP\langle2\rangle$.

\section{$\kO$-modules and $\StA(1)$-modules}
\label{sec:kO+StA(1)}

In this section we review some well known
results on the (co)homology of $\kO$-modules
localised at the prime~$2$. Such results
were originally due to Adams \& Priddy,
Mahowald and Milgram,
see~\cites{JFA&SBP:BSO,MM:bo-res,MM&RJM:Sq4};
the book of Margolis~\cite{HRM:Book} also
provides a thorough treatment of stable
module categories for finite subHopf
algebras of~$\StA$.

We begin by summarising some relationships
between relative Steenrod algebras and
their duals that involve connective
$K$-theory; these are obtained using the
ideas discussed in Section~\ref{sec:HtpyRModules}.
These will be used heavily in what follows.
\begin{thm}\label{thm:kO&kU-StA}
For $H=H\F_2$ we have the following
identifications of Hopf algebras:
\begin{align*}
H_{\kO}^*H &\iso \StA(1)^* = \StA(1),
& H^{\kO}_*H &\iso \StA(1)_*, & \ph{\StA(1)_*}& \\
H_{\kU}^*H &\iso \StE(1)^* = \StE(1),
& H^{\kU}_*H &\iso \StE(1)_*. & \ph{\StA(1)_*}&
\end{align*}
\end{thm}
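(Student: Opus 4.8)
The plan is to compute each of these four Hopf algebras directly from the machinery assembled in Section~\ref{sec:HtpyRModules}, using the classical structure of $\StA_*$ and the known mod~$2$ homology of $\kO$ and $\kU$. Recall Milnor's presentation $\StA_* = \F_2[\xi_1,\xi_2,\ldots]$ and the standard facts that $H_*\kO \iso \F_2[\xi_1^4,\xi_2^2,\xi_3,\xi_4,\ldots]$ and $H_*\kU \iso \F_2[\xi_1^2,\xi_2,\xi_3,\ldots]$ as subcomodule algebras of $\StA_*$ (these are the homology of $\bo$ and $\bu$, respectively, and are precisely the loci on which the relevant Milnor primitives act trivially). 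The key structural input is that in each case $H_*R \hookrightarrow \StA_*$ is a monomorphism of comodule algebras and $\StA_*$ is a \emph{free} module over $H_*R$; this is exactly the hypothesis under which the discussion in Section~\ref{sec:HtpyRModules} yields $H^R_*H \iso \StA_*/\!/H_*R = \StA_* \otimes_{H_*R}\F_2$.

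First I would carry out the algebraic quotient computation. For $R = \kO$, forming $\StA_*\otimes_{H_*\kO}\F_2$ kills $\xi_1^4$, $\xi_2^2$, and all $\xi_i$ with $i\geq 3$, leaving $\F_2[\xi_1,\xi_2]/(\xi_1^4,\xi_2^2)$, which is precisely $\StA(1)_*$ by the presentation recalled in Section~\ref{app:Steenrodmodules} (the case $n=1$: $\xi_1^{2^2}, \xi_2^{2^1}, \xi_3, \ldots$). For $R = \kU$, forming $\StA_*\otimes_{H_*\kU}\F_2$ kills $\xi_1^2$ and all $\xi_i$ with $i\geq 2$, leaving $\F_2[\xi_1]/(\xi_1^2) = \StE(1)_*$, the exterior piece. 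This gives the right-hand column. I then need to check that the coalgebra (in fact Hopf algebra) structure matches: the coproduct on $H^R_*H$ is induced from that on $\StA_*$, and Milnor's coproduct formula $\psi\xi_k = \sum \xi_{k-i}^{2^i}\otimes\xi_i$ descends compatibly to the quotients, reproducing the standard Hopf algebra structures on $\StA(1)_*$ and $\StE(1)_*$. Dualising (everything is finite type) gives $H^*_R H \iso \StA(1)$ and $H^*_{\kU}H \iso \StE(1)$ respectively, using the self-duality-type identification $H_R^nH \iso \Hom_{\F_2}(H^R_nH,\F_2)$ from the strict duality in Section~\ref{sec:HtpyRModules}; the remark that $\StA(1)^* = \StA(1)$ is the usual coincidence for this small Hopf algebra. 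The compatibility with the inclusion into $\StA$ — i.e.\ that $H_R^*H$ sits inside $\StA = H^*H$ as the expected subHopf algebra $\StA(1)$ or $\StE(1)$ — follows from functoriality of $H_R^*(-)$ applied to the unit map $S\to R$.

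The main obstacle, and the step deserving the most care, is justifying the freeness of $\StA_*$ over $H_*\kO$ and over $H_*\kU$ (so that the Künneth spectral sequence~\eqref{eq:KunnethSS} collapses and $\Tor^{H_*R}_{s,t}$ is concentrated in $s=0$). For $\kU$ this is classical and easy: $H_*\kU$ is a polynomial subalgebra on a subset of polynomial generators, so $\StA_*$ is visibly free over it. For $\kO$ one must argue that $\F_2[\xi_1,\xi_2,\ldots]$ is free over $\F_2[\xi_1^4,\xi_2^2,\xi_3,\ldots]$, which again holds because we are adjoining truncated-polynomial generators $\xi_1$ (order $4$ over $\xi_1^4$) and $\xi_2$ (order $2$ over $\xi_2^2$) — a monomial basis for $\StA(1)_*$ lifts to a free basis. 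Once freeness is in hand, the rest is bookkeeping with Milnor generators and coproducts; I would present the $\kO$ computation in full and remark that the $\kU$ case is entirely parallel (and in fact follows from the $\kU$-orientation $\kO\to\kU$ together with the doubling picture recalled in Section~\ref{app:Steenrodmodules}). I would also note that multiplicativity of the relevant spectral sequences, already flagged in Section~\ref{sec:HtpyRModules}, is what upgrades these from isomorphisms of coalgebras to isomorphisms of Hopf algebras.
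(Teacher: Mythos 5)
Your overall strategy is exactly the one the paper intends: the theorem is stated without a separate proof precisely because it follows from the machinery of Section~\ref{sec:HtpyRModules} --- freeness of $\StA_*$ over $H_*R$ collapses the K\"unneth spectral sequence~\eqref{eq:KunnethSS} to $H^R_*H\iso\StA_*\otimes_{H_*R}\F_2$, one identifies this quotient explicitly from the known subalgebras $H_*\kO,H_*\kU\subseteq\StA_*$, and one dualises using the strict duality $H_R^nH\iso\Hom_{\F_2}(H^R_nH,\F_2)$. Your treatment of the $\kO$ case is correct, as is your discussion of freeness and of why the quotient inherits the Hopf algebra structure.

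However, the $\kU$ half contains a genuine error. You state $H_*\kU\iso\F_2[\xi_1^2,\xi_2,\xi_3,\ldots]$; the correct answer (dual to Stong's computation $H^*\kU\iso\StA/\!/\StE(1)$) is $H_*\kU\iso\F_2[\xi_1^2,\xi_2^2,\xi_3,\xi_4,\ldots]$. Consequently the quotient you compute, $\StA_*\otimes_{H_*\kU}\F_2=\F_2[\xi_1]/(\xi_1^2)$, is wrong, and your identification of it with $\StE(1)_*$ is false: in the paper's conventions $\StE(1)$ is the $4$-dimensional exterior algebra on the Milnor primitives $Q_0=\Sq^1$ and $Q_1=\mathrm{P}_1^1$ (this is forced, for instance, by the paper's identity $\StA(1)/\!/\StE(1)=\StA(1)/\StA(1)\{\Sq^1,\mathrm{P}_1^1\}$ being $2$-dimensional, and by the doubling isomorphism $\StA(n)/\!/\StE(n)\iso\StA(n-1)$), so $\StE(1)_*\iso\F_2[\xi_1,\xi_2]/(\xi_1^2,\xi_2^2)=\Lambda(\xi_1,\xi_2)$, with generators in degrees $1$ and $3$. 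The $2$-dimensional algebra $\F_2[\xi_1]/(\xi_1^2)$ that you obtained is $\StA(0)_*$, i.e.\ the answer for $R=H\Z$, not for $R=\kU$. With the corrected $H_*\kU$ the same argument does give $\StA_*\otimes_{H_*\kU}\F_2\iso\Lambda(\xi_1,\xi_2)=\StE(1)_*$, and the remainder of your proof then goes through as written.
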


Of course this means that the homology and
cohomology of a $\kO$-module are naturally
$\StA(1)$-modules and those of a $\kU$-module
are $\StE(1)$-modules.

The $\kO$-module $H=H\F_2$ can be given
a minimal cell structure with~$8$ cells
corresonding to the obvious basis of
$\StA(1)$-module $\StA(1)$.
\begin{center}
\begin{tikzpicture}
\Vertex[x=2,y=6,color=black,size=.05,label={6},position=right,distance=0.5mm]{A6}
\Vertex[x=2,y=5,color=black,size=.1,label={5},position=left,distance=0.5mm]{A5}
\Vertex[x=2,y=4,size=.1,color=black,label={4},position=right,distance=0.5mm]{A4}
\Vertex[y=3,size=.1,color=black,label={3},position=left,distance=0.5mm]{A31}
\Vertex[x=2,y=3,size=.1,color=black,label={3},position=right,distance=0.5mm]{A32}
\Vertex[y=2,size=.1,color=black,label={2},position=left,distance=0.5mm]{A2}
\Vertex[y=1,size=.1,color=black,label={1},position=right,distance=0.5mm]{A1}
\Vertex[y=0,size=.1,color=black,label={0},position=right,distance=0.5mm]{A0}
\Edge[lw=0.75pt](A5)(A6)
\Edge[lw=0.75pt,bend=-45](A4)(A6)
\Edge[lw=0.75pt](A32)(A4)
\Edge[lw=0.75pt,bend=15](A31)(A5)
\Edge[lw=0.75pt](A2)(A31)
\Edge[lw=0.75pt,bend=15](A2)(A4)
\Edge[lw=0.75pt,bend=15](A1)(A32)
\Edge[lw=0.75pt,label={$\Sq^1$},position=right](A0)(A1)
\Edge[lw=0.75pt,label={$\Sq^2$},position=left,bend=45](A0)(A2)
\end{tikzpicture}
\end{center}
Since $H$ is a $\kO$ ring spectrum, by
Proposition~\ref{prop:PD-HA} it is stably
self dual and $D_\kO H\sim\Sigma^{-6}H$.

When working in the stable module category
$\StMod_{\StA(1)}$ we will often blur the
distinction between a module and its stable
equivalence class. We denote the kernel of
the counit $\epsilon\colon\StA(1)\to\F_2$
by $\mathcal{I}(1)$; this module is stably
invertible with stable inverse
$\mathcal{I}(1)^{-1}=\mathrm{D}\mathcal{I}(1)$,
represented by the linear dual of~$\mathcal{I}(1)$.

\begin{equation}\label{eq:I(1)&DI(1)}
\begin{tikzpicture}
\Text[x=1,y=5.5]{$\mathcal{I}(1)$}
\Vertex[x=2,y=6,color=black,size=.05,label={6},position=right,distance=0.5mm]{A6}
\Vertex[x=2,y=5,color=black,size=.1,label={5},position=left,distance=0.5mm]{A5}
\Vertex[x=2,y=4,size=.1,color=black,label={4},position=right,distance=0.5mm]{A4}
\Vertex[y=3,size=.1,color=black,label={3},position=left,distance=0.5mm]{A31}
\Vertex[x=2,y=3,size=.1,color=black,label={3},position=right,distance=0.5mm]{A32}
\Vertex[y=2,size=.1,color=black,label={2},position=left,distance=0.5mm]{A2}
\Vertex[y=1,size=.1,color=black,label={1},position=right,distance=0.5mm]{A1}
\Edge[lw=0.75pt](A5)(A6)
\Edge[lw=0.75pt,label={$\Sq^2$},position=right,bend=-45](A4)(A6)
\Edge[lw=0.75pt](A32)(A4)
\Edge[lw=0.75pt,bend=15](A31)(A5)
\Edge[lw=0.75pt,label={$\Sq^1$},position=left](A2)(A31)
\Edge[lw=0.75pt,bend=15](A2)(A4)
\Edge[lw=0.75pt,bend=15](A1)(A32)
\end{tikzpicture}
\qquad\qquad\qquad
\begin{tikzpicture}
\Text[x=0.4,y=4.5]{$\mathcal{I}(1)^{-1}$}
\Vertex[x=2,y=5,color=black,size=.1,label={-1},position=right,distance=0.5mm]{A5}
\Vertex[x=2,y=4,size=.1,color=black,label={-2},position=right,distance=0.5mm]{A4}
\Vertex[y=3,size=.1,color=black,label={-3},position=left,distance=0.5mm]{A31}
\Vertex[x=2,y=3,size=.1,color=black,label={-3},position=right,distance=0.5mm]{A32}
\Vertex[y=2,size=.1,color=black,label={-4},position=left,distance=0.5mm]{A2}
\Vertex[y=1,size=.1,color=black,label={-5},position=right,distance=0.5mm]{A1}
\Vertex[y=0,size=.1,color=black,label={-6},position=right,distance=0.5mm]{A0}
\Edge[lw=0.75pt](A32)(A4)
\Edge[lw=0.75pt,bend=15](A31)(A5)
\Edge[lw=0.75pt](A2)(A31)
\Edge[lw=0.75pt,bend=15](A2)(A4)
\Edge[lw=0.75pt,bend=15](A1)(A32)
\Edge[lw=0.75pt](A0)(A1)
\Edge[lw=0.75pt,bend=45](A0)(A2)
\end{tikzpicture}
\end{equation}

Recall that there are two mutually inverse
endofunctors $\Omega,\Omega^{-1}$ of
$\StMod_{\StA(1)}$, where
\[
\Omega M = \mathcal{I}(1)\odot M,
\quad
\Omega^{-1} M = \mathcal{I}(1)^{-1}\odot M.
\]
These commute with products and preserve duals, i.e.,
\begin{align*}
(\Omega M)\odot N &= \Omega(M\odot N),
& (\Omega^{-1} M)\odot N &= \Omega^{-1}(M\odot N),  \\
\Omega \mathrm{D}M &= \mathrm{D}\Omega M,
& \Omega^{-1} \mathrm{D}M &= \mathrm{D}\Omega^{-1} M.
\end{align*}

Amongst the many cyclic quotient $\StA(1)$-modules
is the Joker module
\begin{equation}\label{eq:Joker}
\begin{tikzpicture}[scale=0.9]
\Text[x=-4,y=0]{$\J = \StA(1)/\StA(1)\{\Sq^3\}[-2]$}
\Vertex[x=1,y=3,size=.05,color=black,label={\tiny$2$},position=right,distance=1mm]{B2}
\Vertex[x=1,y=1.5,size=.05,color=black]{B1}
\Vertex[x=1,y=0,size=.05,color=black,label={\tiny$0$},position=right,distance=1mm]{B0}
\Vertex[x=1,y=-1.5,size=.05,color=black]{B-1}
\Vertex[x=1,y=-3,size=.05,color=black,label={\tiny$-2$},position=right,distance=1mm]{B-2}
\Edge[lw=0.75pt,bend=-45,position=right](B0)(B2)
\Edge[lw=0.75pt](B1)(B2)
\Edge[lw=0.75pt,bend=45,label={$\Sq^2$},position=left](B-1)(B1)
\Edge[lw=0.75pt,label={$\Sq^1$},position=left](B-2)(B-1)
\Edge[lw=0.75pt,bend=-45](B-2)(B0)
\end{tikzpicture}
\end{equation}
whose grading is arranged so that it is
a self dual $\StA(1)$-module, indeed it
represents the unique element of order~$2$
in the Picard group of~$\StMod_{\StA(1)}$,
$\Pic_{\StA(1)}$.

In $\StMod_{\StA(1)}$ we have the following
presentations as quotients of free modules:
\begin{align*}
\Omega\J
&= \StA(1)/\StA(1)\{\Sq^1,\Sq^2\Sq^1\Sq^2\}[1]
= \StA(1)/\StA(1)\{\Sq^1,\Sq^2\Sq^3\}[1], \\
\Omega^{-1}\J
&= \StA(1)/\StA(1)\{\Sq^2\}[-4], \\
\Omega^2\J
&=
(\StA(1)/\StA(1)\{\Sq^1\}[2] \oplus
\StA(1)/\StA(1)\{\Sq^1,\Sq^2\Sq^1\Sq^2\}[6])
/\StA(1)\{(\Sq^2\Sq^1\Sq^2,\Sq^1)\}, \\
\Omega^{-2}\J
&= \StA(1)/\StA(1)\{\Sq^2\Sq^1\}[-7].
\end{align*}
These modules have the following diagrams
(the first two are often called the
\emph{question mark} and \emph{upside down
question mark} modules).
\begin{equation}\label{eq:Omegas}
\begin{tikzpicture}[scale=.9]
\Text[x=0,y=1]{$\Omega\mathcal{J}$}
\Vertex[x=0,y=0,size=.05,color=black,label={4},position=left,distance=0.5mm]{A4}
\Vertex[x=0,y=-1,size=.05,color=black,label={3},position=left,distance=0.5mm]{A3}
\Vertex[x=0,y=-3,size=.05,color=black,label={1},position=left,distance=0.5mm]{A1}
\Edge[lw=0.75pt](A3)(A4)
\Edge[lw=0.75pt,bend=-45](A1)(A3)
\Text[x=2,y=1]{$\Omega^{-1}\mathcal{J}$}
\Vertex[x=2,y=0,size=.05,color=black,label={-1},position=left,distance=0.5mm]{B-1}
\Vertex[x=2,y=-2,size=.05,color=black,label={-3},position=left,distance=0.5mm]{B-3}
\Vertex[x=2,y=-3,size=.05,color=black,label={-4},position=left,distance=0.5mm]{B-4}
\Edge[lw=0.75pt,bend=-45](B-3)(B-1)
\Edge[lw=0.75pt](B-4)(B-3)
\Text[x=4,y=1]{$\Omega^2\mathcal{J}$}
\Vertex[x=4,y=0,size=.05,color=black,label={7},position=left,distance=0.5mm]{C7}
\Vertex[x=4,y=-1,size=.05,color=black,label={6},position=left,distance=0.5mm]{C6}
\Vertex[x=4,y=-2,size=.05,color=black,label={5},position=left,distance=0.5mm]{C5}
\Vertex[x=4,y=-3,size=.05,color=black,label={4},position=left,distance=0.5mm]{C4}
\Vertex[x=4,y=-5,size=.05,color=black,label={2},position=left,distance=0.5mm]{C2}
\Edge[lw=0.75pt](C6)(C7)
\Edge[lw=0.75pt,bend=-45](C5)(C7)
\Edge[lw=0.75pt](C4)(C5)
\Edge[lw=0.75pt,bend=-45](C2)(C4)
\Text[x=6,y=1]{$\Omega^{-2}\J$}
\Vertex[x=6,y=0,size=.05,color=black,label={-2},position=left,distance=0.5mm]{D-2}
\Vertex[x=6,y=-2,size=.05,color=black,label={-4},position=left,distance=0.5mm]{D-4}
\Vertex[x=6,y=-3,size=.05,color=black,label={-5},position=left,distance=0.5mm]{D-5}
\Vertex[x=6,y=-4,size=.05,color=black,label={-6},position=left,distance=0.5mm]{D-6}
\Vertex[x=6,y=-5,size=.05,color=black,label={-7},position=left,distance=0.5mm]{D-7}
\Edge[lw=0.75pt,bend=-45](D-4)(D-2)
\Edge[lw=0.75pt](D-5)(D-4)
\Edge[lw=0.75pt,bend=-45](D-7)(D-5)
\Edge[lw=0.75pt](D-7)(D-6)
\end{tikzpicture}
\end{equation}

We recall the structure of $\Ext_{\StA(1)}(\F_2,\F_2)$,
the Adams $\mathrm{E}_2$-term for~$\pi_*\kO$,
part of which appears in Figure~\ref{fig:kO}.
We also have
\begin{equation}\label{eq:ExtI(1)}
\Ext_{\StA(1)}^{s,*}(\mathcal{I}(1),\F_2)
\iso
\Ext_{\StA(1)}^{s+1,*}(\F_2,\F_2),
\end{equation}
whose chart is a shifted version of
Figure~\ref{fig:kO} with the~$\bullet$
at~$(0,0)$ removed. For the dual
$\mathrm{D}\mathcal{I}(1)$,
\begin{equation}\label{eq:ExtDI(1)}
\Ext_{\StA(1)}^{s,*}(\mathrm{D}\mathcal{I}(1),\F_2)
\iso
\begin{dcases*}
\F_2 & if $s=0$, \\
\Ext_{\StA(1)}^{s-1,*}(\F_2,\F_2) & if $s>0$.
\end{dcases*}
\end{equation}
Adams \& Priddy \cite{JFA&SBP:BSO}*{tables~3.10 \&~3.11}
give the $\Ext$ groups of the two question
mark modules $\Omega\J$ and $\Omega^{-1}\J$.

Such $\StA(1)$-modules can often be realised
as cohomology of $\kO$-modules, i.e., as
$H_{\kO}^*X$ for some $\kO$-module~$X$.
Indeed, in some cases~$X=\kO\wedge Z$
for an $S$-module~$Z$. For example,
there are $S$-modules
$S^0\cup_\eta e^2\cup_2 e^3$ and
$S^0\cup_2 e^1\cup_\eta e^3$ so that as
$\StA(1)$-modules,
\begin{align*}
H_{\kO}^*(\kO\wedge S^0\cup_\eta e^2\cup_2 e^3)
&\iso H^*(S^0\cup_\eta e^2\cup_2 e^3)
\iso \Omega\mathcal{J}[-1], \\
H_{\kO}^*(\kO\wedge S^0\cup_2 e^1\cup_\eta e^3)
&\iso H^*(S^0\cup_2 e^1\cup_\eta e^3)
\iso \Omega^{-1}\mathcal{J}[4].
\end{align*}
In~\cite{AB:Joker}, the Joker was shown to be
realisable as $H^*J$ for an $S$-module~$J$,
so as $\StA(1)$-modules,
$H_{\kO}^*(\kO\wedge J) \iso H^*J$. The
realisability of the self dual cyclic
$\StA(1)$-module $\StA(1)/\!/\StA(0)$ provides
a more interesting question.

\begin{equation}\label{eq:HZ}
\begin{tikzpicture}
\Text[x=-2.5,y=1.875]{$\StA(1)/\!/\StA(0)=\StA(1)/\StA(1)\{\Sq^1\}$}
\Vertex[x=1,y=3.75,size=.05,color=black,label={$5$},position=right,distance=0.1mm]{B5}
\Vertex[x=1,y=2.25,size=.05,color=black]{B3}
\Vertex[x=1,y=1.5,size=.05,color=black]{B2}
\Vertex[x=1,y=0,size=.05,color=black,label={$0$},position=right,distance=0.1mm]{B0}
\Edge[lw=0.75pt,bend=-45](B3)(B5)
\Edge[lw=0.75pt,bend=-45,label={$\Sq^2$},position=right](B0)(B2)
\Edge[lw=0.75pt,label={$\Sq^1$},position=right](B2)(B3)
\end{tikzpicture}
\end{equation}

In $\mathscr{M}_S$, no CW complex of the
form $S^0\cup_\eta e^2\cup_2e^3\cup_\eta e^5$
can exist since the Toda bracket
$\langle\eta,2,\eta\rangle=\{2\nu,-2\nu\}\subseteq\pi_3S^0$
does not contain~$0$; alternatively, its
existence is contradicted by the relation
$\Sq^1\Sq^2\Sq^1=\Sq^4\Sq^1+\Sq^1\Sq^4$
in the Steenrod algebra $\StA$. However,
taken in~$\pi_*\kO$, the Toda bracket
$\langle\eta,2,\eta\rangle\subseteq\pi_3\kO$
contains the images of $\pm2\nu$ which are
both zero, hence there is a CW $\kO$-module
with this cohomology. Of course~$H\Z$ is a
$\kO$-module whose cohomology agrees with
this $\StA(1)$-module, so we are describing
a minimal CW structure for it. Since~$H\Z$
is a $\kO$ ring spectrum and $H^{\kO}_*H\Z$
satisfies Poincar\'e duality,
Proposition~\ref{prop:PD-HA} applies. Hence
the $\kO$-module~$H\Z$ is stably self dual
with $D_\kO H\Z \sim \Sigma^{-5}H\Z$.

The $\kO$-modules discussed above fit into
the Whitehead tower of~$\kO$ shown
in~\eqref{eq:BottCW} as CW $\kO$-modules,
where the numbers indicate degrees of cells
in a minimal CW structure; we learnt about
the following from Bob Bruner and John Rognes,
see for example
\cites{RRB:PostnikovkOkU,RRB:Picard,RRB&JPCG}.
On applying $H_{\kO}^*(-)$ we obtain the
$\StA(1)$-module extension of~\eqref{eq:BottCW-H^*}
and~\eqref{eq:Bottseq} which represents an
element of $\Ext_{\StA(1)}^{4,12}(\F_2,\F_2)$,
and in turn this represents the Bott periodicity
element in~$\pi_8\kO$ in the Adams spectral
sequence. This periodicity is also visible
in Figure~\ref{fig:kO} which shows part of
$\Ext_{\StA(1)}^{*,*}(\F_2,\F_2)$ where it
is given by Yoneda product with the basis
element of
$\Ext_{\StA(1)}^{8,4}(\F_2,\F_2)$.
\begin{equation}\label{eq:BottCW}
\begin{tikzpicture}[scale=0.8]
\Vertex[x=0.1,y=0,color=white,size=.05]{F0}
\Vertex[x=0.2,y=0,color=black,size=.05,Pseudo]{F01}
\Vertex[x=2,y=0,color=black,size=.05,label={$0$},position=right,distance=1mm]{A0}
\Vertex[x=2,y=2,color=white,size=.05]{A4}
\Vertex[x=2,y=3,color=white,size=.05]{A6}
\Vertex[x=2,y=5,color=white,size=.05]{A10}
\Edge[lw=0.75pt,bend=50](A0)(A4)
\Edge[lw=0.75pt](A4)(A6)
\Edge[lw=0.75pt,bend=50](A6)(A10)
\Vertex[x=5,y=2,color=black,size=.05,label={$2$},position=right,distance=1mm]{B4}
\Vertex[x=5,y=3,color=black,size=.05]{B6}
\Vertex[x=5,y=4,color=white,size=.05]{B8}
\Vertex[x=5,y=5,color=white,size=.05]{B10}
\Vertex[x=6,y=5,color=black,size=.05]{B10-2}
\Vertex[x=6,y=6,color=white,size=.05]{B12}
\Vertex[x=6,y=7,color=white,size=.05]{B14}
\Vertex[x=6,y=8,color=white,size=.05]{B16}
\Edge[lw=0.75pt](B4)(B6)
\Edge[lw=0.75pt,bend=50,position=right](B4)(B8)
\Edge[lw=0.75pt](B6)(B10-2)
\Edge[lw=0.75pt](B8)(B10)
\Edge[lw=0.75pt](B8)(B12)
\Edge[lw=0.75pt](B10-2)(B12)
\Edge[lw=0.75pt](B10)(B14)
\Edge[lw=0.75pt](B14)(B16)
\Edge[lw=0.75pt,bend=-50](B12)(B16)
\Vertex[x=9,y=4,color=black,size=.05,label={$4$},position=right,distance=1mm]{C8}
\Vertex[x=9,y=5,color=black,size=.05]{C10}
\Vertex[x=9,y=6,color=black,size=.05]{C12}
\Vertex[x=9,y=7,color=white,size=.05]{C14}
\Vertex[x=10,y=7,color=black,size=.05]{C14-2}
\Vertex[x=10,y=8,color=black,size=.05]{C16}
\Vertex[x=10,y=9,color=white,size=.05]{C18}
\Vertex[x=10,y=10,color=white,size=.05]{C20}
\Edge[lw=0.75pt](C8)(C10)
\Edge[lw=0.75pt,bend=50](C8)(C12)
\Edge[lw=0.75pt](C10)(C14-2)
\Edge[lw=0.75pt](C12)(C14)
\Edge[lw=0.75pt](C12)(C16)
\Edge[lw=0.75pt](C14)(C18)
\Edge[lw=0.75pt](C14-2)(C16)
\Edge[lw=0.75pt,bend=-50](C16)(C20)
\Edge[lw=0.75pt](C18)(C20)
\Vertex[x=13,y=7,color=black,size=.05,label={$7$},position=right,distance=1mm]{D14}
\Vertex[x=13,y=9,color=black,size=.05]{D18}
\Vertex[x=13,y=10,color=black,size=.05]{D20}
\Vertex[x=13,y=12,color=white,size=.05,label={$12$},position=left,distance=1mm]{D24}
\Edge[lw=0.75pt,bend=-50](D14)(D18)
\Edge[lw=0.75pt](D18)(D20)
\Edge[lw=0.75pt,bend=-50](D20)(D24)
\Vertex[x=1.8,y=0,Pseudo]{F02}
\Edge[lw=0.75pt,Direct](F01)(F02)
\Vertex[x=2.2,y=2,Pseudo]{A01}
\Vertex[x=4.8,y=2,Pseudo]{A02}
\Edge[lw=0.75pt,Direct](A01)(A02)
\Vertex[x=6.2,y=4,Pseudo]{B01}
\Vertex[x=8.8,y=4,Pseudo]{B02}
\Edge[lw=0.75pt,Direct](B01)(B02)
\Vertex[x=10.2,y=7,Pseudo]{C01}
\Vertex[x=12.8,y=7,Pseudo]{C02}
\Edge[lw=0.75pt,Direct](C01)(C02)
\Vertex[x=13.2,y=12,Pseudo]{F24-1}
\Vertex[x=15,y=12,Pseudo]{F24-2}
\Edge[lw=0.75pt,Direct](F24-1)(F24-2)
\Vertex[x=15.2,y=12,color=black,size=.05]{F12}
\end{tikzpicture}
\end{equation}

\begin{equation}\label{eq:BottCW-H^*}
\begin{tikzpicture}[scale=0.8]
\Vertex[x=0.1,y=0,color=white,size=.05]{F0}
\Vertex[x=0.2,y=0,color=black,size=.05,Pseudo]{F01}
\Vertex[x=2,y=0,color=black,size=.05,label={$0$},position=right,distance=1mm]{A0}
\Vertex[x=2,y=2,color=white,size=.05]{A4}
\Vertex[x=2,y=3,color=white,size=.05]{A6}
\Vertex[x=2,y=5,color=white,size=.05]{A10}
\Edge[lw=0.75pt,bend=50,label={$\Sq^2$},position=left](A0)(A4)
\Edge[lw=0.75pt,label={$\Sq^1$},position=left](A4)(A6)
\Edge[lw=0.75pt,bend=50](A6)(A10)
\Vertex[x=5,y=2,color=black,size=.05,label={$2$},position=right,distance=1mm]{B4}
\Vertex[x=5,y=3,color=black,size=.05]{B6}
\Vertex[x=5,y=4,color=white,size=.05]{B8}
\Vertex[x=5,y=5,color=white,size=.05]{B10}
\Vertex[x=6,y=5,color=black,size=.05]{B10-2}
\Vertex[x=6,y=6,color=white,size=.05]{B12}
\Vertex[x=6,y=7,color=white,size=.05]{B14}
\Vertex[x=6,y=8,color=white,size=.05]{B16}
\Edge[lw=0.75pt](B4)(B6)
\Edge[lw=0.75pt,bend=50,position=right](B4)(B8)
\Edge[lw=0.75pt](B6)(B10-2)
\Edge[lw=0.75pt](B8)(B10)
\Edge[lw=0.75pt](B8)(B12)
\Edge[lw=0.75pt](B10-2)(B12)
\Edge[lw=0.75pt](B10)(B14)
\Edge[lw=0.75pt](B14)(B16)
\Edge[lw=0.75pt,bend=-50](B12)(B16)
\Vertex[x=9,y=4,color=black,size=.05,label={$4$},position=right,distance=1mm]{C8}
\Vertex[x=9,y=5,color=black,size=.05]{C10}
\Vertex[x=9,y=6,color=black,size=.05]{C12}
\Vertex[x=9,y=7,color=white,size=.05]{C14}
\Vertex[x=10,y=7,color=black,size=.05]{C14-2}
\Vertex[x=10,y=8,color=black,size=.05]{C16}
\Vertex[x=10,y=9,color=white,size=.05]{C18}
\Vertex[x=10,y=10,color=white,size=.05]{C20}
\Edge[lw=0.75pt](C8)(C10)
\Edge[lw=0.75pt,bend=50](C8)(C12)
\Edge[lw=0.75pt](C10)(C14-2)
\Edge[lw=0.75pt](C12)(C14)
\Edge[lw=0.75pt](C12)(C16)
\Edge[lw=0.75pt](C14)(C18)
\Edge[lw=0.75pt](C14-2)(C16)
\Edge[lw=0.75pt,bend=-50](C16)(C20)
\Edge[lw=0.75pt](C18)(C20)
\Vertex[x=13,y=7,color=black,size=.05,label={$7$},position=right,distance=1mm]{D14}
\Vertex[x=13,y=9,color=black,size=.05]{D18}
\Vertex[x=13,y=10,color=black,size=.05]{D20}
\Vertex[x=13,y=12,color=white,size=.05,label={$12$},position=left,distance=1mm]{D24}
\Edge[lw=0.75pt,bend=-50](D14)(D18)
\Edge[lw=0.75pt](D18)(D20)
\Edge[lw=0.75pt,bend=-50](D20)(D24)
\Vertex[x=1.8,y=0,Pseudo]{F02}
\Edge[lw=0.75pt,Direct](F02)(F01)
\Vertex[x=2.2,y=2,Pseudo]{A01}
\Vertex[x=4.8,y=2,Pseudo]{A02}
\Edge[lw=0.75pt,Direct](A02)(A01)
\Vertex[x=6.2,y=4,Pseudo]{B01}
\Vertex[x=8.8,y=4,Pseudo]{B02}
\Edge[lw=0.75pt,Direct](B02)(B01)
\Vertex[x=10.2,y=7,Pseudo]{C01}
\Vertex[x=12.8,y=7,Pseudo]{C02}
\Edge[lw=0.75pt,Direct](C02)(C01)
\Vertex[x=13.2,y=12,Pseudo]{F24-1}
\Vertex[x=15,y=12,Pseudo]{F24-2}
\Edge[lw=0.75pt,Direct](F24-2)(F24-1)
\Vertex[x=15.2,y=12,color=black,size=.05]{F12}
\end{tikzpicture}
\end{equation}
\begin{equation}\label{eq:Bottseq}
0\la \F_2 \la \StA(1)/\StA(1)\{\Sq^1\}
\la \StA(1)[2] \la \StA(1)[4]
\la \StA(1)/\StA(1)\{\Sq^1\}[7]
\la \F_2[12]\la 0
\end{equation}

\begin{figure}[ht]
\begin{tikzpicture}[scale=.70]

\clip (-1.5,-0.5) rectangle ( 20.3, 12.50);
\draw[color=lightgray] (0,0) grid [step=4] (20.3,12.5);

\foreach \n in {0,4,...,20}
{
\def\nn{\n-0};
\node [below] at (\nn,0) {{\tiny$\n$}};
}

\foreach \s in {0,4,...,12}
{
\def\ss{\s-0};
\node [left] at (-0.1,\ss,0) {{\tiny$\s$}};
}

\draw [] ( 0.00, 0.00) circle [radius=0.075];
\node [left] at  ( 0.00, 0.00) {\ph{0}};

\draw [fill] ( 0.00, 1.00) circle [radius=0.05];
\node [left] at  ( 0.00, 1.00) {\ph{0}};

\draw [fill] ( 1.00, 1.00) circle [radius=0.05];
\node [left] at  ( 1.00, 1.00) {\ph{1}};

\draw [fill] ( 0.00, 2.00) circle [radius=0.05];
\node [left] at  ( 0.00, 2.00) {\ph{0}};

\draw [fill] ( 2.00, 2.00) circle [radius=0.05];
\node [left] at  ( 2.00, 2.00) {\ph{1}};

\draw [fill] ( 0.00, 3.00) circle [radius=0.05];
\node [left] at  ( 0.00, 3.00) {\ph{0}};

\draw [fill] ( 4.00, 3.00) circle [radius=0.05];
\node [left] at  ( 4.00, 3.00) {\ph{1}};

\draw [fill] ( 0.00, 4.00) circle [radius=0.05];
\node [left] at  ( 0.00, 4.00) {\ph{0}};

\draw [fill] ( 4.00, 4.00) circle [radius=0.05];
\node [left] at  ( 4.00, 4.00) {\ph{1}};

\draw [fill] ( 8.00, 4.00) circle [radius=0.05];
\node [left] at  ( 8.00, 4.00) {\ph{2}};

\draw [fill] ( 0.00, 5.00) circle [radius=0.05];
\node [left] at  ( 0.00, 5.00) {\ph{0}};

\draw [fill] ( 4.00, 5.00) circle [radius=0.05];
\node [left] at  ( 4.00, 5.00) {\ph{1}};

\draw [fill] ( 8.00, 5.00) circle [radius=0.05];
\node [left] at  ( 8.00, 5.00) {\ph{2}};

\draw [fill] ( 9.00, 5.00) circle [radius=0.05];
\node [left] at  ( 9.00, 5.00) {\ph{3}};

\draw [fill] ( 0.00, 6.00) circle [radius=0.05];
\node [left] at  ( 0.00, 6.00) {\ph{0}};

\draw [fill] ( 4.00, 6.00) circle [radius=0.05];
\node [left] at  ( 4.00, 6.00) {\ph{1}};

\draw [fill] ( 8.00, 6.00) circle [radius=0.05];
\node [left] at  ( 8.00, 6.00) {\ph{2}};

\draw [fill] (10.00, 6.00) circle [radius=0.05];
\node [left] at  (10.00, 6.00) {\ph{3}};

\draw [fill] ( 0.00, 7.00) circle [radius=0.05];
\node [left] at  ( 0.00, 7.00) {\ph{0}};

\draw [fill] ( 4.00, 7.00) circle [radius=0.05];
\node [left] at  ( 4.00, 7.00) {\ph{1}};

\draw [fill] ( 8.00, 7.00) circle [radius=0.05];
\node [left] at  ( 8.00, 7.00) {\ph{2}};

\draw [fill] (12.00, 7.00) circle [radius=0.05];
\node [left] at  (12.00, 7.00) {\ph{3}};

\draw [fill] ( 0.00, 8.00) circle [radius=0.05];
\node [left] at  ( 0.00, 8.00) {\ph{0}};

\draw [fill] ( 4.00, 8.00) circle [radius=0.05];
\node [left] at  ( 4.00, 8.00) {\ph{1}};

\draw [fill] ( 8.00, 8.00) circle [radius=0.05];
\node [left] at  ( 8.00, 8.00) {\ph{2}};

\draw [fill] (12.00, 8.00) circle [radius=0.05];
\node [left] at  (12.00, 8.00) {\ph{3}};

\draw [fill] (16.00, 8.00) circle [radius=0.05];
\node [left] at  (16.00, 8.00) {\ph{4}};

\draw [fill] ( 0.00, 9.00) circle [radius=0.05];
\node [left] at  ( 0.00, 9.00) {\ph{0}};

\draw [fill] ( 4.00, 9.00) circle [radius=0.05];
\node [left] at  ( 4.00, 9.00) {\ph{1}};

\draw [fill] ( 8.00, 9.00) circle [radius=0.05];
\node [left] at  ( 8.00, 9.00) {\ph{2}};

\draw [fill] (12.00, 9.00) circle [radius=0.05];
\node [left] at  (12.00, 9.00) {\ph{3}};

\draw [fill] (16.00, 9.00) circle [radius=0.05];
\node [left] at  (16.00, 9.00) {\ph{4}};

\draw [fill] (17.00, 9.00) circle [radius=0.05];
\node [left] at  (17.00, 9.00) {\ph{5}};

\draw [fill] ( 0.00,10.00) circle [radius=0.05];
\node [left] at  ( 0.00,10.00) {\ph{0}};

\draw [fill] ( 4.00,10.00) circle [radius=0.05];
\node [left] at  ( 4.00,10.00) {\ph{1}};

\draw [fill] ( 8.00,10.00) circle [radius=0.05];
\node [left] at  ( 8.00,10.00) {\ph{2}};

\draw [fill] (12.00,10.00) circle [radius=0.05];
\node [left] at  (12.00,10.00) {\ph{3}};

\draw [fill] (16.00,10.00) circle [radius=0.05];
\node [left] at  (16.00,10.00) {\ph{4}};

\draw [fill] (18.00,10.00) circle [radius=0.05];
\node [left] at  (18.00,10.00) {\ph{5}};

\draw [fill] ( 0.00,11.00) circle [radius=0.05];
\node [left] at  ( 0.00,11.00) {\ph{0}};

\draw [fill] ( 4.00,11.00) circle [radius=0.05];
\node [left] at  ( 4.00,11.00) {\ph{1}};

\draw [fill] ( 8.00,11.00) circle [radius=0.05];
\node [left] at  ( 8.00,11.00) {\ph{2}};

\draw [fill] (12.00,11.00) circle [radius=0.05];
\node [left] at  (12.00,11.00) {\ph{3}};

\draw [fill] (16.00,11.00) circle [radius=0.05];
\node [left] at  (16.00,11.00) {\ph{4}};

\draw [fill] (20.00,11.00) circle [radius=0.05];
\node [left] at  (20.00,11.00) {\ph{5}};

\draw [fill] ( 0.00,12.00) circle [radius=0.05];
\node [left] at  ( 0.00,12.00) {\ph{0}};

\draw [fill] ( 4.00,12.00) circle [radius=0.05];
\node [left] at  ( 4.00,12.00) {\ph{1}};

\draw [fill] ( 8.00,12.00) circle [radius=0.05];
\node [left] at  ( 8.00,12.00) {\ph{2}};

\draw [fill] (12.00,12.00) circle [radius=0.05];
\node [left] at  (12.00,12.00) {\ph{3}};

\draw [fill] (16.00,12.00) circle [radius=0.05];
\node [left] at  (16.00,12.00) {\ph{4}};

\draw [fill] (20.00,12.00) circle [radius=0.05];
\node [left] at  (20.00,12.00) {\ph{5}};

\draw [fill] (24.00,12.00) circle [radius=0.05];
\node [left] at  (24.00,12.00) {\ph{6}};

\draw [fill] ( 0.00,13.00) circle [radius=0.05];
\node [left] at  ( 0.00,13.00) {\ph{0}};

\draw [fill] ( 4.00,13.00) circle [radius=0.05];
\node [left] at  ( 4.00,13.00) {\ph{1}};

\draw [fill] ( 8.00,13.00) circle [radius=0.05];
\node [left] at  ( 8.00,13.00) {\ph{2}};

\draw [fill] (12.00,13.00) circle [radius=0.05];
\node [left] at  (12.00,13.00) {\ph{3}};

\draw [fill] (16.00,13.00) circle [radius=0.05];
\node [left] at  (16.00,13.00) {\ph{4}};

\draw [fill] (20.00,13.00) circle [radius=0.05];
\node [left] at  (20.00,13.00) {\ph{5}};

\draw [fill] (24.00,13.00) circle [radius=0.05];
\node [left] at  (24.00,13.00) {\ph{6}};

\draw [fill] (25.00,13.00) circle [radius=0.05];
\node [left] at  (25.00,13.00) {\ph{7}};

\draw [fill] ( 0.00,14.00) circle [radius=0.05];
\node [left] at  ( 0.00,14.00) {\ph{0}};

\draw [fill] ( 4.00,14.00) circle [radius=0.05];
\node [left] at  ( 4.00,14.00) {\ph{1}};

\draw [fill] ( 8.00,14.00) circle [radius=0.05];
\node [left] at  ( 8.00,14.00) {\ph{2}};

\draw [fill] (12.00,14.00) circle [radius=0.05];
\node [left] at  (12.00,14.00) {\ph{3}};

\draw [fill] (16.00,14.00) circle [radius=0.05];
\node [left] at  (16.00,14.00) {\ph{4}};

\draw [fill] (20.00,14.00) circle [radius=0.05];
\node [left] at  (20.00,14.00) {\ph{5}};

\draw [fill] (24.00,14.00) circle [radius=0.05];
\node [left] at  (24.00,14.00) {\ph{6}};

\draw [fill] (26.00,14.00) circle [radius=0.05];
\node [left] at  (26.00,14.00) {\ph{7}};

\draw [fill] ( 0.00,15.00) circle [radius=0.05];
\node [left] at  ( 0.00,15.00) {\ph{0}};

\draw [fill] ( 4.00,15.00) circle [radius=0.05];
\node [left] at  ( 4.00,15.00) {\ph{1}};

\draw [fill] ( 8.00,15.00) circle [radius=0.05];
\node [left] at  ( 8.00,15.00) {\ph{2}};

\draw [fill] (12.00,15.00) circle [radius=0.05];
\node [left] at  (12.00,15.00) {\ph{3}};

\draw [fill] (16.00,15.00) circle [radius=0.05];
\node [left] at  (16.00,15.00) {\ph{4}};

\draw [fill] (20.00,15.00) circle [radius=0.05];
\node [left] at  (20.00,15.00) {\ph{5}};

\draw [fill] (24.00,15.00) circle [radius=0.05];
\node [left] at  (24.00,15.00) {\ph{6}};

\draw [fill] (28.00,15.00) circle [radius=0.05];
\node [left] at  (28.00,15.00) {\ph{7}};

\draw ( 0.00, 1.00) --( 0.00, 0.00);
\draw ( 1.00, 1.00) --( 0.00, 0.00);
\draw ( 0.00, 2.00) --( 0.00, 1.00);
\draw ( 2.00, 2.00) --( 1.00, 1.00);
\draw ( 0.00, 3.00) --( 0.00, 2.00);
\draw ( 0.00, 4.00) --( 0.00, 3.00);
\draw ( 4.00, 4.00) --( 4.00, 3.00);
\draw ( 0.00, 5.00) --( 0.00, 4.00);
\draw ( 4.00, 5.00) --( 4.00, 4.00);
\draw ( 8.00, 5.00) --( 8.00, 4.00);
\draw ( 9.00, 5.00) --( 8.00, 4.00);
\draw ( 0.00, 6.00) --( 0.00, 5.00);
\draw ( 4.00, 6.00) --( 4.00, 5.00);
\draw ( 8.00, 6.00) --( 8.00, 5.00);
\draw (10.00, 6.00) --( 9.00, 5.00);
\draw ( 0.00, 7.00) --( 0.00, 6.00);
\draw ( 4.00, 7.00) --( 4.00, 6.00);
\draw ( 8.00, 7.00) --( 8.00, 6.00);
\draw ( 0.00, 8.00) --( 0.00, 7.00);
\draw ( 4.00, 8.00) --( 4.00, 7.00);
\draw ( 8.00, 8.00) --( 8.00, 7.00);
\draw (12.00, 8.00) --(12.00, 7.00);
\draw ( 0.00, 9.00) --( 0.00, 8.00);
\draw ( 4.00, 9.00) --( 4.00, 8.00);
\draw ( 8.00, 9.00) --( 8.00, 8.00);
\draw (12.00, 9.00) --(12.00, 8.00);
\draw (16.00, 9.00) --(16.00, 8.00);
\draw (17.00, 9.00) --(16.00, 8.00);
\draw ( 0.00,10.00) --( 0.00, 9.00);
\draw ( 4.00,10.00) --( 4.00, 9.00);
\draw ( 8.00,10.00) --( 8.00, 9.00);
\draw (12.00,10.00) --(12.00, 9.00);
\draw (16.00,10.00) --(16.00, 9.00);
\draw (18.00,10.00) --(17.00, 9.00);
\draw ( 0.00,11.00) --( 0.00,10.00);
\draw ( 4.00,11.00) --( 4.00,10.00);
\draw ( 8.00,11.00) --( 8.00,10.00);
\draw (12.00,11.00) --(12.00,10.00);
\draw (16.00,11.00) --(16.00,10.00);
\draw ( 0.00,12.00) --( 0.00,11.00);
\draw ( 4.00,12.00) --( 4.00,11.00);
\draw ( 8.00,12.00) --( 8.00,11.00);
\draw (12.00,12.00) --(12.00,11.00);
\draw (16.00,12.00) --(16.00,11.00);
\draw (20.00,12.00) --(20.00,11.00);
\draw ( 0.00,13.00) --( 0.00,12.00);
\draw ( 4.00,13.00) --( 4.00,12.00);
\draw ( 8.00,13.00) --( 8.00,12.00);
\draw (12.00,13.00) --(12.00,12.00);
\draw (16.00,13.00) --(16.00,12.00);
\draw (20.00,13.00) --(20.00,12.00);
\draw (24.00,13.00) --(24.00,12.00);
\draw (25.00,13.00) --(24.00,12.00);
\draw ( 0.00,14.00) --( 0.00,13.00);
\draw ( 4.00,14.00) --( 4.00,13.00);
\draw ( 8.00,14.00) --( 8.00,13.00);
\draw (12.00,14.00) --(12.00,13.00);
\draw (16.00,14.00) --(16.00,13.00);
\draw (20.00,14.00) --(20.00,13.00);
\draw (24.00,14.00) --(24.00,13.00);
\draw (26.00,14.00) --(25.00,13.00);
\draw ( 0.00,15.00) --( 0.00,14.00);
\draw ( 4.00,15.00) --( 4.00,14.00);
\draw ( 8.00,15.00) --( 8.00,14.00);
\draw (12.00,15.00) --(12.00,14.00);
\draw (16.00,15.00) --(16.00,14.00);
\draw (20.00,15.00) --(20.00,14.00);
\draw (24.00,15.00) --(24.00,14.00);
\draw ( 0.00,16.00) --( 0.00,15.00);
\draw ( 4.00,16.00) --( 4.00,15.00);
\draw ( 8.00,16.00) --( 8.00,15.00);
\draw (12.00,16.00) --(12.00,15.00);
\draw (16.00,16.00) --(16.00,15.00);
\draw (20.00,16.00) --(20.00,15.00);
\draw (24.00,16.00) --(24.00,15.00);
\draw (28.00,16.00) --(28.00,15.00);

\node [right] at (17,-0.35,0) {{$t-s\to$}};
\node [left] at (-0.1,10,0) {{$s\uparrow$}};
\end{tikzpicture}
\caption{$\Ext_{\StA(1)}^{s,t}(\F_2,\F_2)$:
for $0 \leq t-s \leq 20$, $0 \leq s \leq 12$.
Removing the generator at $(0,0)$ and shifting
down and left gives $\Ext_{\StA(1)}(\mathcal{I}(1),\F_2)$.}
\label{fig:kO}
\end{figure}

The cofibres of the maps in~\eqref{eq:BottCW}
have the following cell structures
\begin{equation}\label{eq:Bottcofibres}
\begin{tikzpicture}
\Vertex[x=-3,y=0,size=.05,color=white]{A0}
\Vertex[x=-3,y=1,size=.05,color=white]{A1}
\Vertex[x=-3,y=3,size=.05,color=white]{A3}
\Edge[lw=0.75pt,label={$2$},position=left](A0)(A1)
\Edge[lw=0.75pt,bend=50,label={$\eta$},position=left](A1)(A3)
\Vertex[x=0,y=0,size=.05,color=white]{B0}
\Vertex[x=0,y=1,size=.05,color=white]{B1}
\Vertex[x=0,y=2,size=.05,color=white]{B2}
\Vertex[x=0,y=3,size=.05,color=white]{B3}
\Vertex[x=0,y=4,size=.05,color=white]{B4}
\Edge[lw=0.75pt](B0)(B1)
\Edge[lw=0.75pt](B3)(B4)
\Edge[lw=0.75pt,bend=-50](B0)(B2)
\Edge[lw=0.75pt,bend=50](B1)(B3)
\Edge[lw=0.75pt,bend=-50](B2)(B4)
\Vertex[x=3,y=0,size=.05,color=white]{C0}
\Vertex[x=3,y=2,size=.05,color=white]{C2}
\Vertex[x=3,y=3,size=.05,color=white]{C3}
\Edge[lw=0.75pt,label={$2$},position=right](C2)(C3)
\Edge[lw=0.75pt,bend=-50,label={$\eta$},position=right](C0)(C2)
\end{tikzpicture}
\end{equation}
and their cohomologies are the following
cyclic $\StA(1)$-modules.
\begin{equation}\label{eq:Bottcofibres-H^*}
\begin{tikzpicture}
\Text[x=-9.2,y=2]{$\StA(1)/\StA(1)\{\Sq^2\}$}
\Vertex[x=-7,y=0,size=.05,color=black]{A0}
\Vertex[x=-7,y=1,size=.05,color=black]{A1}
\Vertex[x=-7,y=3,size=.05,color=black]{A3}
\Edge[lw=0.75pt](A0)(A1)
\Edge[lw=0.75pt,bend=50](A1)(A3)
\Text[x=-4.5,y=2]{$\StA(1)/\StA(1)\{\Sq^1\Sq^2\}$}
\Vertex[x=-2,y=0,size=.05,color=black]{B0}
\Vertex[x=-2,y=1,size=.05,color=black]{B1}
\Vertex[x=-2,y=2,size=.05,color=black]{B2}
\Vertex[x=-2,y=3,size=.05,color=black]{B3}
\Vertex[x=-2,y=4,size=.05,color=black]{B4}
\Edge[lw=0.75pt,label={$\Sq^1$},position=left](B0)(B1)
\Edge[lw=0.75pt](B3)(B4)
\Edge[lw=0.75pt,bend=-50,label={$\Sq^2$},position=right](B0)(B2)
\Edge[lw=0.75pt,bend=50](B1)(B3)
\Edge[lw=0.75pt,bend=-50](B2)(B4)
\Text[x=1.5,y=2]{$\StA(1)/\StA(1)\{\Sq^1,\Sq^2\Sq^1\Sq^2\}$}
\Vertex[x=4.3,y=0,size=.05,color=black]{C0}
\Vertex[x=4.3,y=2,size=.05,color=black]{C2}
\Vertex[x=4.3,y=3,size=.05,color=black]{C3}
\Edge[lw=0.75pt](C2)(C3)
\Edge[lw=0.75pt,bend=-50](C0)(C2)
\end{tikzpicture}
\end{equation}
In fact the whole sequence~\eqref{eq:BottCW}
can be made to be Spanier-Whitehead self dual
in the sense that applying $\Sigma^{12}D_{\kO}$
gives an equivalent sequence.

\subsection*{Realisation of cyclic $\StA(1)$-modules}
There are of course other cyclic quotients
of~$\StA(1)$ which can be realised as cohomology
of $\kO$-modules. These fall into two groups,
those containing a submodule isomorphic to
the quotient $\StA(1)/\!/\StA(0)$ and those
which do not. See Example~\ref{examp:H->kO}
for the first one, the others involving
$\StA(1)/\!/\StA(0)$ are easily realised.
The others can all be constructed as
$\kO$-modules of the form $\kO\wedge Z$.

\bigskip
\begin{center}
\begin{tikzpicture}
\Text[x=-3.5,y=2.5]{$\StA(1)/\StA(1)\{\Sq^1\Sq^2\Sq^1\Sq^2\}$}
\Vertex[y=5,color=black,size=.1]{A5}
\Vertex[x=1,y=4,size=.1,color=black]{A4}
\Vertex[y=3,size=.1,color=black,label={3}]{A31}
\Vertex[x=1,y=3,size=.1,color=black]{A32}
\Vertex[y=2,size=.1,color=black]{A2}
\Vertex[y=1,size=.1,color=black]{A1}
\Vertex[y=0,size=.1,color=black]{A0}
\Edge[lw=0.75pt](A32)(A4)
\Edge[lw=0.75pt,bend=45](A31)(A5)
\Edge[lw=0.75pt](A2)(A31)
\Edge[lw=0.75pt](A2)(A4)
\Edge[lw=0.75pt,label={$\Sq^2$},position=right](A1)(A32)
\Edge[lw=0.75pt,label={$\Sq^1$},position=right](A0)(A1)
\Edge[lw=0.75pt,bend=45](A0)(A2)
\end{tikzpicture}
\qquad
\begin{tikzpicture}
\Text[x=-3.0,y=2.5]{$\StA(1)/\StA(1)\{\Sq^1\Sq^2\Sq^1\}$}
\Vertex[y=5,color=black,size=.1]{A5}
\Vertex[y=3,size=.1,color=black,label={3}]{A31}
\Vertex[x=1,y=3,size=.1,color=black]{A32}
\Vertex[y=2,size=.1,color=black]{A2}
\Vertex[y=1,size=.1,color=black]{A1}
\Vertex[y=0,size=.1,color=black]{A0}
\Edge[lw=0.75pt,bend=45](A31)(A5)
\Edge[lw=0.75pt](A2)(A31)
\Edge[lw=0.75pt,bend=-45](A1)(A32)
\Edge[lw=0.75pt](A0)(A1)
\Edge[lw=0.75pt,bend=45](A0)(A2)
\end{tikzpicture}
\end{center}

\bigskip
\begin{center}
\begin{tikzpicture}
\Text[x=-3.0,y=2.5]{$\StA(1)/\StA(1)\{\Sq^2\Sq^1\}$}
\Vertex[y=5,color=black,size=.1]{A5}
\Vertex[y=3,size=.1,color=black,label={3}]{A31}
\Vertex[y=2,size=.1,color=black]{A2}
\Vertex[y=1,size=.1,color=black]{A1}
\Vertex[y=0,size=.1,color=black]{A0}
\Edge[lw=0.75pt,bend=45](A31)(A5)
\Edge[lw=0.75pt](A2)(A31)
\Edge[lw=0.75pt](A0)(A1)
\Edge[lw=0.75pt,bend=45](A0)(A2)
\end{tikzpicture}
\end{center}

\bigskip
\begin{center}
\begin{tikzpicture}
\Text[x=-3.5,y=2]{$\StA(1)/\StA(1)\{\Sq^2\Sq^1\Sq^2\}$}
\Vertex[x=0,y=4,size=.1,color=black]{A4}
\Vertex[x=-1,y=3,size=.1,color=black]{A31}
\Vertex[x=0,y=3,size=.1,color=black]{A32}
\Vertex[y=2,size=.1,color=black]{A2}
\Vertex[y=1,size=.1,color=black]{A1}
\Vertex[y=0,size=.1,color=black]{A0}
\Edge[lw=0.75pt](A32)(A4)
\Edge[lw=0.75pt](A2)(A31)
\Edge[lw=0.75pt,bend=45](A2)(A4)
\Edge[lw=0.75pt,bend=-45](A1)(A32)
\Edge[lw=0.75pt](A0)(A1)
\Edge[lw=0.75pt,bend=45](A0)(A2)
\end{tikzpicture}
\end{center}
\bigskip
\begin{center}
\begin{tikzpicture}
\Text[x=-8.0,y=1.5]{$\StA(1)/\StA(1)\{\Sq^1\Sq^2\Sq^1,\Sq^2\Sq^1\Sq^2\}$}
\Vertex[x=-4,y=3,size=.1,color=black,label={3}]{A31}
\Vertex[x=-3,y=3,size=.1,color=black]{A32}
\Vertex[x=-4,y=2,size=.1,color=black]{A2}
\Vertex[x=-4,y=1,size=.1,color=black]{A1}
\Vertex[x=-4,y=0,size=.1,color=black]{A0}
\Edge[lw=0.75pt](A2)(A31)
\Edge[lw=0.75pt,bend=-45](A1)(A32)
\Edge[lw=0.75pt](A0)(A1)
\Edge[lw=0.75pt,bend=45](A0)(A2)
\Text[x=0.5,y=1.5]{$\StA(1)/\StA(1)\{\Sq^1\Sq^2,\Sq^1\Sq^2\Sq^1\}$}
\Vertex[x=4,y=3,size=.1,color=black,label={3}]{B31}
\Vertex[x=4,y=2,size=.1,color=black]{B2}
\Vertex[x=4,y=1,size=.1,color=black]{B1}
\Vertex[x=4,y=0,size=.1,color=black]{B0}
\Edge[lw=0.75pt,bend=-45](B1)(B31)
\Edge[lw=0.75pt](B0)(B1)
\Edge[lw=0.75pt,bend=45](B0)(B2)
\end{tikzpicture}
\end{center}

\bigskip
\begin{center}
\begin{tikzpicture}
\Text[x=-8.0,y=1.5]{$\StA(1)/\StA(1)\{\Sq^2\Sq^1,\Sq^2\Sq^1\Sq^2\}$}
\Vertex[x=-4,y=3,size=.1,color=black,label={3}]{A31}
\Vertex[x=-4,y=2,size=.1,color=black]{A2}
\Vertex[x=-4,y=1,size=.1,color=black]{A1}
\Vertex[x=-4,y=0,size=.1,color=black]{A0}
\Edge[lw=0.75pt](A2)(A31)
\Edge[lw=0.75pt](A0)(A1)
\Edge[lw=0.75pt,bend=45](A0)(A2)
\Text[x=0.5,y=1.5]{$\StA(1)/\StA(1)\{\Sq^1\Sq^2,\Sq^2\Sq^1\}$}
\Vertex[x=4,y=2,size=.1,color=black]{B2}
\Vertex[x=4,y=1,size=.1,color=black]{B1}
\Vertex[x=4,y=0,size=.1,color=black]{B0}
\Edge[lw=0.75pt](B0)(B1)
\Edge[lw=0.75pt,bend=45](B0)(B2)
\end{tikzpicture}
\end{center}
\bigskip
\begin{center}
\begin{tikzpicture}
\Text[x=-3.75,y=2]{$\StA(1)/\!/\F_2(\mathrm{P}_1^1)
=\StA(1)/\StA(1)\{\mathrm{P}_1^1\}$}
\Vertex[x=0,y=3.5,size=.1,color=black]{A3}
\Vertex[y=2.5,size=.1,color=black]{A2}
\Vertex[y=1.5,size=.1,color=black]{A1}
\Vertex[y=0.5,size=.1,color=black]{A0}
\Edge[lw=0.75pt](A2)(A3)
\Edge[lw=0.75pt,bend=-45](A1)(A3)
\Edge[lw=0.75pt](A0)(A1)
\Edge[lw=0.75pt,bend=45](A0)(A2)
\end{tikzpicture}
\end{center}

\bigskip
\begin{center}
\begin{tikzpicture}
\Text[x=-4.5,y=1.0]{$\StA(1)/\!/\StE(1)
=\StA(1)/\StA(1)\{\Sq^1,\mathrm{P}_1^1\}$}
\Vertex[y=2.0,size=.1,color=black]{A2}
\Vertex[y=0,size=.1,color=black]{A0}
\Edge[lw=0.75pt,label={$\Sq^2$},position=left,bend=45](A0)(A2)
\end{tikzpicture}
\qquad
\begin{tikzpicture}
\Text[x=-2.5,y=1]{$\StA(1)/\StA(1)\{\Sq^2,\mathrm{P}_1^1\}$}
\Vertex[y=1.5,size=.1,color=black]{A1}
\Vertex[y=0.5,size=.1,color=black]{A0}
\Vertex[y=0,size=.1,Pseudo]{U}
\Edge[lw=0.75pt,label={$\Sq^1$},position=left](A0)(A1)
\end{tikzpicture}
\end{center}

Each of the last three has the form
$\StA(1)\otimes_{\mathcal{B}}\F_2$ for
some subalgebra $\mathcal{B}\subseteq\StA(1)$.
For each of the first two, $\mathcal{B}$
is a subHopf algebra which explains the
self duality. In the last case,
\[
\mathcal{B}=\F_2(\Sq^2,\mathrm{P}_1^1)
          =\F_2(\Sq^2,\Sq^1\Sq^2+\Sq^2\Sq^1)
\]
is a $6$-dimensional commutative subalgebra,
but even so $\StA(1)\otimes_{\mathcal{B}}\F_2$
is a stably self dual $\StA(1)$-module. Here
$\StA(1)$ is not a free right $\mathcal{B}$-module
since for example
\[
\Sq^1\mathrm{P}_1^1 = 1(\Sq^2\Sq^2).
\]

The self dual cyclic module
$\StA(1)/\!/\F_2(\mathrm{P}_1^1)
= \StA(1)/\StA(1)\{\mathrm{P}_1^1\}$ is a quotient
Hopf algebra since $\mathrm{P}_1^1$ is central
in~$\StA(1)$. This module can be realised, see
Example~\ref{examp:A1modP1} for details.

\subsection*{Constructions of some more
$\kO$-modules}

Collapsing $H\Z$ onto its top cell gives
a map $H\Z\to S_\kO^5$ and composing with
a map of degree~$2$ to the bottom cell of
$\Sigma^5H\Z$ gives a $\kO$-module map
$H\Z\to\Sigma^5H\Z$ whose mapping cone
has cohomology as shown.
\begin{center}
\begin{tikzpicture}
\Vertex[x=1,y=5,size=.05,color=black]{B10}
\Vertex[x=1,y=4,size=.05,color=black]{B8}
\Vertex[x=1,y=3.5,size=.05,color=black]{B7}
\Vertex[x=1,y=3,size=.05,color=black]{B6}
\Vertex[x=1,y=2.5,size=.05,color=black]{B5}
\Vertex[x=1,y=2,size=.05,color=black]{B4}
\Vertex[x=1,y=1.5,size=.05,color=black]{B3}
\Vertex[x=1,y=0.5,size=.05,color=black]{B1}
\Text[x=1,y=2.5,position=right,distance=1mm]{\tiny$5$}
\Text[x=1,y=0.5,position=right,distance=1mm]{\tiny$1$}
\Edge[lw=0.75pt,bend=-45](B8)(B10)
\Edge[lw=0.75pt](B7)(B8)
\Edge[lw=0.75pt,bend=-45](B5)(B7)
\Edge[lw=0.75pt](B5)(B6)
\Edge[lw=0.75pt,bend=45](B4)(B6)
\Edge[lw=0.75pt](B3)(B4)
\Edge[lw=0.75pt,bend=45](B1)(B3)
\end{tikzpicture}
\end{center}

We can repeat this using a suitable
map from $\Sigma^{-4}H\Z$ to this by
using a degree~$2$ map to the bottom
cell and so on. Repeatedly using such
constructions and their Spanier-Whitehead
duals leads to a `daisy chain' of copies
of suspensions of $\StA(1)/\!/\StA(0)$
glued together by actions of~$\Sq^1$.
Such an $\StA(1)$-module is realised
as the cohomology of a $\kO$-module.
This process can either be stopped after
finitely many iterations or continued
indefinitely in either positive or
negative directions. Of course these
$\StA(1)$-modules are well known.

In fact $H\Z$ can itself be realised
in a similar way. The $\kO$-module
$\kU$ has cohomology
\begin{equation}\label{eq:H*kU}
\begin{tikzpicture}
\Text[x=-4.6,y=0.5]{$H_\kO^*\kU=\StA(1)/\!/\StE(1)
=\StA(1)/\StA(1)\{\Sq^1,\Sq^1\Sq^2\}$}
\Vertex[x=1,y=1,size=.05,color=black]{B2}
\Vertex[x=1,y=0,size=.05,color=black]{B0}
\Edge[lw=0.75pt,bend=45,label={$\Sq^2$},position=left](B0)(B2)
\end{tikzpicture}
\end{equation}
so $D_\kO\kU\sim\Sigma^{-2}\kU$. A similiar
construction to the above yields a complex
whose cohomology has the following form.
\begin{center}
\begin{tikzpicture}
\Vertex[x=1,y=1.5,size=.05,color=black]{B3}
\Vertex[x=1,y=1,size=.05,color=black]{B2}
\Vertex[x=1,y=0.5,size=.05,color=black]{B1}
\Vertex[x=1,y=0,size=.05,color=black]{B0}
\Edge[lw=0.75pt,bend=-45](B1)(B3)
\Edge[lw=0.75pt](B1)(B2)
\Edge[lw=0.75pt,bend=45](B0)(B2)
\end{tikzpicture}
\end{center}
By iterating we can obtain familiar $\StA(1)$-modules
such as
\begin{center}
\begin{tikzpicture}
\Vertex[x=1,y=3.5,size=.05,color=black]{B7}
\Vertex[x=1,y=3,size=.05,color=black]{B6}
\Vertex[x=1,y=2.5,size=.05,color=black]{B5}
\Vertex[x=1,y=2,size=.05,color=black]{B4}
\Vertex[x=1,y=1.5,size=.05,color=black]{B3}
\Vertex[x=1,y=1,size=.05,color=black]{B2}
\Vertex[x=1,y=0.5,size=.05,color=black]{B1}
\Vertex[x=1,y=0,size=.05,color=black]{B0}
\Edge[lw=0.75pt,bend=-45](B5)(B7)
\Edge[lw=0.75pt](B5)(B6)
\Edge[lw=0.75pt,bend=45](B4)(B6)
\Edge[lw=0.75pt](B3)(B4)
\Edge[lw=0.75pt,bend=-45](B1)(B3)
\Edge[lw=0.75pt](B1)(B2)
\Edge[lw=0.75pt,bend=45](B0)(B2)
\end{tikzpicture}
\end{center}
which can also be extended both upwards
and downwards.

\subsection*{Some sample calculations and examples}

\begin{examp}\label{examp:H->HZ}
Consider the Adams spectral sequence
\[
\mathrm{E}_2^{s,t}
= \Ext_{\StA(1)}^{s,t}(H^*_\kO H\Z,H^*_\kO H)
\Lra \mathscr{D}_\kO(H,H\Z)^{s-t}.
\]
Since
\[
H^*_\kO H \iso \StA(1) \iso \mathrm{D}(\StA(1))[6]
\]
we have
\begin{align*}
\mathrm{E}_2^{*,*}
&\iso
\Ext_{\StA(1)}^{*,*}(H^*_\kO H\Z,\mathrm{D}(\StA(1))[6]) \\
&\iso
\Ext_{\StA(1)}^{*,*}(\StA(1)\otimes H^*_\kO H\Z,\F_2[6]) \\
&\iso
\Hom^*(H^*_\kO H\Z,\F_2[6]) \\
&\iso
\Hom^6(H^*_\kO H\Z,\F_2[6])
\iso \Hom(H^*_\kO H\Z,\F_2)
\iso \F_2
\end{align*}
where the generator is the dual map to
the non-zero element in $H_\kO^0H\Z$.
So $\mathrm{D}_\kO(H,H\Z)^*\iso\F_2[0]$
and the generator corresponds to the
map $H\to H\Z$ which collapses~$H$ onto
its top cell composed with inclusion of
the bottom cell of~$\Sigma^6H\Z$.

By Spanier-Whitehead duality,
\[
\mathscr{D}_\kO(H\Z,H)^*
\iso \mathscr{D}_\kO(\Sigma^{-6}H,\Sigma^{-5}H\Z)^*
\iso \mathscr{D}_\kO(H,\Sigma H\Z)^*
\iso \F_2[5].
\]
This time the generator involves collapse
of $H\Z$ onto its top cell composed with
inclusion of the bottom cell of~$\Sigma^5H$,
i.e., a composition
\[
H\Z \to S^5_\kO\to\Sigma^5H.
\]

The reader may like to compare and
contrast this calculation with that
using the classical Adams spectral
sequence
\[
\mathrm{E}_2^{s,t}
= \Ext_{\StA}^{s,t}(H^*H\Z,H^*H)
\Lra \mathscr{D}_S(H,H\Z).
\]
\end{examp}

\begin{examp}\label{examp:H->kO}
Consider $\mathscr{D}_\kO(H,\kO)^*$.
By Spanier-Whitehead duality this
is isomorphic to
\[
\mathscr{D}_\kO(\kO,\Sigma^{-6}H)^*
\iso
\mathscr{D}_\kO(\kO,H)^{*-6}.
\]
The Adams spectral sequence
\[
\mathrm{E}_2^{s,t}
= \Ext_{\StA(1)}^{s,t}(H^*_\kO H,\F_2)
\Lra \mathscr{D}_\kO(H,\kO)^{s-t}
\]
has
\[
\Ext_{\StA(1)}^{*,*}(H^*_\kO H,\F_2)
\iso
\Ext_{\StA(1)}^{*,*}(\StA(1),\F_2)
= \Hom(\F_2,\F_2) = \F_2
\]
whose generator detects the inclusion
of the bottom cell of~$H$ whose
Spanier-Whitehead dual is collapse onto
the top cell of~$H$, i.e., a map
$H\to S_{\kO}^6\sim\Sigma^6\kO$. The
fibre of this realises the cyclic
module
\[
\mathrm{D}\mathcal{I}(1)[5]
= \StA(1)/\StA(1)\{\Sq^1\Sq^2\Sq^1\Sq^2\},
\]
a suspension of the dual of the counit,
see~\eqref{eq:I(1)&DI(1)}.
\end{examp}

\begin{examp}\label{examp:Joker->Joker}
Recall the Joker module $\mathcal{J}$
of~\eqref{eq:Joker}. In~\cite{AB:Joker}
we showed that there is a spectrum~$J$
for which the $\kO$-module $\kO\wedge J$
has cohomology is
\[
H_\kO^*(\kO\wedge J) \iso H^*(J)
\iso \mathcal{J}
\]
as an $\StA(1)$-module. Actually there
are two inequivalent such spectra which
are Spanier-Whitehead dual and their
cohomology realises the $\StA$-modules
with the two possible~$\Sq^4$ actions.
For our present purposes we may choose~$J'$
and~$J''$ to be either of them.

First we will determine
$\mathscr{D}_\kO(\kO\wedge J',\kO\wedge J'')$.
Using duality we have
\begin{align*}
\mathscr{D}_\kO(\kO\wedge J',\kO\wedge J'')^*
&\iso
\mathscr{D}_\kO(\kO,(\kO\wedge DJ')\wedge_\kO(\kO\wedge J''))^*  \\
&\iso
\mathscr{D}_\kO(\kO,\kO\wedge DJ'\wedge J'')^*.
\end{align*}
Now $\mathcal{J}$ is stably self dual
and in fact as $\StA(1)$-modules,
\begin{equation}\label{eq:J*J}
\mathcal{J}\otimes\mathcal{J} \iso
\F_2[0] \oplus \StA(1)[-4]\oplus \StA(1)[-3]\oplus \StA(1)[-2].
\end{equation}
There is an Adams spectral sequence
\[
\mathrm{E}_2^{s,t}
= \Ext_{\StA(1)}(H^*_\kO(\kO\wedge DJ'\wedge J''),\F_2)
\Lra \mathscr{D}_\kO(\kO,\kO\wedge DJ'\wedge J'')^*
\]
and its $\mathrm{E}_2$-term is given
by
 \begin{align*}
\mathrm{E}_2^{*,*} &\iso
\Ext_{\StA(1)}^{*,*}(\F_2,\F_2)
\oplus \Hom^*(\F_2[-4],\F_2)
\oplus \Hom^*(\F_2[-3],\F_2)
\oplus \Hom^*(\F_2[-2],\F_2) \\
&\iso
\Ext_{\StA(1)}^{*,*}(\F_2,\F_2)
\oplus\F_2[4]\oplus\F_2[3]\oplus\F_2[2].
\end{align*}
The reader is invited to describe
the stable maps $\kO\wedge J'\to\kO\wedge J''$
corresponding to $\F_2$-summands
in cellular terms. The generator of
$\Ext_{\StA(1)}^{0,0}(\F_2,\F_2)$
is an infinite cycle in the spectral
sequence showing there is indeed a
weak equivalence of $\kO$-modules
$\kO\wedge J'\to\kO\wedge J''$, hence
the choices of spectra $J'$ and $J''$
do not affect the $\kO$-module up to
homotopy equivalence and from now on
we just write $\kO\wedge J$ for any
such $\kO$-module.

Notice that
$\Ext_{\StA(1)}^{1,2}(\F_2,\F_2)\iso\F_2$,
and this corresponds to the map
$\kO\wedge\Sigma J \to \kO\wedge J$
induced by multiplication by $\eta$.
This shows that the cofibre sequence
\[
\kO\wedge J
\to \kO\wedge C_\eta\wedge J
\to \kO\wedge\Sigma^2 J
\]
cannot split since the short exact
sequence
\[
\xymatrix{
0 & \ar[l]H_\kO^*(\kO\wedge J)\ar@{<->}[d]^\iso
& \ar[l]H_\kO^*(\kO\wedge C_\eta\wedge J)\ar@{<->}[d]^\iso
& \ar[l]H_\kO^*(\kO\wedge\Sigma^2 J)\ar@{<->}[d]^\iso
& \ar[l]0   \\
& H^*(J)
& H^*(C_\eta\wedge J)
& H^*(\Sigma^2 J)
&
}
\]
represents the corresponding element of
$\Ext_{\StA(1)}^{1,2}(\mathcal{J},\mathcal{J})
\iso
\Ext_{\StA(1)}^{1,2}(\mathcal{J}\otimes\mathcal{J},\F_2)$.
Of course this also implies the well-known
fact that the cofibre sequence of spectra
\[
J\to C_\eta\wedge J\to \Sigma^2 J
\]
does not split.
\end{examp}

\begin{examp}\label{examp:HZHZ}
Consider the $\kO$-module $H\Z\wedge_{\kO}H\Z$.
The cohomology of this is
\[
H_\kO^*(H\Z\wedge_{\kO}H\Z)
\iso H_\kO^*H\Z\odot H_\kO^*H\Z
\]
where the factors are given in~\eqref{eq:HZ}.
Since~$H\Z$ is a unital $\kO$-algebra, $H\Z$
is a retract.

Notice that $H\Z$ and hence $H\Z\wedge_{\kO}H\Z$
are Spanier-Whitehead stably self dual with
\[
D_\kO H\Z \sim \Sigma^{-5}H\Z,
\quad
D_\kO(H\Z\wedge_{\kO}H\Z)
      \sim \Sigma^{-10}H\Z\wedge_{\kO}H\Z.
\]
Moreover, $H_\kO^*H\Z\odot H_\kO^*H\Z$
must have $\StA(1)$-module summands $H_\kO^*H\Z$
and $H_\kO^*H\Z[5]$; in fact a routine
calculation shows that
\[
H_\kO^*H\Z\odot H_\kO^*H\Z
\iso
H_\kO^*H\Z \oplus \StA(1)[2]
           \oplus H_\kO^*H\Z[5].
\]
Now it is straightforward to show that
\[
H\Z\wedge_{\kO}H\Z \sim
 H\Z\vee \Sigma^2H\vee \Sigma^5H\Z.
\]
In particular it follows that $H\Z^{\kO}_*H\Z$
is not a free $\Z_{(2)}$-module; it is
known that $H\Z_*H\Z$ also has simple
$2$-torsion, see~\cite{SOK:HZ*HZ}. Of
course the $\Z_{(2)}$-algebra structures
of $H\Z^{\kO}_*H\Z$ and $H\Z_{\kO}^*H\Z$
are both trivial for degree reasons.
\end{examp}

In the planned sequel we will consider
the $\tmf$-algebras $H\Z\wedge_\tmf H\Z$
and $\kO\wedge_\tmf\kO$.

\begin{examp}\label{examp:A1modP1}
There is a CW complex~$Z$ such that
\[
H_\kO^*(\kO\wedge Z)\iso H^*Z
\iso \StA(1)/\StA(1)\{\mathrm{P}_1^1\},
\]
here is a bare handed cellular construction
illustrating some of the tools available.

Consider the complex~$Z'$ for which
$\pi_2(Z')\iso\Z\oplus\Z/2$.
\begin{center}
\begin{tikzpicture}
\Text[x=-2.0,y=1.0]{$Z'$}
\Vertex[y=2,size=.1,color=white]{A2}
\Vertex[y=1,size=.1,color=white]{A1}
\Vertex[y=0,size=.1,color=white]{A0}
\Edge[lw=0.75pt,label={$2$},position=right](A0)(A1)
\Edge[lw=0.75pt,label={$\eta$},position=left,bend=45](A0)(A2)
\end{tikzpicture}
\end{center}
Then $Z$ is obtained by attaching a $3$-cell
to~$Z'$ using the sum of the generators
of the group $\pi_2(Z')\iso\Z\oplus\Z/2$
(see Figure~\ref{fig:Ext(Z')}).

There is a map $Z\to H$ which extends to
a $\kO$-module morphism $\kO\wedge Z\to H$
inducing an epimorphism of $\StA(1)$-modules
making the following diagram commute.
\[
\xymatrix{
H_\kO^*H\ar@{->>}[r]\ar@{<->}[d]_{\iso}
& H_\kO^*(\kO\wedge Z)\ar@{<->}[d]^{\iso}\ar@{<->}[r]^(.65){\iso}
& H^*Z \\
\StA(1)\ar@{.>>}[r]^(.3){\mathrm{quo}}
& \StA(1)/\StA(1)\{\mathrm{P}_1^1\} &
}
\]
The kernel of the quotient homomorphism is
isomorphic to $\StA(1)/\StA(1)\{\mathrm{P}_1^1\}[3]$
and there is a cofibre sequence of $\kO$-modules
\[
\kO\wedge Z \to H \to \kO\wedge\Sigma^3Z.
\]
We can also splice together infinitely
many copies of the short exact sequence
\[
\xymatrix{
0 & \ar[l] H^*Z & \ar[l]\StA(1) & \ar[l]\StA(1)/\StA(1)\{\mathrm{P}_1^1\}[3] & \ar[l]0
}
\]
to obtain
\[
\xymatrix{
0 & \ar[l] H^*Z & \ar[l]\StA(1)
& \ar[l]\StA(1)/\StA(1)[3]
& \ar[l]\StA(1)/\StA(1)[6]
& \ar[l]\cdots
}
\]
which is a periodic resolution of $H^*Z$ by
cyclic free $\StA(1)$-modules. This can be
used to determine the Adams $\mathrm{E}_2$-term
for computing $\pi_*(\kO\wedge Z)$, see
Figure~\ref{fig:A1modP1}. The homotopy groups
of $\kO\wedge Z$ are given by
\[
\pi_*(\kO\wedge Z) = \F_2[u_2]
\]
where $u_2\in\pi_2(\kO\wedge Z)$ is represented
in the Adams spectral sequence by the element
of $\Ext_{\StA(1)}^{1,3}(H^*Z,\F_2)$ corresponding
to the algebraic extension
\[
0\to \F_2[3] \to \StA(1) \to H^*Z \to 0
\]
with non-trivial $\mathrm{P}_1^1$ action.

\begin{figure}[ht]
\begin{tikzpicture}[scale=0.8]

\clip (-1.5,-0.5) rectangle (12.50, 8.50);
\draw[color=lightgray] (0,0) grid [step=4] (12.5,8.5);

\foreach \n in {0,4,...,20}
{
\def\nn{\n-0};
\node [below] at (\nn,0) {{\tiny$\n$}};
}

\foreach \s in {0,4,...,12}
{
\def\ss{\s-0};
\node [left] at (-0.1,\ss,0) {{\tiny$\s$}};
}

\draw [fill] ( 0.00, 0.00) circle [radius=0.05];
\node [left] at  ( 0.00, 0.00) {\ph{0}};

\draw [fill] ( 2.07, 0.93) circle [radius=0.05];
\node [left] at  ( 2.07, 0.93) {\ph{0}};

\draw [fill] ( 1.93, 1.07) circle [radius=0.05];
\node [left] at  ( 1.93, 1.07) {\ph{1}};

\draw [fill] ( 3.00, 1.00) circle [radius=0.05];
\node [left] at  ( 3.00, 1.00) {\ph{2}};

\draw [fill] ( 5.00, 1.00) circle [radius=0.05];
\node [left] at  ( 5.00, 1.00) {\ph{3}};

\draw [fill] ( 7.00, 1.00) circle [radius=0.05];
\node [left] at  ( 7.00, 1.00) {\ph{4}};

\draw [fill] (15.00, 1.00) circle [radius=0.05];
\node [left] at  (15.00, 1.00) {\ph{5}};

\draw [fill] ( 2.00, 2.00) circle [radius=0.05];
\node [left] at  ( 2.00, 2.00) {\ph{0}};

\draw [fill] ( 3.00, 2.00) circle [radius=0.05];
\node [left] at  ( 3.00, 2.00) {\ph{1}};

\draw [fill] ( 4.00, 2.00) circle [radius=0.05];
\node [left] at  ( 4.00, 2.00) {\ph{2}};

\draw [fill] ( 5.00, 2.00) circle [radius=0.05];
\node [left] at  ( 5.00, 2.00) {\ph{3}};

\draw [fill] ( 6.00, 2.00) circle [radius=0.05];
\node [left] at  ( 6.00, 2.00) {\ph{4}};

\draw [fill] ( 7.00, 2.00) circle [radius=0.05];
\node [left] at  ( 7.00, 2.00) {\ph{5}};

\draw [fill] ( 8.00, 2.00) circle [radius=0.05];
\node [left] at  ( 8.00, 2.00) {\ph{6}};

\draw [fill] ( 9.07, 1.93) circle [radius=0.05];
\node [left] at  ( 9.07, 1.93) {\ph{7}};

\draw [fill] ( 8.93, 2.07) circle [radius=0.05];
\node [left] at  ( 8.93, 2.07) {\ph{8}};

\draw [fill] (14.00, 2.00) circle [radius=0.05];
\node [left] at  (14.00, 2.00) {\ph{9}};

\draw [fill] (16.00, 2.00) circle [radius=0.05];
\node [left] at  (16.00, 2.00) {\ph{10}};

\draw [fill] (17.07, 1.93) circle [radius=0.05];
\node [left] at  (17.07, 1.93) {\ph{11}};

\draw [fill] (16.93, 2.07) circle [radius=0.05];
\node [left] at  (16.93, 2.07) {\ph{12}};

\draw [fill] (18.00, 2.00) circle [radius=0.05];
\node [left] at  (18.00, 2.00) {\ph{13}};

\draw [fill] (20.00, 2.00) circle [radius=0.05];
\node [left] at  (20.00, 2.00) {\ph{14}};

\draw [fill] (30.00, 2.00) circle [radius=0.05];
\node [left] at  (30.00, 2.00) {\ph{15}};

\draw [fill] ( 2.00, 3.00) circle [radius=0.05];
\node [left] at  ( 2.00, 3.00) {\ph{0}};

\draw [fill] ( 4.00, 3.00) circle [radius=0.05];
\node [left] at  ( 4.00, 3.00) {\ph{1}};

\draw [fill] ( 5.00, 3.00) circle [radius=0.05];
\node [left] at  ( 5.00, 3.00) {\ph{2}};

\draw [fill] ( 8.00, 3.00) circle [radius=0.05];
\node [left] at  ( 8.00, 3.00) {\ph{3}};

\draw [fill] ( 9.07, 2.93) circle [radius=0.05];
\node [left] at  ( 9.07, 2.93) {\ph{4}};

\draw [fill] ( 8.93, 3.07) circle [radius=0.05];
\node [left] at  ( 8.93, 3.07) {\ph{5}};

\draw [fill] (10.00, 3.00) circle [radius=0.05];
\node [left] at  (10.00, 3.00) {\ph{6}};

\draw [fill] (11.00, 3.00) circle [radius=0.05];
\node [left] at  (11.00, 3.00) {\ph{7}};

\draw [fill] (15.00, 3.00) circle [radius=0.05];
\node [left] at  (15.00, 3.00) {\ph{8}};

\draw [fill] (16.00, 3.00) circle [radius=0.05];
\node [left] at  (16.00, 3.00) {\ph{9}};

\draw [fill] (17.00, 3.00) circle [radius=0.05];
\node [left] at  (17.00, 3.00) {\ph{10}};

\draw [fill] (18.00, 3.00) circle [radius=0.05];
\node [left] at  (18.00, 3.00) {\ph{11}};

\draw [fill] (19.07, 2.93) circle [radius=0.05];
\node [left] at  (19.07, 2.93) {\ph{12}};

\draw [fill] (18.93, 3.07) circle [radius=0.05];
\node [left] at  (18.93, 3.07) {\ph{13}};

\draw [fill] (20.07, 2.93) circle [radius=0.05];
\node [left] at  (20.07, 2.93) {\ph{14}};

\draw [fill] (19.93, 3.07) circle [radius=0.05];
\node [left] at  (19.93, 3.07) {\ph{15}};

\draw [fill] (21.07, 2.93) circle [radius=0.05];
\node [left] at  (21.07, 2.93) {\ph{16}};

\draw [fill] (20.93, 3.07) circle [radius=0.05];
\node [left] at  (20.93, 3.07) {\ph{17}};

\draw [fill] (22.00, 3.00) circle [radius=0.05];
\node [left] at  (22.00, 3.00) {\ph{18}};

\draw [fill] (23.00, 3.00) circle [radius=0.05];
\node [left] at  (23.00, 3.00) {\ph{19}};

\draw [fill] ( 2.00, 4.00) circle [radius=0.05];
\node [left] at  ( 2.00, 4.00) {\ph{0}};

\draw [fill] ( 8.00, 4.00) circle [radius=0.05];
\node [left] at  ( 8.00, 4.00) {\ph{1}};

\draw [fill] ( 9.00, 4.00) circle [radius=0.05];
\node [left] at  ( 9.00, 4.00) {\ph{2}};

\draw [fill] (10.00, 4.00) circle [radius=0.05];
\node [left] at  (10.00, 4.00) {\ph{3}};

\draw [fill] (11.00, 4.00) circle [radius=0.05];
\node [left] at  (11.00, 4.00) {\ph{4}};

\draw [fill] (14.00, 4.00) circle [radius=0.05];
\node [left] at  (14.00, 4.00) {\ph{5}};

\draw [fill] (17.07, 3.93) circle [radius=0.05];
\node [left] at  (17.07, 3.93) {\ph{6}};

\draw [fill] (16.93, 4.07) circle [radius=0.05];
\node [left] at  (16.93, 4.07) {\ph{7}};

\draw [fill] (18.00, 4.00) circle [radius=0.05];
\node [left] at  (18.00, 4.00) {\ph{8}};

\draw [fill] (19.07, 3.93) circle [radius=0.05];
\node [left] at  (19.07, 3.93) {\ph{9}};

\draw [fill] (18.93, 4.07) circle [radius=0.05];
\node [left] at  (18.93, 4.07) {\ph{10}};

\draw [fill] (20.14, 3.86) circle [radius=0.05];
\node [left] at  (20.14, 3.86) {\ph{11}};

\draw [fill] (20.00, 4.00) circle [radius=0.05];
\node [left] at  (20.00, 4.00) {\ph{12}};

\draw [fill] (19.86, 4.14) circle [radius=0.05];
\node [left] at  (19.86, 4.14) {\ph{13}};

\draw [fill] (22.00, 4.00) circle [radius=0.05];
\node [left] at  (22.00, 4.00) {\ph{14}};

\draw [fill] (23.07, 3.93) circle [radius=0.05];
\node [left] at  (23.07, 3.93) {\ph{15}};

\draw [fill] (22.93, 4.07) circle [radius=0.05];
\node [left] at  (22.93, 4.07) {\ph{16}};

\draw [fill] (24.07, 3.93) circle [radius=0.05];
\node [left] at  (24.07, 3.93) {\ph{17}};

\draw [fill] (23.93, 4.07) circle [radius=0.05];
\node [left] at  (23.93, 4.07) {\ph{18}};

\draw [fill] ( 2.00, 5.00) circle [radius=0.05];
\node [left] at  ( 2.00, 5.00) {\ph{0}};

\draw [fill] ( 9.00, 5.00) circle [radius=0.05];
\node [left] at  ( 9.00, 5.00) {\ph{1}};

\draw [fill] (10.00, 5.00) circle [radius=0.05];
\node [left] at  (10.00, 5.00) {\ph{2}};

\draw [fill] (11.00, 5.00) circle [radius=0.05];
\node [left] at  (11.00, 5.00) {\ph{3}};

\draw [fill] (13.00, 5.00) circle [radius=0.05];
\node [left] at  (13.00, 5.00) {\ph{4}};

\draw [fill] (16.07, 4.93) circle [radius=0.05];
\node [left] at  (16.07, 4.93) {\ph{5}};

\draw [fill] (15.93, 5.07) circle [radius=0.05];
\node [left] at  (15.93, 5.07) {\ph{6}};

\draw [fill] (17.00, 5.00) circle [radius=0.05];
\node [left] at  (17.00, 5.00) {\ph{7}};

\draw [fill] (19.07, 4.93) circle [radius=0.05];
\node [left] at  (19.07, 4.93) {\ph{8}};

\draw [fill] (18.93, 5.07) circle [radius=0.05];
\node [left] at  (18.93, 5.07) {\ph{9}};

\draw [fill] (20.00, 5.00) circle [radius=0.05];
\node [left] at  (20.00, 5.00) {\ph{10}};

\draw [fill] (22.07, 4.93) circle [radius=0.05];
\node [left] at  (22.07, 4.93) {\ph{11}};

\draw [fill] (21.93, 5.07) circle [radius=0.05];
\node [left] at  (21.93, 5.07) {\ph{12}};

\draw [fill] (23.07, 4.93) circle [radius=0.05];
\node [left] at  (23.07, 4.93) {\ph{13}};

\draw [fill] (22.93, 5.07) circle [radius=0.05];
\node [left] at  (22.93, 5.07) {\ph{14}};

\draw [fill] (25.07, 4.93) circle [radius=0.05];
\node [left] at  (25.07, 4.93) {\ph{15}};

\draw [fill] (24.93, 5.07) circle [radius=0.05];
\node [left] at  (24.93, 5.07) {\ph{16}};

\draw [fill] (26.00, 5.00) circle [radius=0.05];
\node [left] at  (26.00, 5.00) {\ph{17}};

\draw [fill] ( 2.00, 6.00) circle [radius=0.05];
\node [left] at  ( 2.00, 6.00) {\ph{0}};

\draw [fill] (11.00, 6.00) circle [radius=0.05];
\node [left] at  (11.00, 6.00) {\ph{1}};

\draw [fill] (12.00, 6.00) circle [radius=0.05];
\node [left] at  (12.00, 6.00) {\ph{2}};

\draw [fill] (13.00, 6.00) circle [radius=0.05];
\node [left] at  (13.00, 6.00) {\ph{3}};

\draw [fill] (15.00, 6.00) circle [radius=0.05];
\node [left] at  (15.00, 6.00) {\ph{4}};

\draw [fill] (16.00, 6.00) circle [radius=0.05];
\node [left] at  (16.00, 6.00) {\ph{5}};

\draw [fill] (17.07, 5.93) circle [radius=0.05];
\node [left] at  (17.07, 5.93) {\ph{6}};

\draw [fill] (16.93, 6.07) circle [radius=0.05];
\node [left] at  (16.93, 6.07) {\ph{7}};

\draw [fill] (18.00, 6.00) circle [radius=0.05];
\node [left] at  (18.00, 6.00) {\ph{8}};

\draw [fill] (19.00, 6.00) circle [radius=0.05];
\node [left] at  (19.00, 6.00) {\ph{9}};

\draw [fill] (21.00, 6.00) circle [radius=0.05];
\node [left] at  (21.00, 6.00) {\ph{10}};

\draw [fill] (22.00, 6.00) circle [radius=0.05];
\node [left] at  (22.00, 6.00) {\ph{11}};

\draw [fill] (24.00, 6.00) circle [radius=0.05];
\node [left] at  (24.00, 6.00) {\ph{12}};

\draw [fill] (25.00, 6.00) circle [radius=0.05];
\node [left] at  (25.00, 6.00) {\ph{13}};

\draw [fill] (26.00, 6.00) circle [radius=0.05];
\node [left] at  (26.00, 6.00) {\ph{14}};

\draw [fill] (27.00, 6.00) circle [radius=0.05];
\node [left] at  (27.00, 6.00) {\ph{15}};

\draw [fill] (28.00, 6.00) circle [radius=0.05];
\node [left] at  (28.00, 6.00) {\ph{16}};

\draw [fill] (30.00, 6.00) circle [radius=0.05];
\node [left] at  (30.00, 6.00) {\ph{17}};

\draw [fill] ( 2.00, 7.00) circle [radius=0.05];
\node [left] at  ( 2.00, 7.00) {\ph{0}};

\draw [fill] (12.00, 7.00) circle [radius=0.05];
\node [left] at  (12.00, 7.00) {\ph{1}};

\draw [fill] (13.00, 7.00) circle [radius=0.05];
\node [left] at  (13.00, 7.00) {\ph{2}};

\draw [fill] (16.00, 7.00) circle [radius=0.05];
\node [left] at  (16.00, 7.00) {\ph{3}};

\draw [fill] (17.07, 6.93) circle [radius=0.05];
\node [left] at  (17.07, 6.93) {\ph{4}};

\draw [fill] (16.93, 7.07) circle [radius=0.05];
\node [left] at  (16.93, 7.07) {\ph{5}};

\draw [fill] (18.00, 7.00) circle [radius=0.05];
\node [left] at  (18.00, 7.00) {\ph{6}};

\draw [fill] (19.00, 7.00) circle [radius=0.05];
\node [left] at  (19.00, 7.00) {\ph{7}};

\draw [fill] (23.00, 7.00) circle [radius=0.05];
\node [left] at  (23.00, 7.00) {\ph{8}};

\draw [fill] (25.00, 7.00) circle [radius=0.05];
\node [left] at  (25.00, 7.00) {\ph{9}};

\draw [fill] (26.00, 7.00) circle [radius=0.05];
\node [left] at  (26.00, 7.00) {\ph{10}};

\draw [fill] (28.00, 7.00) circle [radius=0.05];
\node [left] at  (28.00, 7.00) {\ph{11}};

\draw [fill] (29.00, 7.00) circle [radius=0.05];
\node [left] at  (29.00, 7.00) {\ph{12}};

\draw [fill] ( 2.00, 8.00) circle [radius=0.05];
\node [left] at  ( 2.00, 8.00) {\ph{0}};

\draw [fill] (16.00, 8.00) circle [radius=0.05];
\node [left] at  (16.00, 8.00) {\ph{1}};

\draw [fill] (17.00, 8.00) circle [radius=0.05];
\node [left] at  (17.00, 8.00) {\ph{2}};

\draw [fill] (18.00, 8.00) circle [radius=0.05];
\node [left] at  (18.00, 8.00) {\ph{3}};

\draw [fill] (19.00, 8.00) circle [radius=0.05];
\node [left] at  (19.00, 8.00) {\ph{4}};

\draw [fill] (22.00, 8.00) circle [radius=0.05];
\node [left] at  (22.00, 8.00) {\ph{5}};

\draw [fill] (25.07, 7.93) circle [radius=0.05];
\node [left] at  (25.07, 7.93) {\ph{6}};

\draw [fill] (24.93, 8.07) circle [radius=0.05];
\node [left] at  (24.93, 8.07) {\ph{7}};

\draw [fill] (27.00, 8.00) circle [radius=0.05];
\node [left] at  (27.00, 8.00) {\ph{8}};

\draw [fill] (28.07, 7.93) circle [radius=0.05];
\node [left] at  (28.07, 7.93) {\ph{9}};

\draw [fill] (27.93, 8.07) circle [radius=0.05];
\node [left] at  (27.93, 8.07) {\ph{10}};

\draw [fill] (30.00, 8.00) circle [radius=0.05];
\node [left] at  (30.00, 8.00) {\ph{11}};

\draw [fill] ( 2.00, 9.00) circle [radius=0.05];
\node [left] at  ( 2.00, 9.00) {\ph{0}};

\draw [fill] (17.00, 9.00) circle [radius=0.05];
\node [left] at  (17.00, 9.00) {\ph{1}};

\draw [fill] (18.00, 9.00) circle [radius=0.05];
\node [left] at  (18.00, 9.00) {\ph{2}};

\draw [fill] (19.00, 9.00) circle [radius=0.05];
\node [left] at  (19.00, 9.00) {\ph{3}};

\draw [fill] (21.00, 9.00) circle [radius=0.05];
\node [left] at  (21.00, 9.00) {\ph{4}};

\draw [fill] (24.07, 8.93) circle [radius=0.05];
\node [left] at  (24.07, 8.93) {\ph{5}};

\draw [fill] (23.93, 9.07) circle [radius=0.05];
\node [left] at  (23.93, 9.07) {\ph{6}};

\draw [fill] (25.00, 9.00) circle [radius=0.05];
\node [left] at  (25.00, 9.00) {\ph{7}};

\draw [fill] (27.07, 8.93) circle [radius=0.05];
\node [left] at  (27.07, 8.93) {\ph{8}};

\draw [fill] (26.93, 9.07) circle [radius=0.05];
\node [left] at  (26.93, 9.07) {\ph{9}};

\draw [fill] (28.00, 9.00) circle [radius=0.05];
\node [left] at  (28.00, 9.00) {\ph{10}};

\draw [fill] (30.07, 8.93) circle [radius=0.05];
\node [left] at  (30.07, 8.93) {\ph{11}};

\draw [fill] (29.93, 9.07) circle [radius=0.05];
\node [left] at  (29.93, 9.07) {\ph{12}};

\draw [fill] ( 2.00,10.00) circle [radius=0.05];
\node [left] at  ( 2.00,10.00) {\ph{0}};

\draw [fill] (19.00,10.00) circle [radius=0.05];
\node [left] at  (19.00,10.00) {\ph{1}};

\draw [fill] (20.00,10.00) circle [radius=0.05];
\node [left] at  (20.00,10.00) {\ph{2}};

\draw [fill] (21.00,10.00) circle [radius=0.05];
\node [left] at  (21.00,10.00) {\ph{3}};

\draw [fill] (23.00,10.00) circle [radius=0.05];
\node [left] at  (23.00,10.00) {\ph{4}};

\draw [fill] (24.00,10.00) circle [radius=0.05];
\node [left] at  (24.00,10.00) {\ph{5}};

\draw [fill] (25.07, 9.93) circle [radius=0.05];
\node [left] at  (25.07, 9.93) {\ph{6}};

\draw [fill] (24.93,10.07) circle [radius=0.05];
\node [left] at  (24.93,10.07) {\ph{7}};

\draw [fill] (26.00,10.00) circle [radius=0.05];
\node [left] at  (26.00,10.00) {\ph{8}};

\draw [fill] (27.00,10.00) circle [radius=0.05];
\node [left] at  (27.00,10.00) {\ph{9}};

\draw [fill] (29.00,10.00) circle [radius=0.05];
\node [left] at  (29.00,10.00) {\ph{10}};

\draw [fill] (30.00,10.00) circle [radius=0.05];
\node [left] at  (30.00,10.00) {\ph{11}};

\draw [fill] ( 2.00,11.00) circle [radius=0.05];
\node [left] at  ( 2.00,11.00) {\ph{0}};

\draw [fill] (20.00,11.00) circle [radius=0.05];
\node [left] at  (20.00,11.00) {\ph{1}};

\draw [fill] (21.00,11.00) circle [radius=0.05];
\node [left] at  (21.00,11.00) {\ph{2}};

\draw [fill] (24.00,11.00) circle [radius=0.05];
\node [left] at  (24.00,11.00) {\ph{3}};

\draw [fill] (25.07,10.93) circle [radius=0.05];
\node [left] at  (25.07,10.93) {\ph{4}};

\draw [fill] (24.93,11.07) circle [radius=0.05];
\node [left] at  (24.93,11.07) {\ph{5}};

\draw [fill] (26.00,11.00) circle [radius=0.05];
\node [left] at  (26.00,11.00) {\ph{6}};

\draw [fill] (27.00,11.00) circle [radius=0.05];
\node [left] at  (27.00,11.00) {\ph{7}};

\draw [fill] ( 2.00,12.00) circle [radius=0.05];
\node [left] at  ( 2.00,12.00) {\ph{0}};

\draw [fill] (24.00,12.00) circle [radius=0.05];
\node [left] at  (24.00,12.00) {\ph{1}};

\draw [fill] (25.00,12.00) circle [radius=0.05];
\node [left] at  (25.00,12.00) {\ph{2}};

\draw [fill] (26.00,12.00) circle [radius=0.05];
\node [left] at  (26.00,12.00) {\ph{3}};

\draw [fill] (27.00,12.00) circle [radius=0.05];
\node [left] at  (27.00,12.00) {\ph{4}};

\draw [fill] (30.00,12.00) circle [radius=0.05];
\node [left] at  (30.00,12.00) {\ph{5}};

\draw [fill] ( 2.00,13.00) circle [radius=0.05];
\node [left] at  ( 2.00,13.00) {\ph{0}};

\draw [fill] (25.00,13.00) circle [radius=0.05];
\node [left] at  (25.00,13.00) {\ph{1}};

\draw [fill] (26.00,13.00) circle [radius=0.05];
\node [left] at  (26.00,13.00) {\ph{2}};

\draw [fill] (27.00,13.00) circle [radius=0.05];
\node [left] at  (27.00,13.00) {\ph{3}};

\draw [fill] (29.00,13.00) circle [radius=0.05];
\node [left] at  (29.00,13.00) {\ph{4}};

\draw [fill] ( 2.00,14.00) circle [radius=0.05];
\node [left] at  ( 2.00,14.00) {\ph{0}};

\draw [fill] (27.00,14.00) circle [radius=0.05];
\node [left] at  (27.00,14.00) {\ph{1}};

\draw [fill] (28.00,14.00) circle [radius=0.05];
\node [left] at  (28.00,14.00) {\ph{2}};

\draw [fill] (29.00,14.00) circle [radius=0.05];
\node [left] at  (29.00,14.00) {\ph{3}};

\draw [fill] ( 2.00,15.00) circle [radius=0.05];
\node [left] at  ( 2.00,15.00) {\ph{0}};

\draw [fill] (28.00,15.00) circle [radius=0.05];
\node [left] at  (28.00,15.00) {\ph{1}};

\draw [fill] (29.00,15.00) circle [radius=0.05];
\node [left] at  (29.00,15.00) {\ph{2}};

\draw [fill] ( 2.00,16.00) circle [radius=0.05];
\node [left] at  ( 2.00,16.00) {\ph{0}};

\draw [fill] ( 2.00,17.00) circle [radius=0.05];
\node [left] at  ( 2.00,17.00) {\ph{0}};

\draw [fill] ( 2.00,18.00) circle [radius=0.05];
\node [left] at  ( 2.00,18.00) {\ph{0}};

\draw [fill] ( 2.00,19.00) circle [radius=0.05];
\node [left] at  ( 2.00,19.00) {\ph{0}};

\draw [fill] ( 2.00,20.00) circle [radius=0.05];
\node [left] at  ( 2.00,20.00) {\ph{0}};

\draw [dashed]  ( 3.00, 1.00) --( 0.00, 0.00);
\draw ( 2.00, 2.00) --( 2.07, 0.93);
\draw ( 3.00, 2.00) --( 2.07, 0.93);
\draw ( 3.00, 2.00) --( 1.93, 1.07);
\draw [dashed]  ( 5.00, 2.00) --( 2.07, 0.93);
\draw ( 5.00, 2.00) --( 5.00, 1.00);
\draw [dashed]  ( 6.00, 2.00) --( 3.00, 1.00);
\draw ( 6.00, 2.00) --( 5.00, 1.00);
\draw [dashed]  ( 8.00, 2.00) --( 5.00, 1.00);
\draw [dashed]  (18.00, 2.00) --(15.00, 1.00);
\draw ( 2.00, 3.00) --( 2.00, 2.00);
\draw ( 4.00, 3.00) --( 3.00, 2.00);
\draw ( 4.00, 3.00) --( 4.00, 2.00);
\draw [dashed]  ( 5.00, 3.00) --( 2.00, 2.00);
\draw ( 5.00, 3.00) --( 4.00, 2.00);
\draw ( 5.00, 3.00) --( 5.00, 2.00);
\draw [dashed]  ( 8.00, 3.00) --( 5.00, 2.00);
\draw ( 8.00, 3.00) --( 7.00, 2.00);
\draw ( 8.00, 3.00) --( 8.00, 2.00);
\draw ( 8.93, 3.07) --( 9.07, 1.93);
\draw [dashed]  (10.00, 3.00) --( 7.00, 2.00);
\draw (10.00, 3.00) --( 9.07, 1.93);
\draw (10.00, 3.00) --( 8.93, 2.07);
\draw [dashed]  (11.00, 3.00) --( 8.00, 2.00);
\draw (16.00, 3.00) --(16.00, 2.00);
\draw (17.00, 3.00) --(17.07, 1.93);
\draw (18.00, 3.00) --(17.07, 1.93);
\draw (18.00, 3.00) --(16.93, 2.07);
\draw [dashed]  (19.07, 2.93) --(16.00, 2.00);
\draw [dashed]  (19.93, 3.07) --(17.07, 1.93);
\draw (19.93, 3.07) --(20.00, 2.00);
\draw [dashed]  (20.93, 3.07) --(18.00, 2.00);
\draw (20.93, 3.07) --(20.00, 2.00);
\draw [dashed]  (23.00, 3.00) --(20.00, 2.00);
\draw ( 2.00, 4.00) --( 2.00, 3.00);
\draw ( 9.00, 4.00) --( 8.93, 3.07);
\draw (10.00, 4.00) --( 9.07, 2.93);
\draw (16.93, 4.07) --(17.00, 3.00);
\draw [dashed]  (18.00, 4.00) --(15.00, 3.00);
\draw (18.93, 4.07) --(18.00, 3.00);
\draw (18.93, 4.07) --(18.93, 3.07);
\draw [dashed]  (19.86, 4.14) --(17.00, 3.00);
\draw (19.86, 4.14) --(18.93, 3.07);
\draw (19.86, 4.14) --(19.93, 3.07);
\draw [dashed]  (22.00, 4.00) --(19.07, 2.93);
\draw (22.00, 4.00) --(21.07, 2.93);
\draw [dashed]  (23.07, 3.93) --(20.07, 2.93);
\draw [dashed]  (22.93, 4.07) --(20.07, 2.93);
\draw [dashed]  (22.93, 4.07) --(19.93, 3.07);
\draw (22.93, 4.07) --(22.00, 3.00);
\draw (22.93, 4.07) --(23.00, 3.00);
\draw [dashed]  (24.07, 3.93) --(21.07, 2.93);
\draw ( 2.00, 5.00) --( 2.00, 4.00);
\draw ( 9.00, 5.00) --( 8.00, 4.00);
\draw ( 9.00, 5.00) --( 9.00, 4.00);
\draw [dashed]  (11.00, 5.00) --( 8.00, 4.00);
\draw (11.00, 5.00) --(10.00, 4.00);
\draw (11.00, 5.00) --(11.00, 4.00);
\draw (17.00, 5.00) --(16.93, 4.07);
\draw (18.93, 5.07) --(19.07, 3.93);
\draw (20.00, 5.00) --(19.07, 3.93);
\draw (20.00, 5.00) --(20.00, 4.00);
\draw [dashed]  (22.07, 4.93) --(19.07, 3.93);
\draw [dashed]  (23.07, 4.93) --(20.00, 4.00);
\draw [dashed]  (22.93, 5.07) --(20.14, 3.86);
\draw (24.93, 5.07) --(23.93, 4.07);
\draw ( 2.00, 6.00) --( 2.00, 5.00);
\draw (11.00, 6.00) --(10.00, 5.00);
\draw (13.00, 6.00) --(13.00, 5.00);
\draw [dashed]  (16.00, 6.00) --(13.00, 5.00);
\draw (16.00, 6.00) --(16.07, 4.93);
\draw (17.07, 5.93) --(16.07, 4.93);
\draw (17.07, 5.93) --(15.93, 5.07);
\draw (16.93, 6.07) --(17.00, 5.00);
\draw [dashed]  (19.00, 6.00) --(16.07, 4.93);
\draw (19.00, 6.00) --(18.93, 5.07);
\draw [dashed]  (22.00, 6.00) --(18.93, 5.07);
\draw (22.00, 6.00) --(22.07, 4.93);
\draw [dashed]  (25.00, 6.00) --(22.07, 4.93);
\draw (25.00, 6.00) --(25.07, 4.93);
\draw [dashed]  (26.00, 6.00) --(22.93, 5.07);
\draw (26.00, 6.00) --(25.07, 4.93);
\draw (26.00, 6.00) --(24.93, 5.07);
\draw (26.00, 6.00) --(26.00, 5.00);
\draw [dashed]  (28.00, 6.00) --(25.07, 4.93);
\draw ( 2.00, 7.00) --( 2.00, 6.00);
\draw (12.00, 7.00) --(11.00, 6.00);
\draw (12.00, 7.00) --(12.00, 6.00);
\draw (13.00, 7.00) --(12.00, 6.00);
\draw (13.00, 7.00) --(13.00, 6.00);
\draw [dashed]  (16.00, 7.00) --(13.00, 6.00);
\draw (16.00, 7.00) --(15.00, 6.00);
\draw (16.00, 7.00) --(16.00, 6.00);
\draw (16.93, 7.07) --(16.93, 6.07);
\draw [dashed]  (18.00, 7.00) --(15.00, 6.00);
\draw (18.00, 7.00) --(17.07, 5.93);
\draw (18.00, 7.00) --(18.00, 6.00);
\draw [dashed]  (19.00, 7.00) --(16.00, 6.00);
\draw (19.00, 7.00) --(18.00, 6.00);
\draw (19.00, 7.00) --(19.00, 6.00);
\draw ( 2.00, 8.00) --( 2.00, 7.00);
\draw (17.00, 8.00) --(16.93, 7.07);
\draw (18.00, 8.00) --(17.07, 6.93);
\draw (24.93, 8.07) --(25.00, 7.00);
\draw [dashed]  (27.93, 8.07) --(25.00, 7.00);
\draw (27.93, 8.07) --(28.00, 7.00);
\draw [dashed]  (30.93, 8.07) --(28.00, 7.00);
\draw ( 2.00, 9.00) --( 2.00, 8.00);
\draw (17.00, 9.00) --(16.00, 8.00);
\draw (17.00, 9.00) --(17.00, 8.00);
\draw [dashed]  (19.00, 9.00) --(16.00, 8.00);
\draw (19.00, 9.00) --(18.00, 8.00);
\draw (19.00, 9.00) --(19.00, 8.00);
\draw (25.00, 9.00) --(24.93, 8.07);
\draw (26.93, 9.07) --(27.00, 8.00);
\draw [dashed]  (28.00, 9.00) --(24.93, 8.07);
\draw (28.00, 9.00) --(27.00, 8.00);
\draw (28.00, 9.00) --(27.93, 8.07);
\draw [dashed]  (29.93, 9.07) --(27.00, 8.00);
\draw (29.93, 9.07) --(30.00, 8.00);
\draw [dashed]  (31.00, 9.00) --(27.93, 8.07);
\draw (31.00, 9.00) --(30.00, 8.00);
\draw [dashed]  (33.00, 9.00) --(30.00, 8.00);
\draw ( 2.00,10.00) --( 2.00, 9.00);
\draw (19.00,10.00) --(18.00, 9.00);
\draw (21.00,10.00) --(21.00, 9.00);
\draw [dashed]  (24.00,10.00) --(21.00, 9.00);
\draw (24.00,10.00) --(24.07, 8.93);
\draw (25.07, 9.93) --(24.07, 8.93);
\draw (25.07, 9.93) --(23.93, 9.07);
\draw (24.93,10.07) --(25.00, 9.00);
\draw [dashed]  (27.00,10.00) --(24.07, 8.93);
\draw (27.00,10.00) --(26.93, 9.07);
\draw [dashed]  (30.00,10.00) --(26.93, 9.07);
\draw (30.00,10.00) --(29.93, 9.07);
\draw [dashed]  (33.07, 9.93) --(29.93, 9.07);
\draw ( 2.00,11.00) --( 2.00,10.00);
\draw (20.00,11.00) --(19.00,10.00);
\draw (20.00,11.00) --(20.00,10.00);
\draw (21.00,11.00) --(20.00,10.00);
\draw (21.00,11.00) --(21.00,10.00);
\draw [dashed]  (24.00,11.00) --(21.00,10.00);
\draw (24.00,11.00) --(23.00,10.00);
\draw (24.00,11.00) --(24.00,10.00);
\draw (24.93,11.07) --(24.93,10.07);
\draw [dashed]  (26.00,11.00) --(23.00,10.00);
\draw (26.00,11.00) --(25.07, 9.93);
\draw (26.00,11.00) --(26.00,10.00);
\draw [dashed]  (27.00,11.00) --(24.00,10.00);
\draw (27.00,11.00) --(26.00,10.00);
\draw (27.00,11.00) --(27.00,10.00);
\draw ( 2.00,12.00) --( 2.00,11.00);
\draw (25.00,12.00) --(24.93,11.07);
\draw (26.00,12.00) --(25.07,10.93);
\draw ( 2.00,13.00) --( 2.00,12.00);
\draw (25.00,13.00) --(24.00,12.00);
\draw (25.00,13.00) --(25.00,12.00);
\draw [dashed]  (27.00,13.00) --(24.00,12.00);
\draw (27.00,13.00) --(26.00,12.00);
\draw (27.00,13.00) --(27.00,12.00);
\draw ( 2.00,14.00) --( 2.00,13.00);
\draw (27.00,14.00) --(26.00,13.00);
\draw (29.00,14.00) --(29.00,13.00);
\draw [dashed]  (32.00,14.00) --(29.00,13.00);
\draw ( 2.00,15.00) --( 2.00,14.00);
\draw (28.00,15.00) --(27.00,14.00);
\draw (28.00,15.00) --(28.00,14.00);
\draw (29.00,15.00) --(28.00,14.00);
\draw (29.00,15.00) --(29.00,14.00);
\draw [dashed]  (32.00,15.00) --(29.00,14.00);
\draw ( 2.00,16.00) --( 2.00,15.00);
\draw ( 2.00,17.00) --( 2.00,16.00);
\draw ( 2.00,18.00) --( 2.00,17.00);
\draw ( 2.00,19.00) --( 2.00,18.00);
\draw ( 2.00,20.00) --( 2.00,19.00);
\draw ( 2.00,21.00) --( 2.00,20.00);

\end{tikzpicture}
\caption{$\Ext_{\StA}^{s,t}(H^*Z',\F_2)$ for
$0\leqslant s\leqslant 8$,
$0 \leqslant t-s \leqslant 12$}
\label{fig:Ext(Z')}
\end{figure}

\begin{figure}[ht]
\begin{tikzpicture}[scale=0.8]

\clip (-1.5,-0.5) rectangle (16.50, 8.50);
\draw[color=lightgray] (0,0) grid [step=4] (16,8);

\foreach \n in {0,4,...,16}
{
\def\nn{\n-0};
\node [below] at (\nn,0) {{\tiny$\n$}};
}

\foreach \s in {0,4,...,8}
{
\def\ss{\s-0};
\node [left] at (-0.1,\ss,0) {{\tiny$\s$}};
}

\draw [fill] ( 0.00, 0.00) circle [radius=0.05];
\node [left] at  ( 0.00, 0.00) {\ph{0}};

\draw [fill] ( 2.00, 1.00) circle [radius=0.05];
\node [left] at  ( 2.00, 1.00) {\ph{0}};

\draw [fill] ( 4.00, 2.00) circle [radius=0.05];
\node [left] at  ( 4.00, 2.00) {\ph{0}};

\draw [fill] ( 6.00, 3.00) circle [radius=0.05];
\node [left] at  ( 6.00, 3.00) {\ph{0}};

\draw [fill] ( 8.00, 4.00) circle [radius=0.05];
\node [left] at  ( 8.00, 4.00) {\ph{0}};

\draw [fill] (10.00, 5.00) circle [radius=0.05];
\node [left] at  (10.00, 5.00) {\ph{0}};

\draw [fill] (12.00, 6.00) circle [radius=0.05];
\node [left] at  (12.00, 6.00) {\ph{0}};

\draw [fill] (14.00, 7.00) circle [radius=0.05];
\node [left] at  (14.00, 7.00) {\ph{0}};

\draw [fill] (16.00, 8.00) circle [radius=0.05];
\node [left] at  (16.00, 8.00) {\ph{0}};

\draw [fill] (18.00, 9.00) circle [radius=0.05];
\node [left] at  (18.00, 9.00) {\ph{0}};

\draw [fill] (20.00,10.00) circle [radius=0.05];
\node [left] at  (20.00,10.00) {\ph{0}};

\draw [fill] (22.00,11.00) circle [radius=0.05];
\node [left] at  (22.00,11.00) {\ph{0}};

\draw [fill] (24.00,12.00) circle [radius=0.05];
\node [left] at  (24.00,12.00) {\ph{0}};

\draw [fill] (26.00,13.00) circle [radius=0.05];
\node [left] at  (26.00,13.00) {\ph{0}};

\draw [fill] (28.00,14.00) circle [radius=0.05];
\node [left] at  (28.00,14.00) {\ph{0}};

\draw [fill] (30.00,15.00) circle [radius=0.05];
\node [left] at  (30.00,15.00) {\ph{0}};

\end{tikzpicture}
\caption{$\Ext_{\StA(1)}^{s,t}(H^*Z,\F_2)$
for $0\leqslant s\leqslant 12$,
$0 \leqslant t-s \leqslant 20$}
\label{fig:A1modP1}
\end{figure}

Given two realisations of
$\StA(1)/\!/\F_2(\mathrm{P}_1^1)$
by $\kO$-modules $W_1,W_2$, consider
the Adams spectral sequence
\[
\mathrm{E}_2^{s,t} =
\Ext_{\StA(1)}^{s,t}(\StA(1)/\!/\F_2(\mathrm{P}_1^1),\StA(1)/\!/\F_2(\mathrm{P}_1^1))
\Lra \mathscr{D}_{\kO}(W_1,W_2).
\]
Then by Proposition~\ref{prop:ExtH//K},
\[
\mathrm{E}_2^{s,t}
\iso
\Ext_{\F_2(\mathrm{P}_1^1)}^{s,t}(\StA(1)/\!/\F_2(\mathrm{P}_1^1),\F_2[3]).
\]
The $\F_2(\mathrm{P}_1^1)$-module structure
of $\StA(1)/\!/\F_2(\mathrm{P}_1^1)$ has a
non-trivial $\mathrm{P}_1^1$-action linking
the generators in degrees~$0$ and~$3$.
\begin{center}
\begin{tikzpicture}
\Text[x=-2.5,y=2]{$\StA(1)/\!/\F_2(\mathrm{P}_1^1)$}
\Vertex[x=0,y=3.5,size=.1,color=black,label={\small$3$},position=left]{A3}
\Vertex[y=2.5,size=.1,color=black,label={\small$2$},position=left]{A2}
\Vertex[y=1.5,size=.1,color=black,label={\small$1$},position=left]{A1}
\Vertex[y=0.5,size=.1,color=black,label={\small$0$},position=left]{A0}
\Edge[lw=0.75pt,label={$\mathrm{P}_1^1$},position=right,bend=-45,Direct](A0)(A3)
\end{tikzpicture}
\end{center}
Therefore
\[
\StA(1)/\!/\F_2(\mathrm{P}_1^1)
\iso
\F_2(\mathrm{P}_1^1)\oplus\F_2[1]\oplus\F_2[2]
\]
and since
\[
\Ext_{\F_2(\mathrm{P}_1^1)}^{*,*}(\F_2,\F_2)
= \F_2[w]
\]
with $w$ in bidegree $(1,3)$, we have
\[
\mathrm{E}_2^{*,*} \iso
\F_2[-3] \oplus \F_2[w][-2] \oplus \F_2[w][-1].
\]
There can be no non-trivial Adams differentials,
in particular, the generator of $\F_2[-3]$ which
corresponds to the identity homomorphism can be
realised by a weak equivalence $W_1\to W_2$ of
$\kO$-modules. This shows that this $\kO$-module
is well defined up to weak equivalence and also
stably self dual.

The $\StA(1)$-module obtained by inducing up the
$\F_2(\mathrm{P}_1^1)$-module above has the form
\[
\StA(1)\otimes_{\F_2(\mathrm{P}_1^1)}\StA(1)/\!/\F_2(\mathrm{P}_1^1)
\iso
\StA(1)
\oplus \StA(1)/\!/\F_2(\mathrm{P}_1^1)[1]
\oplus \StA(1)/\!/\F_2(\mathrm{P}_1^1)[2],
\]
and this is isomorphic to $H^*(Z\wedge Z)$.

\noindent
Remark: The reader may have spotted that
the smash product $C_2\wedge C_\eta$ is
also a model for~$Z$ which is stably self
dual since $C_2$ and $C_\eta$ are. However,
we feel it worthwhile building it explicitly
to demonstrate the techniques available.
Of course we know that the actual construction
of~$Z$ is irrelevant since it is well defined
up to weak equivalence.
\end{examp}

\section{Examples based on homogeneous spaces}\label{sec:HomogenSpces}
In this section we consider some examples
built using homogeneous spaces, our goal
is to identify $\kO$-orientable manifolds
so we discuss how to determine when they
admit $\Spin$ structures. The reader will
find many interesting examples in the paper
of Douglas, Henriques and
Hill~\cite{CLD-AGH-MAH:HomObstrStrOtns},
and we will give some others. We refer
the reader to the work of Singhof \&
Wemmer~\cites{WS:ParallelHomogSpces-I,WS&WD:ParallelHomogSpces-II,WS&WD:ParallelHomogSpces-IIErratum}
for detailed results on stable frameability
of homogeneous spaces.

We were inspired to consider the question of
orientability for homogeneous spaces by work
of Atiyah \&
Smith~\cite{MFA-LS:CptLieGpsStHtpySph}*{lemma~3.1}
which gave a `local' consequence of the existence
of tangential~$\Spin$ structures for homogeneous
spaces~$G/T$ where~$T\leq G$ is a maximal torus;
this is based on a condition on the sum of the
positive roots.

\begin{prop}\label{prop:HomoGenSpce-TgtBdle}
Let $G$ be a compact connected Lie group and
$H\leq G$ a closed subgroup. Then the tangent
bundle of~$G/H$ is\/
$\mathrm{T}G/H=G\times_H\mathfrak{g}/\mathfrak{h}$
where~$H$ acts on $\mathfrak{g}/\mathfrak{h}$
by the adjoint representation.
\end{prop}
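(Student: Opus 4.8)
The plan is to realise $G/H$ as the base of the principal $H$-bundle $\pi\colon G\to G/H$ (right action of $H$ on $G$ by translation; that this is a smooth principal bundle is the standard quotient manifold theorem for closed subgroups), then to identify $\mathrm{T}(G/H)$ with the vector bundle associated to this principal bundle via the \emph{isotropy representation} of $H$ on $T_{eH}(G/H)$, and finally to identify that representation with the adjoint action on $\mathfrak{g}/\mathfrak{h}$.

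First I would pin down the fibre. Since $\pi$ is a submersion with $\pi^{-1}(eH)=H$, the differential $d\pi_e\colon\mathfrak{g}=T_eG\to T_{eH}(G/H)$ is surjective with kernel $T_eH=\mathfrak{h}$, giving a canonical isomorphism $T_{eH}(G/H)\cong\mathfrak{g}/\mathfrak{h}$. The group $G$ acts on $G/H$ on the left, transitively, with stabiliser of $eH$ equal to $H$; write $L_g\colon G/H\to G/H$ for the action of $g$. For $h\in H$ the conjugation $c_h\colon G\to G$, $g\mapsto hgh^{-1}$, satisfies $\pi\circ c_h=L_h\circ\pi$ (because $h^{-1}\in H$), and differentiating at $e$ gives $d\pi_e\circ\mathrm{Ad}(h)=dL_h|_{eH}\circ d\pi_e$. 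Since $\mathrm{Ad}(h)$ preserves $\mathfrak{h}$, it descends to $\mathfrak{g}/\mathfrak{h}$, and the displayed identity says precisely that under $T_{eH}(G/H)\cong\mathfrak{g}/\mathfrak{h}$ the isotropy representation $h\mapsto dL_h|_{eH}$ becomes the adjoint action of $H$ on $\mathfrak{g}/\mathfrak{h}$.

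Next I would build the bundle isomorphism. Define
\[
\Phi\colon G\times(\mathfrak{g}/\mathfrak{h})\longrightarrow \mathrm{T}(G/H),
\qquad
(g,v)\longmapsto dL_g|_{eH}(\bar v),
\]
where $\bar v\in T_{eH}(G/H)$ corresponds to $v$. This is smooth, and it is constant on orbits of the twisted $H$-action $(g,v)\cdot h=(gh,\mathrm{Ad}(h^{-1})v)$: using $L_{gh}=L_g\circ L_h$ on $G/H$ together with the previous paragraph, $dL_{gh}|_{eH}$ applied to the vector corresponding to $\mathrm{Ad}(h^{-1})v$ equals $dL_g|_{eH}$ applied to the vector corresponding to $v$. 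Hence $\Phi$ factors through a smooth bundle map $\bar\Phi\colon G\times_H(\mathfrak{g}/\mathfrak{h})\to\mathrm{T}(G/H)$ covering the identity of $G/H$. Transitivity of the $G$-action makes $\bar\Phi$ fibrewise surjective, and as both fibres have dimension $\dim G-\dim H$ it is a fibrewise linear isomorphism, hence an isomorphism of vector bundles. (Alternatively one may invoke the general principle that every $G$-equivariant vector bundle over $G/H$ is associated to $G\to G/H$ via its fibre over $eH$ equipped with the isotropy action, and apply it to the $G$-equivariant bundle $\mathrm{T}(G/H)$ once the fibre has been identified as above.)

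I expect the only genuine subtlety to be the well-definedness of $\Phi$ on the associated bundle, i.e.\ matching the twist $\mathrm{Ad}(h^{-1})$ against $dL_h|_{eH}$; this is exactly what the conjugation identity $\pi\circ c_h=L_h\circ\pi$ is engineered to supply, and everything else is routine bookkeeping about submersions, principal bundles, and associated bundles.
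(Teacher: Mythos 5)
The paper does not actually prove this statement: it records it as a standard fact and cites Brockett \& Sussmann and Singhof \& Wemmer. Your proposal supplies the standard textbook argument that those references contain, and it is correct. The key steps are all in order: the identification $T_{eH}(G/H)\cong\mathfrak{g}/\mathfrak{h}$ via $d\pi_e$; the commutation $\pi\circ c_h = L_h\circ\pi$ (using $h^{-1}\in H$) and its differential $d\pi_e\circ\mathrm{Ad}(h)=dL_h|_{eH}\circ d\pi_e$, which is exactly what identifies the isotropy representation with the quotient adjoint action; and the descent of $\Phi(g,v)=dL_g|_{eH}(\bar v)$ to the associated bundle, where the twist $\mathrm{Ad}(h^{-1})$ cancels against $dL_h|_{eH}$ by that same identity. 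The parenthetical alternative you mention --- that every $G$-equivariant vector bundle over $G/H$ is associated to $G\to G/H$ via its fibre at $eH$ with the isotropy action --- is the cleanest way to package the argument and is probably what the cited sources do; either way, what you wrote is a complete and correct proof of the proposition that the paper leaves to the literature.
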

\begin{proof}
This is well-known, for example see Brockett
\& Sussman~\cite{RWB&HJS:TgtBdlesHomogSpces}
or Singhof \& Wemmer
\cites{WS:ParallelHomogSpces-I,WS&WD:ParallelHomogSpces-II}.
\end{proof}

The vector bundle $G\times_H\mathfrak{g}\to G/H$
admits a trivialisation
\begin{equation}\label{eq:AdjTriv}
\xymatrix{
G\times_H\mathfrak{g}\ar[rr]^\iso\ar[dr]
&&
G/H\times\mathfrak{g}\ar[dl] \\
& G/H &  \\
[g,x]_H \ar@{<->}[rr] && (gH,\Adj(g)(x))
}
\end{equation}
which is $G$-equivariant where $G/H\times\mathfrak{g}$
is given the diagonal left $G$-action.

By choosing an $H$-invariant inner product
we can find a splitting of the adjoint
representation of~$H$ on $\mathfrak{g}$,
\[
\mathfrak{g}
\iso
\mathfrak{g}/\mathfrak{h}\oplus\mathfrak{h},
\]
and this induces a $G$-vector bundle isomorphism
\[
\Adj_{G/H}\mathfrak{g}
\iso
\Adj_{G/H}\mathfrak{g}/\mathfrak{h}\oplus\Adj_{G/H}\mathfrak{h},
\]
where
\[
\Adj_{G/H}(-) = G\times_H (-)\to G/H
\]
denotes the associated vector bundle construction.
Now combining this with Proposition~\ref{prop:HomoGenSpce-TgtBdle}
and~\eqref{eq:AdjTriv}, we obtain some useful
results.

\begin{prop}\label{prop:HomoGenSpce-TgtBdle-2}
Let $G$ be a compact connected Lie group and
$H\leq G$ a closed subgroup. Then there is
an isomorphism of $G$-vector bundles
\[
\mathrm{T}G/H \oplus \mathrm{Adj}_{G/H}\mathfrak{h}
\iso G/H\times\mathfrak{g}.
\]
Working non-equivariantly, the right hand
bundle is trivial, so $\mathrm{T}G/H$ and
$\mathrm{Adj}_{G/H}\mathfrak{h}$ are
stably inverse bundles.
\end{prop}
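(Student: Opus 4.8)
The plan is to assemble the three ingredients already established in the discussion preceding the statement, so the proof is essentially a matter of chaining known isomorphisms. First I would invoke the $G$-equivariant trivialisation~\eqref{eq:AdjTriv}, which identifies $G\times_H\mathfrak{g}$ with the product bundle $G/H\times\mathfrak{g}$ equipped with the diagonal left $G$-action; this supplies the right-hand side of the asserted isomorphism. Next I would apply the associated-bundle functor $\mathrm{Adj}_{G/H}(-)=G\times_H(-)$ to the $H$-equivariant orthogonal splitting $\mathfrak{g}\iso\mathfrak{g}/\mathfrak{h}\oplus\mathfrak{h}$ coming from a choice of $H$-invariant inner product on $\mathfrak{g}$, obtaining the direct-sum decomposition $\mathrm{Adj}_{G/H}\mathfrak{g}\iso\mathrm{Adj}_{G/H}(\mathfrak{g}/\mathfrak{h})\oplus\mathrm{Adj}_{G/H}\mathfrak{h}$ of $G$-vector bundles over $G/H$. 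Finally I would identify the first summand $\mathrm{Adj}_{G/H}(\mathfrak{g}/\mathfrak{h})=G\times_H\mathfrak{g}/\mathfrak{h}$ with $\mathrm{T}G/H$ via Proposition~\ref{prop:HomoGenSpce-TgtBdle}. Concatenating gives $\mathrm{T}G/H\oplus\mathrm{Adj}_{G/H}\mathfrak{h}\iso\mathrm{Adj}_{G/H}\mathfrak{g}\iso G/H\times\mathfrak{g}$, which is the first assertion.

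For the second assertion I would simply note that, forgetting the $G$-action, $G/H\times\mathfrak{g}$ is a trivial bundle of rank $\dim\mathfrak{g}$. Hence in $\widetilde{KO}^0(G/H)$ (or $\widetilde{K}^0(G/H)$, according to the structure group one wants) the classes of $\mathrm{T}G/H$ and $\mathrm{Adj}_{G/H}\mathfrak{h}$ add to zero; equivalently $\mathrm{T}G/H\oplus\mathrm{Adj}_{G/H}\mathfrak{h}$ is trivial, which is precisely the statement that the two bundles are stably inverse (indeed inverse without any stabilisation).

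The only point requiring a word of justification is the existence of the $H$-equivariant splitting $\mathfrak{g}\iso\mathfrak{g}/\mathfrak{h}\oplus\mathfrak{h}$, i.e.\ of an $H$-invariant linear complement to $\mathfrak{h}$ in $\mathfrak{g}$; since $H$ is compact this follows by averaging any inner product over $H$ against Haar measure, and it has already been recorded in the text. Everything else is just naturality of the associated-bundle construction with respect to direct sums together with the two identifications quoted above, so no genuinely new argument is needed — the main ``obstacle'' is merely bookkeeping the equivariant isomorphisms in the correct order.
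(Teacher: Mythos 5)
Your argument is exactly the one the paper intends: it combines the trivialisation~\eqref{eq:AdjTriv} of $G\times_H\mathfrak{g}$, the $H$-invariant splitting $\mathfrak{g}\iso\mathfrak{g}/\mathfrak{h}\oplus\mathfrak{h}$ applied through $\mathrm{Adj}_{G/H}(-)$, and Proposition~\ref{prop:HomoGenSpce-TgtBdle}, which is precisely how the text derives the result. The proof is correct and complete as written.
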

\begin{cor}\label{cor:HomoGenSpce-TgtBdle}
For a commutative ring spectrum~$E$,~$G/H$
is $E$-orientable if and only if\/
$\mathrm{Adj}_{G/H}\mathfrak{h}$ is $E$-orientable.
\end{cor}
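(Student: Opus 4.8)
The plan is to read the corollary off directly from Proposition~\ref{prop:HomoGenSpce-TgtBdle-2}, using only the standard fact that $E$-orientability of a vector bundle depends on its stable class and is additive over Whitney sums. Recall that \emph{$G/H$ is $E$-orientable} means that its stable normal bundle $\nu_{G/H}$ admits a Thom class in $E$-cohomology; since $\mathrm{T}G/H\oplus\nu_{G/H}$ is stably trivial, this is the same as $\mathrm{T}G/H$ being $E$-orientable.

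First I would record the additivity principle: for vector bundles $\xi,\eta$ over a space $X$, if any two of $\xi$, $\eta$, $\xi\oplus\eta$ admit $E$-orientations then so does the third. The constructive direction is classical: since $\mathrm{Th}(\xi\oplus\eta)\iso\mathrm{Th}(\xi)\wedge_X\mathrm{Th}(\eta)$, pulling back the external product of Thom classes along the diagonal $X\to X\times X$ produces a Thom class for $\xi\oplus\eta$ from ones for $\xi$ and $\eta$, and the trivial bundle carries its canonical Thom class (the suspension of $1\in E^0$). For the remaining direction I would pass to classifying maps: $E$-orientability of a stable bundle $\zeta$ over $X$ is equivalent to nullity of the composite $X\to B\GL_1(S)\to B\GL_1(E)$ classifying $\zeta$, and this composite is a homomorphism of the abelian groups $[X,-]$, so from $[\xi\oplus\eta]=[\xi]+[\eta]$ in $[X,B\GL_1(E)]$ any two of these classes vanishing forces the third. (Alternatively, and more elementarily, one uses fibrewise Atiyah duality over $X$: the fibrewise Thom spectra of stably inverse bundles are fibrewise Spanier--Whitehead dual, so a fibrewise $E$-orientation of one dualises to one of the other.)

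The corollary then follows immediately: by Proposition~\ref{prop:HomoGenSpce-TgtBdle-2} the Whitney sum $\mathrm{T}G/H\oplus\mathrm{Adj}_{G/H}\mathfrak{h}$ is isomorphic to the trivial bundle $G/H\times\mathfrak{g}$, which is $E$-orientable, so applying the additivity principle with $\xi=\mathrm{T}G/H$ and $\eta=\mathrm{Adj}_{G/H}\mathfrak{h}$ gives that $\mathrm{T}G/H$ is $E$-orientable if and only if $\mathrm{Adj}_{G/H}\mathfrak{h}$ is. Equivalently, one may simply observe that $\nu_{G/H}$ is stably equivalent to $\mathrm{Adj}_{G/H}\mathfrak{h}$, both being stable inverses of $\mathrm{T}G/H$. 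There is essentially no obstacle in this argument; the only point deserving any care is the justification of the additivity of $E$-orientability, and that is standard.
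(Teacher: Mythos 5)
Your argument is correct and is exactly the one the paper intends: the corollary is stated without proof as an immediate consequence of Proposition~\ref{prop:HomoGenSpce-TgtBdle-2}, the point being that $\mathrm{Adj}_{G/H}\mathfrak{h}$ is a stable inverse of $\mathrm{T}G/H$ (hence stably the normal bundle), so the two are simultaneously $E$-orientable. Your closing observation identifying $\nu_{G/H}$ with $\mathrm{Adj}_{G/H}\mathfrak{h}$ is the shortest route and matches the paper's usage of orientability in Proposition~\ref{prop:PD-Rmods}.
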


Thus the $E$-orientability of $G/H$ can be
investigated either by considering the adjoint
action of~$H$ on $\mathfrak{g}/\mathfrak{h}$
in terms of the action of~$H$ on roots of~$G$
outside of $\mathfrak{h}$, or the adjoint
representation of~$H$ on $\mathfrak{h}$.
In particular, for $\kO$-orientability
this reduces to checking whether for
a chosen maximal torus of~$H$, the Weyl
vector
\[
\rho_H =
\frac{1}{2}(\text{sum of positive roots of~$H$})
\]
is a weight (this is always true when~$H$
is simply connected). It is known that
$\rho_H$ is the sum of the fundamental
weights of~$H$.

Before giving some examples, we recall that
for a compact simply connected Lie group~$G$
with chosen maximal torus~$T$, and finite
centre $\mathrm{Z}(G)$, there are some important
relationships between the root lattice
$\Lambda_{\mathrm{rt}}(G)$, the weight lattice
$\Lambda_{\mathrm{wt}}(G)$, the centre and
the fundamental group~$\pi_1(G/\mathrm{Z}(G))$:
\begin{equation}\label{eq:relationships-1}
\Lambda_{\mathrm{wt}}(G)/\Lambda_{\mathrm{rt}}(G)
\iso
\mathrm{Z}(G)
\iso
\pi_1(G/\mathrm{Z}(G));
\end{equation}
furthermore,
\begin{equation}\label{eq:relationships-2}
\Lambda_{\mathrm{wt}}(G/\mathrm{Z}(G))
=
\Lambda_{\mathrm{rt}}(G/\mathrm{Z}(G)).
\end{equation}
So for example when $n\geq3$, this gives
\[
\Lambda_{\mathrm{wt}}(\Spin(n))/\Lambda_{\mathrm{rt}}(\Spin(n))
\iso \Z/2,
\quad
\Lambda_{\mathrm{wt}}(\SO(n))
= \Lambda_{\mathrm{rt}}(\SO(n)).
\]
It is also known that in general,
\begin{equation}\label{eq:relationships-3}
|\pi_1(G/\mathrm{Z}(G))|
=
|\mathrm{Z}(G)|
=
|\Lambda_{\mathrm{wt}}(G):\Lambda_{\mathrm{rt}}(G)|
= \text{determinant of the Cartan matrix of $G$}.
\end{equation}

Finally we recall that if $T\leq H\leq G$
where~$T$ is a maximal torus of~$G$ then the
cohomology of~$G/H$ is concentrated in even
degrees so all odd degree elements of the
Steenrod algebra act trivially.

Now we discuss some examples where we can
determine whether $\Spin$ structures exist
on the adjoint representations of some
standard Lie groups.

\begin{examp}\label{examp:U(n)}
Let $n\geq1$. Then the adjoint representation
of $\mathrm{U}(n)$ on $\mathfrak{u}(n)$ (the
space of $n\times n$ skew-Hermitian matrices)
lifts to a $\Spin$ representation if and only
if~$n$ is odd.

To see this, recall that the diagonal matrices
$\diag(z_1,\ldots,z_n)\in\mathrm{U}(n)$ form
a maximal torus $\mathrm{T}^n\leq\mathrm{U}(n)$.
Let $E^{rs}$ be the matrix with all entries~$0$
except~$1$ in the $(r,s)$ place. For each pair
$r,s$ with $1\leq r<s\leq n-1$, consider the
adjoint action on the matrix~$E^{rs}$:
\[
\diag(z_1,\ldots,z_n)E^{rs}\diag(z_1,\ldots,z_n)^{-1}
= z_rE^{rs}z_s^{-1}
\]
It follows that the roots are the
$\omega_r-\omega_s$ where $\omega_k$ is the
fundamental weight given by the projection
of~$T^n$ onto the $k$-th factor. The sum of
the positive roots is
\begin{align*}
2\rho_{\mathrm{U}(n)} =
\sum_{1\leq r<s\leq n}(\omega_r-\omega_s)
&=
\sum_{1\leq r\leq n-1} (n-r)\omega_r
-\sum_{2\leq r\leq n}(r-1)\omega_r \\
&=
-(n-1)\omega_n +
\sum_{1\leq r\leq n-1} (n-2r+1)\omega_r,
\end{align*}
which is divisible by~$2$ in the weight
lattice if and only if~$n$ is odd. It
follows that if~$G$ is a compact
connected Lie group which contains
a subgroup isomorphic to $\mathrm{U}(n)$,
~$G/\mathrm{U}(n)$ admits a~$\Spin$
structure if and only if~$n$ is odd.
\end{examp}

\begin{examp}\label{examp:G_2}
The root system for the rank~$2$ group~$\mathrm{G}_2$
is shown in Figure~\ref{fig:G2}.
\begin{figure}[ht]
\[
\begin{xy}
(0,0)="O",
(28.2842712474,0)="A1",
(-28.2842712474,0)="-A1",
(-42.4264068711,24.4948974278)="A2",
(42.4264068711,-24.4948974278)="-A2",
(-14.1421356237,24.4948974278)="A1+A2",
(14.1421356237,-24.4948974278)="-A1-A2",
(14.1421356237,24.4948974278)="2A1+A2",
(-14.1421356237,-24.4948974278)="-2A1-A2",
(42.4264068711,24.4948974278)="3A1+A2",
(-42.4264068711,-24.4948974278)="-3A1-A2",
(0,48.9897948556)="3A1+2A2",
(0,-48.9897948556)="-3A1-2A2",
\ar^>>{\ds\alpha_1} "O";"A1"
\ar@{^{*}-->}_>>{\ds-\alpha_1} "O";"-A1"
\ar_>>{\ds\alpha_2} "O";"A2"
\ar@{-->}^>>{\ds-\alpha_2} "O";"-A2"
\ar@{-->}^>>{\ds\alpha_1+\alpha_2} "O";"A1+A2"
\ar@{-->}^>>{\ds-\alpha_1-\alpha_2} "O";"-A1-A2"
\ar@{-->}_>>{\ds2\alpha_1+\alpha_2} "O";"2A1+A2"
\ar@{-->}_>>{\ds-2\alpha_1-\alpha_2} "O";"-2A1-A2"
\ar@{-->}_>>{\ds\quad3\alpha_1+\alpha_2} "O";"3A1+A2"
\ar@{-->}_>>{\ds-3\alpha_1-\alpha_2\quad} "O";"-3A1-A2"
\ar@{-->}_>>{\ds3\alpha_1+2\alpha_2} "O";"3A1+2A2"
\ar@{-->}^>>{\ds-3\alpha_1-2\alpha_2} "O";"-3A1-2A2"
\end{xy}
\]
\caption{The roots of $\mathrm{G}_2$}
\label{fig:G2}
\end{figure}
The Cartan matrix has determinant~$1$, so the
weight and root lattices agree and the centre
is trivial. One half of the sum of the positive
roots is
\[
\rho_{\mathrm{G}_2}
= \frac{1}{2}\bigl(10\alpha_1+6\alpha_2\bigr)
=  5\alpha_1+3\alpha_2,
\]
which is a weight and therefore the adjoint
representation on the Lie algebra $\mathfrak{g}_2$
lifts to~$\Spin$. So for any compact connected
Lie group~$G$ which contains a subgroup isomorphic
to $\mathrm{G}_2$,~$G/\mathrm{G}_2$ admits
a~$\Spin$ structure. One example of this is
$\mathrm{F}_4$, so the homogeneous space
$\mathrm{F}_4/\mathrm{G}_2$ of dimension~$36$
has a $\Spin$ structure. We also know that
for any maximal torus $\mathrm{T}^2\leq\mathrm{G}_2$,
$\mathrm{G}_2/\mathrm{T}^2$ has a~$\Spin$
structure.
See~\cite{CLD-AGH-MAH:HomObstrStrOtns}*{figure~4}
for the cohomology of these examples as
$\StA$-modules.
\end{examp}

\begin{examp}\label{examp:A_n}
For $n\geq1$ the Lie group $\SU(n+1)$
corresponds to the root system~$\mathrm{A}_n$
and $\mathrm{Z}(\SU(n+1))$ is cyclic of
order~$n+1$.

The positive roots have the form
$\alpha_r+\alpha_{r+1}+\cdots+\alpha_s$
($1\leq r\leq s\leq n$) and half the sum
of these is
\begin{equation}\label{eq:A_n-posrootsum}
\rho_{\SU(n+1)} =
\sum_{1\leq k\leq n}\frac{k(n+1-k)}{2}\alpha_k.
\end{equation}
When $n$ is even, $k$ and $n+1-k$ have opposite
parity, so the coefficients are all integers
so this is in the weight lattice
of~$\SU(n+1)/\mathrm{Z}(\SU(n+1))$. When~$n$
is odd, for each odd value of~$k$ the coefficient
of $\alpha_k$ is odd, so this is not in the
weight lattice of~$\SU(n+1)/\mathrm{Z}(\SU(n+1))$.
So the adjoint representation of
$\SU(n+1)/\mathrm{Z}(\SU(n+1))$ admits
a~$\Spin$ structure if and only if~$n$
is even.
\end{examp}

\begin{examp}\label{examp:B_n}
For $n\geq2$, $\mathrm{B}_n$ is the root
system of $\Spin(2n+1)$ and the centre
has order~$2$ where
$\Spin(2n+1)/\mathrm{Z}(\Spin(n+1))\iso\SO(2n+1)$.
It turns out that in the root lattice the
sum of the positive roots satisfies
\[
\rho_{\Spin(2n+1)} \equiv
\sum_{1\leq k\leq n}k\alpha_k
\mod{2},
\]
which implies that the adjoint representation
of~$\SO(2n+1)$ does not have a~$\Spin$
structure.
\end{examp}
\begin{examp}\label{examp:CnDn}
For $n\geq2$, $\mathrm{C}_n$ is the root
system of $\Sp(n)$ and the centre has
order~$2$. The adjoint representation
of~$\Sp(n)$ has a~$\Spin$ structure
if and only if $n\equiv0,2\bmod{4}$.

For $n\geq4$, $\mathrm{D}_n$ is the root
system of $\Spin(2n)$ and the centre has
order~$4$. The adjoint representation
of~$\Spin(2n)$ has a~$\Spin$ structure
if and only if $n\equiv0,1\bmod{4}$.
\end{examp}
%
%

\begin{examp}\label{examp:F_4}
The exceptional Lie group $\mathrm{F}_4$ has rank~$4$
and its Cartan matrix has determinant~$1$ so the root
and weight lattices coincide. One half of the sum of
the positive roots is the weight
\[
\rho_{\mathrm{F}_4} =
8\alpha_{1} + 15\alpha_{2} + 21\alpha_{3} + 11\alpha_{4},
\]
so the adjoint representation on the Lie algebra
$\mathfrak{f}_4$ has a~$\Spin$ structure.
\end{examp}

\begin{examp}\label{examp:E_?}
For the exceptional Lie groups $\mathrm{E}_6$,
$\mathrm{E}_7$ and $\mathrm{E}_8$, the Cartan
matrices have determinants~$3$,~$2$ and~$1$
respectively and these are the orders of their
centres. Here are the formulae for half the
sums of the positive roots:
\begin{align*}
\rho_{\mathrm{E}_6} &=
8\alpha_{1} + 11\alpha_{2} + 15\alpha_{3}
+ 21\alpha_{4} + 15\alpha_{5} + 8\alpha_{6}, \\
\rho_{\mathrm{E}_7} &=
17\alpha_{1} + \frac{49}{2}\alpha_{2}
+ 33\alpha_{3} + 48\alpha_{4}
+ \frac{75}{2}\alpha_{5} + 26\alpha_{6}
+ \frac{27}{2}\alpha_{7}, \\
\rho_{\mathrm{E}_8} &=
46\alpha_{1} + 68\alpha_{2} + 91\alpha_{3}
+ 135\alpha_{4} + 110\alpha_{5}
+ 84\alpha_{6} + 57\alpha_{7} + 29\alpha_{8}.
\end{align*}
For $\mathrm{E}_6/\mathrm{Z}(\mathrm{E}_6)$
and $\mathrm{E}_8/\mathrm{Z}(\mathrm{E}_8)$,
the adjoint representations admit~$\Spin$
structures, but
for~$\mathrm{E}_7/\mathrm{Z}(\mathrm{E}_7)$
the expression is not a weight so the adjoint
representation does not admit a~$\Spin$
structure.
\end{examp}

\begin{examp}\label{examp:SO(8)/Sp(2)}
Consider the $18$-dimensional homogeneous
space~$\SO(8)/\Sp(2)$. The cohomology is
easily determined using for example the
Eilenberg-Moore spectral sequence
\[
E^2_{s,t} = \Tor_{s,t}^{H^*(B\SO(8))}(H^*(B\Sp(4))),\F_2)
\Lra H^{t-s}(\SO(8)/\Sp(2)).
\]
Here the inclusion $\Sp(2)\hookrightarrow\SO(8)$
induces the algebra homomorphism
\[
\xymatrix{
H^*(B\SO(8))\ar[rr]\ar@{=}[d] && H^*(B\Sp(4))\ar@{=}[d] \\
\F_2[w_2,w_3,w_4,w_5,w_6,w_7,w_8] &w_{4k}\mapsto\wp_{k}& \F_2[\wp_1,\wp_2] \\
}
\]
where $\wp_k\in H^{4k}(B\Sp(4))$ is the $k$-th
symplectic Pontrjagin class. The $E^2$-term
is the exterior algebra
\[
\mathrm{E}^2_{*,*} = \Lambda(U_1,U_2,U_4,U_5,U_6)
\]
where $U_k$ has bidegree $(1,k+1)$ and is
represented in the bar construction by
the class~$[w_{k+1}]$. This spectral
sequence has Steenrod operations and
using the action on the Stiefel-Whitney
classes we find that
\[
\Sq^1U_1=U_2,\;\Sq^2U_2=U_4.
\]
The spectral sequence collapses and so
$H^*(\SO(8)/\Sp(2))$ is generated as
an algebra by the elements $u_k$
represented by~$U_k$, where
\[
u_2=u_1^2,\;u_3=u_1^3,\;u_4=u_1^4.
\]
Therefore it has a basis consisting of
the monomials
\[
u_1^au_5^bu_6^c
\quad
(0\leq a\leq 7,\;b,c\in\{0,1\})
\]
and the self dual $\StA(1)$-module structure
shown below. The $\kO$-module
$\kO\wedge(\SO(8)/\Sp(2)_+)$ is Spanier-Whitehead
stably self dual and
$H_{\kO}^*(\kO\wedge(\SO(8)/\Sp(2)_+))$ agrees
with this~$\StA(1)$-module.
\begin{center}
\begin{tikzpicture}
\Vertex[x=0,y=3.5,size=.05,color=black,position=left,distance=2mm,label={\tiny$u_1^7$}]{A7}
\Vertex[x=0,y=3.0,size=.05,color=black,position=left,distance=2mm,label={\tiny$u_1^6$}]{A6}
\Vertex[x=0,y=2.5,size=.05,color=black,position=left,distance=2mm,label={\tiny$u_1^5$}]{A5}
\Vertex[x=0,y=2.0,size=.05,color=black,position=left,distance=2mm,label={\tiny$u_1^4$}]{A4}
\Vertex[x=0,y=1.5,size=.05,color=black,position=left,distance=2mm,label={\tiny$u_1^3$}]{A3}
\Vertex[x=0,y=1.0,size=.05,color=black,position=left,distance=2mm,label={\tiny$u_1^2$}]{A2}
\Vertex[x=0,y=0.5,size=.05,color=black,position=left,distance=2mm,label={\tiny$u_1$}]{A1}
\Vertex[x=0,y=0.0,size=.05,color=black,position=left,distance=2mm,label={\tiny$1$}]{A0}
\Edge[lw=0.75pt](A5)(A6)
\Edge[lw=0.75pt,bend=45](A3)(A5)
\Edge[lw=0.75pt](A3)(A4)
\Edge[lw=0.75pt,bend=-45](A2)(A4)
\Edge[lw=0.75pt](A1)(A2)
\Vertex[x=2.5,y=6.0,size=.05,color=black,position=left,distance=2mm,label={\tiny$u_1^7u_5$}]{B7}
\Vertex[x=2.5,y=5.5,size=.05,color=black,position=left,distance=2mm,label={\tiny$u_1^6u_5$}]{B6}
\Vertex[x=2.5,y=5.0,size=.05,color=black,position=left,distance=2mm,label={\tiny$u_1^5u_5$}]{B5}
\Vertex[x=2.5,y=4.5,size=.05,color=black,position=left,distance=2mm,label={\tiny$u_1^4u_5$}]{B4}
\Vertex[x=2.5,y=4.0,size=.05,color=black,position=left,distance=2mm,label={\tiny$u_1^3u_5$}]{B3}
\Vertex[x=2.5,y=3.5,size=.05,color=black,position=left,distance=2mm,label={\tiny$u_1^2u_5$}]{B2}
\Vertex[x=2.5,y=3.0,size=.05,color=black,position=left,distance=2mm,label={\tiny$u_1u_5$}]{B1}
\Vertex[x=2.5,y=2.5,size=.05,color=black,position=left,distance=2mm,label={\tiny$u_5$}]{B0}
\Edge[lw=0.75pt](B5)(B6)
\Edge[lw=0.75pt,bend=45](B3)(B5)
\Edge[lw=0.75pt](B3)(B4)
\Edge[lw=0.75pt,bend=-45](B2)(B4)
\Edge[lw=0.75pt](B1)(B2)
\Vertex[x=5,y=6.5,size=.05,color=black,position=left,distance=2mm,label={\tiny$u_1^7u_6$}]{C7}
\Vertex[x=5,y=6.0,size=.05,color=black,position=left,distance=2mm,label={\tiny$u_1^6u_6$}]{C6}
\Vertex[x=5,y=5.5,size=.05,color=black,position=left,distance=2mm,label={\tiny$u_1^5u_6$}]{C5}
\Vertex[x=5,y=5.0,size=.05,color=black,position=left,distance=2mm,label={\tiny$u_1^4u_6$}]{C4}
\Vertex[x=5,y=4.5,size=.05,color=black,position=left,distance=2mm,label={\tiny$u_1^3u_6$}]{C3}
\Vertex[x=5,y=4.0,size=.05,color=black,position=left,distance=2mm,label={\tiny$u_1^2u_6$}]{C2}
\Vertex[x=5,y=3.5,size=.05,color=black,position=left,distance=2mm,label={\tiny$u_1u_6$}]{C1}
\Vertex[x=5,y=3.0,size=.05,color=black,position=left,distance=2mm,label={\tiny$u_6$}]{C0}
\Edge[lw=0.75pt](C5)(C6)
\Edge[lw=0.75pt,bend=45](C3)(C5)
\Edge[lw=0.75pt](C3)(C4)
\Edge[lw=0.75pt,bend=-45](C2)(C4)
\Edge[lw=0.75pt](C1)(C2)
\Vertex[x=7.5,y=9,size=.05,color=black,position=left,distance=2mm,label={\tiny$u_1^7u_5u_6$}]{D7}
\Vertex[x=7.5,y=8.5,size=.05,color=black,position=left,distance=2mm,label={\tiny$u_1^6u_5u_6$}]{D6}
\Vertex[x=7.5,y=8,size=.05,color=black,position=left,distance=2mm,label={\tiny$u_1^5u_5u_6$}]{D5}
\Vertex[x=7.5,y=7.5,size=.05,color=black,position=left,distance=2mm,label={\tiny$u_1^4u_5u_6$}]{D4}
\Vertex[x=7.5,y=7,size=.05,color=black,position=left,distance=2mm,label={\tiny$u_1^3u_5u_6$}]{D3}
\Vertex[x=7.5,y=6.5,size=.05,color=black,position=left,distance=2mm,label={\tiny$u_1^2u_5u_6$}]{D2}
\Vertex[x=7.5,y=6,size=.05,color=black,position=left,distance=2mm,label={\tiny$u_1u_5u_6$}]{D1}
\Vertex[x=7.5,y=5.5,size=.05,color=black,position=left,distance=2mm,label={\tiny$u_5u_6$}]{D0}
\Edge[lw=0.75pt](D5)(D6)
\Edge[lw=0.75pt,bend=45](D3)(D5)
\Edge[lw=0.75pt](D3)(D4)
\Edge[lw=0.75pt,bend=-45](D2)(D4)
\Edge[lw=0.75pt](D1)(D2)
\end{tikzpicture}
\end{center}
\end{examp}

\bigskip
\newpage
\begin{bibdiv}
\begin{biblist}


\bib{JFA&SBP:BSO}{article}{
   author={Adams, J. F.},
   author={Priddy, S. B.},
   title={Uniqueness of\/ $B\SO$},
   journal={Math. Proc. Camb.
   Phil. Soc.},
   volume={80},
   date={1976},
   pages={475\ndash509},
}

\bib{MFA-LS:CptLieGpsStHtpySph}{article}{
   author={Atiyah, M. F.},
   author={Smith, L.},
   title={Compact Lie groups and the
   stable homotopy of spheres},
   journal={Topology},
   volume={13},
   date={1974},
   pages={135\ndash142},
}

\bib{AB:Joker}{article}{
   author={Baker, A.},
   title={Iterated doubles of the Joker
   and their realisability},
   journal={Homology, Homotopy Appl.},
   volume={20},
   date={2018},
   pages={341\ndash360},
}

\bib{AB&AL:ASS}{article}{
   author={Baker, A.},
   author={Lazarev, A.},
   title={On the Adams spectral sequence
   for $R$-modules},
   journal={Algebr. Geom. Topol.},
   volume={1},
   date={2001},
   pages={173\ndash199},
}

\bib{AJB&JPM}{article}{
    author={Baker, A. J.},
    author={May, J. P.},
    title={Minimal atomic complexes},
   journal={Topology},
    volume={43},
      date={2004},
     pages={645\ndash665},
}

\bib{MGB-JDSJ-MEM:Relns}{article}{
   author={Barratt, M. G.},
   author={Jones, J. D. S.},
   author={Mahowald, M. E.},
   title={Relations amongst Toda brackets
   and the Kervaire invariant in dimension~$62$},
   journal={J. London Math. Soc. (2)},
   volume={30},
   date={1984},
   pages={533\ndash550},
}

\bib{5A:QuoRingsHH}{article}{
    author={Beaudry, A.},
    author={Hill, M. A.},
    author={Lawson, T.},
    author={Shi, X. D.},
    author={Zeng, M.},
    title={Quotient rings of\/ $H\F_2\wedge H\F_2$},
    date={2021},
    eprint={arXiv:2103.14707},
}

\bib{TB:pCmpctGps}{article}{
   author={Bauer, T.},
   title={$p$-compact groups as framed manifolds},
   journal={Topology},
   volume={43},
   date={2004},
   number={3},
   pages={569\ndash597},
}

\bib{TB&EKP:LoopSpcesMfds}{article}{
   author={Bauer, T.},
   author={Pedersen, E. K.},
   title={The realizability of local loop
   spaces as manifolds},
   journal={$K$-Theory},
   volume={37},
   date={2006},
   number={3},
   pages={329\ndash339},
}

%

\bib{RWB&HJS:TgtBdlesHomogSpces}{article}{
   author={Brockett, R. W.},
   author={Sussmann, H. J.},
   title={Tangent bundles of homogeneous
   spaces are homogeneous spaces},
   journal={Proc. Amer. Math. Soc.},
   volume={35},
   date={1972},
   pages={550\ndash551},
}

\bib{WB&ES:Hspacesduality}{article}{
   author={Browder, W.},
   author={Spanier, E.},
   title={$H$-spaces and duality},
   journal={Pacific J. Math.},
   volume={12},
   date={1962},
   pages={411\ndash414},
}

\bib{RRB:PostnikovkOkU}{article}{
    author={Bruner, R. R.},
    title={On the Postnikov towers for
    real and complex connective $K$-theory},
    date={2012},
    eprint={arXiv:1208.2232},
}

\bib{RRB:Picard}{article}{
   author={Bruner, R. R.},
   title={Idempotents, localizations and
   Picard groups of $\mathcal{A}(1)$-modules},
   journal={Contemp. Math.},
   volume={617},
   date={2014},
   pages={81\ndash108},
}

\bib{RRB&JPCG}{book}{
   author={Bruner, R. R.},
   author={Greenlees, J. P. C.},
   title={Connective Real $K$-theory of
   Finite Groups},
   series={Mathematical Surveys and Monographs},
   volume={169},
   publisher={Amer. Math. Soc.},
   date={2010},
}


\bib{Dold&Puppe:Duality}{article}{
   author={Dold, A.},
   author={Puppe, D.},
   title={Duality, trace, and transfer},
   conference={
      title={Proceedings of the International
      Conference on Geometric
      Topology },
      address={Warsaw},
      date={1978},
   },
   book={
      publisher={PWN, Warsaw},
   },
   date={1980},
   pages={81\ndash102},
}


\bib{CLD-AGH-MAH:HomObstrStrOtns}{article}{
   author={Douglas, C. L.},
   author={Henriques, A. G.},
   author={Hill, M. A.},
   title={Homological obstructions to
   string orientations},
   journal={Int. Math. Res. Not. IMRN},
   date={2011},
   number={18},
   pages={4074\ndash4088},
}
		
\bib{EKMM}{book}{
    author={Elmendorf, A. D.},
    author={Kriz, I.},
    author={Mandell, M. A.},
    author={May, J. P.},
    title={Rings, Modules and Algebras
    in Stable Homotopy Theory},
    journal={Math. Surv. and Monographs},
    volume={47},
    note={With an appendix by M.~Cole},
    date={1997},
}

\bib{DF&SM&HJS:FrobeniusExtns}{article}{
   author={Fischman, D.},
   author={Montgomery, S.},
   author={Schneider, H.-J.},
   title={Frobenius extensions of subalgebras
   of Hopf algebras},
   journal={Trans. Amer. Math. Soc.},
   volume={349},
   date={1997},
   pages={4857\ndash4895},
}

\bib{PG&JL&FM:VectdeWitt}{article}{
   author={Goerss, P.},
   author={Lannes, J.},
   author={Morel, F.},
   title={Vecteurs de Witt non commutatifs
   et repr\'{e}sentabilit\'{e} de
   l'homologie modulo~$p$},
   language={French},
   journal={Invent. Math.},
   volume={108},
   date={1992},
   pages={163\ndash227},
}


\bib{JEH:SymmFinDimlHA}{article}{
   author={Humphreys, J. E.},
   title={Symmetry for finite dimensional
   Hopf algebras},
   journal={Proc. Amer. Math. Soc.},
   volume={68},
   date={1978},
   number={2},
   pages={143\ndash146},
}

\bib{SOK:HZ*HZ}{article}{
   author={Kochman, S. O.},
   title={Integral cohomology operations},
   conference={
      title={Current Trends in Algebraic
      Topology, Part 1},
      address={London, Ont.},
      date={1981},
   },
   book={
      series={CMS Conf. Proc.},
      volume={2},
      publisher={Amer. Math. Soc.},
   },
   date={1982},
   pages={437\ndash478},
}


\bib{LarsonSweedlerThm}{article}{
   author={Larson, R. G.},
   author={Sweedler, M. E.},
   title={An associative orthogonal bilinear
   form for Hopf algebras},
   journal={Amer. J. Math.},
   volume={91},
   date={1969},
   pages={75\ndash94},
}

\bib{ML:AppnsFAtoHA}{article}{
   author={Lorenz, M.},
   title={Some applications of Frobenius
   algebras to Hopf algebras},
   journal={Contemp. Math.},
   volume={537},
   date={2011},
   pages={269\ndash289},
}

\bib{ML:TourRepThy}{book}{
   author={Lorenz, M.},
   title={A Tour of Representation Theory},
   series={Graduate Studies in Mathematics},
   volume={193},
   publisher={American Mathematical Society},
   date={2018},
}
		
\bib{MM:bo-res}{article}{
   author={Mahowald, M.},
   title={$b\mathrm{o}$-resolutions},
   journal={Pacific J. Math.},
   volume={92},
   date={1981},
   pages={365\ndash383},
}

\bib{MM&RJM:Sq4}{article}{
   author={Mahowald, M.},
   author={Milgram, R. J.},
   title={Operations which detect\/ $\Sq^4$
   in connective $K$-theory and their
   applications},
   journal={Quart. J. Math. Oxford Ser. (2)},
   volume={27},
   date={1976},
   pages={415\ndash432},
}

\bib{HRM:Book}{book}{
   author={Margolis, H. R.},
   title={Spectra and the Steenrod Algebra:
   Modules over the Steenrod algebra and
   the stable homotopy category},
   publisher={North-Holland},
   date={1983},
}


\bib{SM:HopfAlgActions}{book}{
   author={Montgomery, S.},
   title={Hopf Algebras and their Actions on Rings},
   series={CBMS Regional Conference Series in Mathematics},
   volume={82},
   date={1993},
}


\bib{WDN&MBZ:HAfreeness}{article}{
   author={Nichols, W. D.},
   author={Zoeller, M. B.},
   title={A Hopf algebra freeness theorem},
   journal={Amer. J. Math.},
   volume={111},
   date={1989},
   pages={381\ndash385},
}

\bib{GP:TruncatedProjSpces}{article}{
   author={Powell, G.},
   title={Truncated projective spaces, Brown-Gitler
   spectra and indecomposable $\StA(1)$-modules},
   journal={Topology Appl.},
   volume={183},
   date={2015},
   pages={45\ndash85},
}

\bib{JR:MAMS192}{article}{
   author={Rognes, J.},
   title={Galois Extensions of Structured Ring
   Spectra.
   Stably Dualizable Groups},
   journal={Mem. Amer. Math. Soc.},
   volume={192},
   date={2008},
   number={898},
}


\bib{WS:ParallelHomogSpces-I}{article}{
   author={Singhof, W.},
   title={Parallelizability of homogeneous
   spaces. I},
   journal={Math. Ann.},
   volume={260},
   date={1982},
   pages={101\ndash116},
}

\bib{WS&WD:ParallelHomogSpces-II}{article}{
   author={Singhof, W.},
   author={Wemmer, D.},
   title={Parallelizability of homogeneous
   spaces. II},
   journal={Math. Ann.},
   volume={274},
   date={1986},
   pages={157\ndash176},
}
		
\bib{WS&WD:ParallelHomogSpces-IIErratum}{article}{
   author={Singhof, W.},
   author={Wemmer, D.},
   title={Erratum: Parallelizability of
   homogeneous spaces. II},
   journal={Math. Ann.},
   volume={276},
   date={1987},
   pages={699\ndash700},
}


\bib{CTCW:A-gens}{article}{
   author={Wall, C. T. C.},
   title={Generators and relations for
   the Steenrod algebra},
   journal={Annals Math.},
   volume={72},
   date={1960},
   pages={429\ndash444},
}

\bib{GWW:RecentAdvHtpyThy}{book}{
   author={Whitehead, G. W.},
   title={Recent Advances in Homotopy Theory},
   note={Conference Board of the Mathematical
   Sciences Regional Conference Series in
   Mathematics},
   volume={5},
   publisher={American Mathematical Society},
   date={1970},
}

\bib{RMWW:Problems}{article}{
   author={Wood, R. M. W.},
   title={Problems in the Steenrod algebra},
   journal={Bull. London Math. Soc.},
   volume={30},
   date={1998},
   number={5},
   pages={449\ndash517},
}

\end{biblist}
\end{bibdiv}

\end{document}